\newtheorem{corollary}{Corollary}
\newtheorem{theorem}{Theorem}
\newtheorem{lemma}{Lemma}
\newtheorem{remark}{Remark}
\newtheorem{definition}{Definition}
\newtheorem{assumption}{Assumption}
\numberwithin{equation}{section} 
\numberwithin{lemma}{section} 
\numberwithin{theorem}{section} 
\numberwithin{definition}{section} 
\numberwithin{corollary}{section} 
\newcommand{\newL}{\tau_{\alpha}}
\newcommand{\fZeroMinusfStarOverH}{\frac{f(x_0) - f^*}{h(\epsilon, \alphaZero\gammaOne^{c+\newL}
)}}
\newcommand{\ntsAlphaZeroGammaOneCL}{N_{TS, \underline{\alphaZero \gammaOne^{c+\newL}}}}
\newcommand{\sqrtFrac}[2]{\sqrt{\frac{#1}{#2}}}
\newcommand{\gDeltaSDeltaOne}{g(\deltaS, \deltaOne)}
\newcommand{\mathO}[1]{\mathcal{O}\left( #1\right)}
\newcommand{\Z}{\mathbb{Z}} 
\newcommand{\N}{\mathbb{N}}
\newcommand{\R}{\mathbb{R}}
\newcommand{\E}{\mathbb{E}}
\renewcommand{\P}{\mathbb{P}}
\newcommand{\dotp}[2]{\langle #1, #2 \rangle}
\newcommand{\alphaLow}{\alpha_{low}}
\newcommand{\nEps}{N_{\epsilon}}
\newcommand{\nt}{N_T}
\newcommand{\nts}{N_{TS}}
\newcommand{\ns}{N_S}
\newcommand{\nuMe}{N_U}
\newcommand{\ntu}{N_{TU}}
\newcommand{\ntAlphaUpper}{N_{T, \overline{\alphaLowOne}}}
\newcommand{\nsAlphaUpper}{N_{S, \overline{\alphaLowOne}}}
\newcommand{\ntAlphaLower}{N_{T, \underline{\alphaLowOne}}}
\newcommand{\ntsAlphaLower}{N_{TS, \underline{\alphaLowOne}}}
\newcommand{\ntuAlphaLower}{N_{TU, \underline{\alphaLowOne}}}
\newcommand{\nuAlphaLower}{N_{U, \underline{\alphaLowOne}}}
\newcommand{\nsGammaAlpha}{N_{S, \underline{ \gamma_1^{c}\alphaLowOne}}}
\newcommand{\ntsGammaCAlpha}{N_{TS, \underline{ \gamma_1^c \alphaLowOne}}}
\newcommand{\nfsGammaAlpha}{N_{FS, \underline{ \gamma_1^c \alphaLowOne}}}
\newcommand{\nf}{N_F}
\newcommand{\logOneOverGammaOne}{\log(1/\gamma_1)}
\newcommand{\logGammaTwoOverGammaOne}{\log(\gamma_2/\gamma_1)}
\newcommand{\tCOne}{\newL}
\newcommand{\tCTwo}{c}
\newcommand{\expectation}[1]{\E \left[ #1 \right]}
\newcommand{\sumTk}{\sum_{k=0}^{N-1} T_k}
\newcommand{\eToMinusLambdaMinusOne}{e^{-\lambda}-1}
\newcommand{\oneMinusDeltaS}{1-\delta_S}
\newcommand{\conditionalE}[2]{\E \left[ #1 | #2 \right]}
\newcommand{\eToMinusLambdaSumTk}{e^{-\lambda \sumTk }}
\newcommand{\eToMinusLambdaSumTkNMinusTwo}{e^{-\lambda \sumTkToNMinusTwo }}
\newcommand{\exponentialMomentUpperTotal}{\left[ e^{( \eToMinusLambdaMinusOne)(\oneMinusDeltaS)}\right]^N}
\newcommand{\exponentialMomentUpper}{ e^{( \eToMinusLambdaMinusOne)(\oneMinusDeltaS)}}
\newcommand{\eToMinusLambda}[1]{e^{-\lambda #1}}
\newcommand{\conditionalP}[2]{\P \left[ #1 | #2 \right]}
\newcommand{\sumTkToNMinusTwo}{\sum_{k=0}^{N-2}T_k}
\newcommand{\SZeroDotsToXNMinusOne}{T_0, T_1, \dots, T_{N-2}, x_{N-1}}
\newcommand{\deltaOneComplicated}{-\delta_1 - (1-\delta_1) \log(1-\delta_1) }
\newcommand{\chernoffLowerExponential}{e^{-\frac{\delta_1^2}{2} (1-\delta_S) N}}
\newcommand{\nPreFactorTR}{\left[(1-\delta_S)(1-\delta_1) - 1 + \frac{c}{(c+1)^2} \right]^{-1}}
\newcommand{\betaK}{\beta_k}
\newcommand{\kStartOne}{k_{\small{start}}^{(1)}}
\newcommand{\kEndOne}{k_{\small{end}}^{(1)}}
\newcommand{\aOne}{A^{(1)}}
\newcommand{\closedInterval}[2]{\left[ #1, #2 \right]}
\newcommand{\twoCases}[4]{         \begin{cases}
             #1, & #2 \\
             #3, & #4.
        \end{cases} }
\newcommand{\closedOpenInterval}[2]{\left[ #1, #2 \right)}
\newcommand{\openClosedInterval}[2]{\left( #1, #2\right]}
\newcommand{\openInterval}[2]{\left( #1, #2\right)}
\newcommand{\mOneOne}{M_1^{(1)}}
\newcommand{\mTwoOne}{M_2^{(1)}}
\newcommand{\nOneOne}{n_1^{(1)}}
\newcommand{\nTwoOne}{n_2^{(1)}}
\newcommand{\kInStartEndOne}{k \in \openInterval{\kStartOne}{\kEndOne}}
\newcommand{\bOne}{B^{(1)}}
\newcommand{\kStartTwo}{k_{\small{start}}^{(2)}}
\newcommand{\kEndTwo}{k_{\small{end}}^{(2)}}
\newcommand{\kStartThree}{k_{\small{start}}^{(3)}}
\newcommand{\kStartI}{k_{\small{start}}^{(i)}}
\newcommand{\kStartIPlusOne}{k_{\small{start}}^{(i+1)}}
\newcommand{\kEndI}{k_{\small{end}}^{(i)}}
\newcommand{\aTwo}{A^{(2)}}
\newcommand{\mOneTwo}{M_1^{(2)}}
\newcommand{\mTwoTwo}{M_2^{(2)}}
\newcommand{\nOneTwo}{n_1^{(2)}}
\newcommand{\nTwoTwo}{n_2^{(2)}}
\newcommand{\bTwo}{B^{(2)}}
\newcommand{\kBar}{\overline{k}}
\newcommand{\onePlusTCTwo}{1+ \tCTwo}
\newcommand{\sHat}{\hat{s}}
\newcommand{\mKHat}[1]{\hat{m}_k\left( #1\right)}
\newcommand{\xK}{x_k}
\newcommand{\xKPlusOne}{x_{k+1}}
\newcommand{\sK}{s_k}
\newcommand{\normTwo}[1]{\left\lVert#1\right\rVert_2}
\newcommand{\gradFK}{\grad f(\xK)}
\newcommand{\DOneOverNMinusDTwo}{\frac{D_1}{N-D_2}}
\newcommand{\qInverseOfDOneOverNMinusDTwo}{q^{-1} \left( \DOneOverNMinusDTwo\right)}
\newcommand{\minGradient}[1]{\min_{k\leq #1} \normTwo{\gradFK}}
\newcommand{\minGradientN}{\minGradient{N}}
\newcommand{\DOneOverQPlusDTwo}{\frac{D_1}{q(\epsilon)}+D_2}
\newcommand{\logOneOverDelta}{\log\left( \frac{1}{\delta}\right)}
\newcommand{\eToMinusDThreeN}{e^{-D_3 N}}
\newcommand{\gammaOneC}{\gamma_1^c}
\newcommand{\hatNOneOne}{\hat{n}_1^{(1)}}
\newcommand{\hatNOneTwo}{\hat{n}_2^{(1)}}
\newcommand{\fK}{f(x_k)}
\newcommand{\innerProduct}[2]{\langle #1, #2 \rangle}
\newcommand{\sKGradFK}{S_k \gradFK}
\newcommand{\sKBKSKT}{S_k B_k S_k^T}
\newcommand{\rLTimesD}{\R^{l\times d}}
\newcommand{\fKPlusOne}{f(x_k + s_k)}
\newcommand{\epS}{\epsilon_S}
\newcommand{\sMax}{S_{\small{max}}}
\newcommand{\alphaMax}{\alpha_{\small{max}}}
\newcommand{\alphaK}{\alpha_k}
\newcommand{\hBar}{\bar{h}}
\newcommand{\deltaSOne}{\delta_S^{(1)}}
\newcommand{\deltaSTwo}{\delta_S^{(2)}}
\newcommand{\aKOne}{A_k^{(1)}}
\newcommand{\aKTwo}{A_k^{(2)}}
\newcommand{\intersect}{\cap}
\newcommand{\barXK}{\bar{x}_k}
\newcommand{\probabilityGivenXK}[1]{\probability{#1 | x_k = \barXK}}
\newcommand{\aKOneIntersectaKTwo}{\aKOne \intersect \aKTwo}
\renewcommand{\complement}[1]{\left( #1\right)^c}
\newcommand{\oneMiusEpsSToHalf}{\left( 1-\epS\right)^{1/2}}
\renewcommand{\abs}[1]{\lvert #1\rvert}
\newcommand{\alphaKPlusOne}{\alpha_{k+1}}
\newcommand{\gammaOne}{\gamma_1}
\newcommand{\gammaTwo}{\gamma_2}
\newcommand{\alphaZero}{\alpha_0}
\newcommand{\alphaMin}{\alpha_{\small{min}}}
\newcommand{\ceil}[1]{\left \lceil{#1}\right \rceil}
\newcommand{\logBaseGammaOne}[1]{\log_{\gammaOne}\left( #1\right)}
\newcommand{\minMe}[2]{\min \set{#1, #2}}
\newcommand{\alphaLowOne}{\alphaMin}
\newcommand{\pOneDef}{\frac{\log(\gammaTwo\alphaMin/\alphaZero)}{\logGammaTwoOverGammaOne}}
\newcommand{\gammaTwoExpression}{ \frac{1}{\gamma_1^c}}
\newcommand{\betaKPlusOne}{\beta_{k+1}}
\newcommand{\set}[1]{\left\{ #1 \right\}}
\newcommand{\probability}[1]{\P \left( #1 \right)}
\renewcommand{\complement}[1]{\left( #1 \right)^c}
\newcommand{\squareBracket}[1]{\left[ #1 \right]}
\newcommand{\bracket}[1]{\left( #1\right)}
\newcommand{\mkHatSHat}{\mKHat{\sHat}}
\newcommand{\oneOverTwoAlphaK}{\frac{1}{2\alphaK}}
\newcommand{\oneOverAlphaK}{\frac{1}{\alphaK}}
\newcommand{\SKTransposed}{S_k^T}
\newcommand{\BMax}{B_{max}}
\newcommand{\sKHat}{\hat{s}_k}
\newcommand{\kappaT}{\kappa_T}
\newcommand{\barH}[2]{\bar{h} \bracket{#1,#2}}
\newcommand{\halfBMax}{\frac{1}{2}\BMax}
\newcommand{\SKTransposedsKHat}{S_k^T \sKHat}
\newcommand{\lPlusHalfBmax}{L+\halfBMax}
\newcommand{\deltaOne}{\delta_1}
\newcommand{\eToQRySqaured}{e^{q \normTwo{Ry}^2}}
\newcommand{\eToQLOneMinusEps}{e^{ql(1-\epS)}}
\newcommand{\eToMinusQLOneMinusEps}{e^{-ql(1-\epS)}}
\newcommand{\texteq}[1]{\text{\quad #1}}
\newcommand{\aZeroOne}{A_0^{(1)}}
\newcommand{\aZeroTwo}{A_0^{(2)}}
\newcommand{\normInf}[1]{\|#1\|_{\infty}}
\newcommand{\deq}{\mathrel{\mathop:}=}
\newcommand{\logOneOverDeltaSTwo}{\log\bracket{1/\deltaSTwo}}
\newcommand{\mkHatSkHat}{\hat{m}_k\bracket{\hat{s}_k}}
\newcommand{\gK}{g_k}
\newcommand{\mK}[1]{m_k \bracket{#1}}
\newcommand{\deltaS}{\delta_S}
\newcommand{\integral}[3]{\int_{#1}^{#2} #3}
\newcommand{\probNEpsGrN}{\probability{\nEps >M}dM}
\newcommand{\CorTwoDeltaOne}{Suppose \eqref{eqn::deltaSConditionThmTwo} hold and let $\delta_1 \in (0,1)$ satisfy \eqref{eqn:tmp32}}
\newcommand{\IterKTrueandSuccssfulWithAlphaKGeqAlphaMin}{\substack{ \text{Iteration $k$ is true and successful} \\ \text{with $\alphaK \geq \alphaZero \gammaOne^{c+\newL}$}}}
\newcommand{\singleQuote}[1]{\lq #1\rq}
\numberwithin{equation}{section} 
\begin{document}

\title{Randomised subspace methods  for non-convex optimization, with applications to nonlinear least-squares}
\author{Coralia Cartis\thanks{Mathematical Institute, University of Oxford, Radcliffe Observatory Quarter, Woodstock Road, Oxford, OX2 6GG and the Alan Turing Institute for Data Science, British Library, NW1 2DB, London, UK; \texttt{cartis@maths.ox.ac.uk}. This author's work was supported by the Alan Turing Institute through the Turing Project scheme, by the Hong Kong Innovation and Technology Commission (InnoHK Project CIMDA) and was undertaken in collaboration with the Numerical Algorithms Group Ltd.}, 
Jaroslav Fowkes\thanks{Mathematical Institute, University of Oxford, Radcliffe Observatory Quarter, Woodstock Road, Oxford, OX2 6GG  and STFC Rutherford Appleton Laboratory, Chilton, Oxfordshire OX11 0QX, UK; 
\texttt{jaroslav.fowkes@stfc.ac.uk, @maths.ox.ac.uk}. This author's work was supported by the Oxford-Emirates Data Science Lab.}
\,\,and Zhen Shao\thanks{Mathematical Institute, University of Oxford, Radcliffe Observatory Quarter, Woodstock Road, Oxford, OX2 6GG, UK; \texttt{zhen.shao@maths.ox.ac.uk}. This author's work was supported by the EPSRC Centre For Doctoral Training in Industrially Focused Mathematical Modelling (EP/L015803/1) in collaboration with the Numerical Algorithms Group Ltd and by the EPSRC Doctoral Prize (EP/T517811/1(D4T00230)).}
}
\maketitle


 \begin{abstract}
We propose a general random subspace framework for unconstrained nonconvex optimization problems that requires a weak probabilistic assumption on the subspace gradient, which we show to be satisfied by various random matrix ensembles, such as Gaussian and sparse sketching, using Johnson-Lindenstrauss embedding properties. We show that, when safeguarded with trust region or quadratic regularization, this random subspace approach satisfies, with  high probability, a complexity bound of order $\mathcal{O}(\epsilon^{-2})$ to drive the (full) gradient below $\epsilon$; matching in the accuracy order, deterministic counterparts of these methods and securing almost sure convergence. Furthermore, no problem dimension dependence appears explicitly in the projection size of the sketching  matrix, allowing the choice of low-dimensional subspaces. We particularise this framework to Random Subspace Gauss-Newton (RS-GN) methods for nonlinear least squares problems, that only require the calculation of the Jacobian in the subspace; with similar complexity guarantees.  Numerical experiments with RS-GN on CUTEst nonlinear least squares are also presented, with some encouraging results. 

 \end{abstract}
\section{Introduction}
We investigate  the unconstrained optimization problem
\begin{equation}
    f^* = \min_{x \in \R^d} f(x),\label{eqn::fStar} 
\end{equation}
where $f:\mathbb{R}^d\rightarrow \mathbb{R}$ is a continuously differentiable and possibly nonconvex objective and where $d$ is  large. Considering the ever increasing scale of optimization problems, driven particularly by machine learning applications, we are interested in reducing the dimensionality of the parameter space by developing (random) subspace variants of classical algorithms. The two-fold advantages of such approaches are: the reduced cost of calculating problem derivatives as only their subspace projections  are needed; and of solving the subproblem given its much-reduced size.
Random block-coordinate descent methods are the simplest illustration of such an approach as they operate in a coordinate-aligned subspace, and only require a subset of partial derivatives to calculate an approximate gradient direction \cite{Nesterov12, richtarik2014iteration}. Despite documented failures of (deterministic) coordinate techniques on some problems \cite{Powell1973}, the challenges of large-scale applications have brought these methods to the forefront of research in the last decade; see 
\cite{Wright2015} for a survey of this active area, particularly for convex optimization.
Developments of these methods for nonconvex optimization can be found, for example, in \cite{Raza2013, Patrascu2015, lu2017randomized} and more recently, \cite{Yang2020}; with extensions to constrained problems \cite{Birgin1, Birgin2}, and distributed strategies \cite{Facchinei, facchinei2015parallel}.

Subspace methods can be seen as an extension of (block) coordinate methods by allowing the reduced variables
to vary in (possibly randomly chosen) subspaces that are not necessarily aligned with
coordinate directions. (Deterministic) subspace and decomposition methods have been of steadfast interest in the optimization community for decades. In particular, Krylov subspace methods can be applied to calculate -- at typically lower computational cost and in a matrix free way -- an approximate Newton-type search direction over increasing and nested subspaces; see \cite{Nocedal:2006uv}  for Newton-CG techniques (and further references) and \cite{GLTR, conn2000trust} for initial trust-region variants.  However, these methods still require that the full gradient vector is calculated/available at each iterations, as well as (full) Hessian matrix actions. The challenges of large scale calculations prompt us to go further, by imposing that only inexact gradient and Hessian action information is available, such as their projections to lower dimensional subspaces.
Thus, in these frameworks (which subsume block-coordinate methods), both the problem information and the search direction are inexact and low(er) dimensional. Deterministic proposals can be found in \cite{Yuan}, which also traces a history of these approaches. 

Random subspace approaches often rely on so-called {\it sketching} techniques \cite{10.1561/0400000060, 10.1561/2200000035}, in an attempt to exploit the benefits of
Johnson-Lindenstrauss (JL) Lemma-like results \cite{Johnson:1984aa}, 
that essentially reduce the dimension of the optimization problem 
without loss of information.  Both first-order and second-order variants have been proposed, particularly for convex and convex-like problems, that only calculate a (random) lower-dimensional projection of the gradient or Hessian. Sketched gradient descent methods have been proposed in 
\cite{Kozak_published, kozak2019stochastic, Gower2020, Grishchenko2021}.
 The sketched Newton algorithm \cite{pilanci2017newton} requires a sketching matrix 
 that is proportional to the rank of the Hessian, 
 which may be too computationally expensive if the Hessian has high rank.
 By contrast, sketched online Newton \cite{luo2016efficient}  
 uses streaming sketches to scale up a second-order method, 
 comparable to Gauss–Newton, for solving online learning problems.
The randomised subspace Newton \cite{gower2019rsn} efficiently sketches 
the full Newton direction for a family of generalised linear models, 
such as logistic regression.
The stochastic dual Newton ascent algorithm in  \cite{qu2016sdna} 
requires a positive definite upper bound $M$ on the Hessian and 
proceeds by selecting random principal submatrices of $M$ 
that are then used to form and solve an approximate Newton system. 
The randomized block cubic Newton method in \cite{doikov2018randomized} 
combines the ideas of randomized coordinate descent with 
cubic regularization and requires the optimization problem to be block separable. Concomitant with our work \cite{Zhen-PhD, zhen:icml_BCGN}, \cite{Gower2022} proposes a sketched Newton method for the solution of (square or rectangular) nonlinear systems of equations, with a global convergence guarantee.   Improvements to the solution of large-scale least squares problems are given in  \cite{lacotte2019faster, ergen2019, lacotte2020optimal, kahale2020leastsquares}. 
Random subspace methods have also been studied for the global optimization of nonconvex function, especially in the presence of low effective dimensionality of the objective, when the latter is only varying in a fixed but unknown low-dimensional subspace; see \cite{adilet3} and the references therein. This special structure assumption has also been investigated in the context of local optimization \cite{subramanian}, but it is beyond our scope here.  

Sketching can also be applied not only to reduce the dimension of the parameter/variable domain, but also the data/observations when minimizing an objective given as a  sum of (many smooth) functions, as it is common when training machine learning systems \cite{Curtis} or in data fitting/regression problems. Then, using sketching, we subsample some of these constituent functions and calculate a local improvement for this reduced objective; this leads to stochastic gradient-type methods and related variants, namely, stochastic algorithms\footnote{Note that in the case of random subspace methods, the objective is evaluated accurately as opposed to observational sketching, where this accuracy is lost through subsampling.}. A vast literature is available on this topic (see for example, \cite{Gower:2016up, Berahas:2019wr, doi:10.1287/ijoo.2019.0043, 10.5555/3305381.3305577, MR4163540, 2019arXiv190403342W, Roosta-Khorasani2019}) but not directly relevant to the developments in this paper.


A more general random framework that allows inexact gradient, Hessian and even function values involves {\it probabilistic models} \cite{MR3245880}, where the local model at each iteration is only sufficiently accurate with a certain probability. Such local models can be derivative-based \cite{Cartis:2017fa, doi:10.1287/ijoo.2019.0016, Gratton:2017kz} or derivative-free \cite{MR3245880, doi:10.1287/ijoo.2019.0016, MR3800867, Gratton:2017kz, 2019arXiv190403342W}, constructed from (exact or inexact) function evaluations only. Our results fit into this framework but use significantly milder assumptions on the model construction than in probabilistic models. There have been several follow ups and other approaches in the context of derivative-free optimization, see \cite{Lindon22} and references therein for a detailed survey\footnote{Since our focus here is on derivative-based methods, we are not surveying in detail the derivative-free optimization advances and direct the reader to \cite{Lindon22}.}. 
The results we present here, due to their generality and mild assumptions, have already been applied (using earlier drafts of this manuscript and  \cite{zhen:icml_BCGN, Zhen-PhD}) to derivative-free optimization, such as in \cite{Lindon22}.




\paragraph{Summary of contributions} The theoretical contributions of our paper are two fold\footnote{A brief description, without proofs, of a subset of the results of this paper has appeared as part of a four-page conference proceedings paper (without any supplementary materials) in the ICML Workshop “Beyond first order methods in ML systems” (2020), see \cite{zhen:icml_BCGN}. We also note that a substantial part of this paper has been included as Chapter 4 of the doctoral thesis \cite{Zhen-PhD}.} Firstly, we extend the
derivative-based probabilistic models algorithmic framework \cite{Cartis:2017fa, Gratton:2017kz} to allow a more diverse set of algorithm parameter choices and to obtain a 
high probability global rate of convergence of the form \eqref{eqn:tmp33}
 (rather than an almost-sure convergence result in expectation), which is a more precise complexity result and needed for our subsequent developments. Then, within this framework -- but under much weaker assumptions than for standard probabilistic models (see Remark \ref{Prob_Model_remark}) -- we develop a generic random-subspace framework based on sketching techniques (see Definition \ref{def::true_iters}). 
The latter conditions are similar to (some of the conditions in) \cite{Kozak_published, kozak2019stochastic} (and were discovered independently of the latter works); however, our framework is more general, aims to solve nonconvex problems, and allows several algorithm variants (first- and second-order, adaptive) and sketching matrices to be used. 

In particular, using Johnson-Lindenstrauss (JL) embedding properties of the random matrices employed to sketch/construct the projected random subspace,  we show that random subspace methods, with trust region or quadratic regularization strategies (or other),
    have a global worst-case complexity of order $\mathO{\epsilon^{-2}}$ to drive the gradient $\nabla f(x_k)$ of $f$ below the desired accuracy $\epsilon$, with exponentially high probability; this complexity bound matches in  the order of the accuracy that of corresponding deterministic/full dimensional variants of these same methods. The choice of the random subspace only needs to project approximatively correctly the length of the full gradient vector; a mild requirement that can be achieved by several sketching matrices such as scaled Gaussian matrices, and some sparse embeddings. In these cases, the same embedding properties provide that the dimension of the projected subspace is {\it independent} of the ambient dimension and so the algorithm can operate in a small dimensional subspace at each iteration, making the projected step and gradient much less expensive to compute. The choice of the random ensemble may bring some dimension dependence in the bound, which may be eliminated if $d$ is proportional to $l$, the size of the sketching subspace.  
    We also show that in the case of sampling sketching matrices, when our approach reduces to  randomised block-coordinates,
    the success of the algorithm on non-convex smooth problems depends on the non-uniformity of the gradient; an intuitive connection that captures the fact that if the gradient has some components that are significantly more important than others, and if these components are missed by the uniform sampling strategy, then too much information is lost and convergence may fail. Thus almost sure convergence of randomized block-coordinate methods can be secured under additional problem assumptions (which are not needed in the case of Gaussian or other JL-embedding matrices).
    
    We particularize our general sketching framework to global safeguarding strategies such as trust region and quadratic regularization that ensure (almost sure) convergence from any starting point;  
    as well as to local models that use approximate second-order information as in the case of Gauss-Newton type methods for nonlinear least squares. Our random-subspace Gauss-Newton methods for 
nonlinear least-squares problems 
only need a sketch of the Jacobian matrix in the variable domain at each iteration, 
which it then uses to solve a reduced linear least-squares problem for 
the step calculation.  Finally, we illustrate our theoretical findings numerically, using random-subspace Gauss-Newton variants with 
three sketching matrix ensembles,
 on some CUTEst subproblems.

The structure of the paper is as follows. Section 2 presents a variant of the algorithmic framework of probabilistic models, and extends its theory to obtain a high probability complexity bound under very general assumptions. Section 3 particularizes this framework to the case when the probabilistic  local model is calculated in a random subspace and provides general conditions under which such a framework converges almost surely and with proven complexity bound. Section 4 adds the remaining ingredients (namely, quadratic regularization and trust region) for devising complete algorithms, complete with ensuing convergence guarantees. Finally, Section 6 considers nonlinear least squares problems and further specifies the sketched local model in this case by using a Gauss-Newton model with a sketched Jacobian matrix; numerical results are also presented.

\section{A general algorithmic framework with random models}

\subsection{A generic algorithmic framework and some assumptions}

We first describe a generic algorithmic framework that is similar to \cite{Cartis:2017fa} and that encompasses the
main components of the unconstrained optimization schemes we analyse
in this paper. Some of the key assumptions required in our analysis also resemble the set up in \cite{Cartis:2017fa}. 
Despite these similarities, our analysis and results are different and qualitatively improve upon those in \cite{Cartis:2017fa}\footnote{This is in the sense  that, for example, \autoref{thm2} implies the main result [Theorem 2.1] in \cite{Cartis:2017fa}.}. 

The scheme relies on building a local, reduced model of the objective
function at each iteration, 
minimizing this model or reducing it in a sufficient manner and
considering the step which is dependent on a stepsize parameter and
which provides the model reduction (the stepsize parameter may be
present in the model or independent of it). 
This step determines a new candidate point. The function value is then
computed (accurately) at the new candidate point. 
If the function reduction provided by the candidate point is deemed
sufficient, then the iteration is declared successful, the 
candidate point becomes the new iterate and the step size parameter is
increased. Otherwise, the iteration is 
unsuccessful, the iterate is not updated and the step size parameter is reduced.

We summarize the main steps of the generic framework below\footnote{Throughout the paper, we let $\N^+=\N\setminus \{0\}$ denote the set of positive natural numbers.}.

\begin{algorithm}[H]
\begin{description}
\item[Initialization] \ \\
Choose a class of  (possibly random) models $m_k\left(w_k(\sHat)\right) = \mKHat{\sHat}$, where $\sHat \in \R^l$ with $l\leq d$ is the step parameter and $w_k: \R^l \to \R^d$ is the prolongation function. 
Choose constants $\gamma_1\in (0,1)$, $\gamma_2 = \gammaOne^{-c}$, for some $c \in \N^+$,  $\theta \in (0,1)$ and $\alpha_{\max}>0$.
Initialize the algorithm by setting $x_0 \in \R^d$, $\alpha_0 = \alphaMax \gamma_1^p$ for some $p \in \N^+$ and $k=0$.

 \item[1. Compute a reduced model and a step] \ \\
Compute a local (possibly random) reduced model $\mKHat{\sHat}$ of $f$ around $x_k$ with $\mKHat{0} = f(x_k)$. \\
Compute a step parameter $\sHat_k(\alpha_k)$, where the parameter $\alpha_k$ is present in the reduced model or the step parameter computation.\\
Compute a potential step $s_k = w_k(\sHat_k)$.

\item[2. Check sufficient decrease]\ \\  
Compute $f(x_k + s_k)$ and check if sufficient decrease (parameterized by $\theta$) is achieved in $f$ with respect to $\mKHat{0} - \mKHat{\hat{\sK}(\alpha_k)}$.

\item[3, Update the parameter $\alphaK$ and possibly take the trial step $\sK$]\ \\
If sufficient decrease is achieved, set $\xKPlusOne = \xK + \sK$ and $\alphaKPlusOne = \min \set{\alphaMax, \gammaTwo\alphaK}$ [successful iteration].
Otherwise set $\xKPlusOne = \xK$ and $\alphaKPlusOne = \gammaOne \alphaK$ [unsuccessful iteration].\\
Increase the iteration count by setting $k=k+1$ in both cases. 

\caption{\bf{Generic optimization framework based on  randomly generated reduced models.}} \label{alg:generic} 

\end{description}
\end{algorithm}

We extend the framework in \cite{Cartis:2017fa} so that the proportionality constants for increasing/decreasing the step size parameter are not required to be strictly reciprocal, but may differ up to an integer power (see \autoref{AA3}). Another difference is that though not explicitly stated, the local model $\mK{x}$ in \cite{Cartis:2017fa} seems to assume that the variable $x$ has the same dimension as the parameter of the objective function $f$. This can be seen in the definition of true iterations in \cite{Cartis:2017fa} which assumes that the gradient of the model has the same dimension as the gradient of the function $f$, as well as in all the main cases/examples given there.
 By contrast, our framework \autoref{alg:generic} explicitly states that the model does not need to have the same dimension as the objective function; with the two being connected by a step transformation function $w_k: \R^l \to \R^d$ which typically here will have $l<d$.

As an example, note that letting $l=d$ and $w_k$ be the identity function in \autoref{alg:generic} leads to usual, full dimensional local models 
which coupled with typical strategies of linesearch and trust-region as parametrised by $\alpha_k$ or regularization  (given by $1/\alpha_k$),
recover classical, deterministic variants of corresponding methods; see \cite{Cartis:2017fa} for more details.

Since the local model is (possibly) random, $\xK, \sK, \alphaK$ are in general random variables; we will use $\barXK, \bar{s}_k, \bar{\alpha}_k$ to denote their realizations.  Given  (any) $\epsilon >0$, we define convergence in terms of the random variable
\begin{equation}
    \nEps:= \inf \{M: \min_{k\leq M} \|\grad f(x_k)\|_2 \leq \epsilon \}, \label{eqn::nEps}
\end{equation}
which represents the first time that the (true/unseen) gradient descends below $\epsilon$. Note that this is the same definition as in \cite{Cartis:2017fa} and could equivalently be defined as $\nEps = \min \set{k: \normTwo{\gradFK}\leq \epsilon}$; we use the above choice for reasons of generality. Also note that $k< \nEps$ implies $\normTwo{\grad f(x_k)} > \epsilon$, which will be used repeatedly in our proofs.




Let us suppose that there is a subset of iterations, which we refer to as \textbf{true iterations} such that \autoref{alg:generic} satisfies the following assumptions. The first assumption states that given the current iterate, an iteration $k$ is true at least with a fixed probability, and is independent of the truth value of all past iterations.

\begin{assumption}\label{AA2}
There exists $\delta_S \in (0,1)$ such that for any $\barXK \in \R^d$ and $k=1, 2, \dots$
\begin{equation}
    \probabilityGivenXK{T_k} \geq 1-\delta_S, \notag
\end{equation}
where $T_k$ is defined as
        \begin{equation}
            T_k = 
        \twoCases{1}{\text{if iteration $k$ is true}}{0}{\text{otherwise}}
        \label{eqn::t_k}
        \end{equation}

Moreover, $\probability{T_0} \geq 1-\delta_S$; and $T_k$ is conditionally independent of $T_0, T_1, \dots, T_{k-1}$ given $x_k = \barXK$. 
\end{assumption}

The next assumption says that for $\alphaK$ small enough, any true iteration before convergence is guaranteed to be successful.

\begin{assumption}\label{AA3}
For any $\epsilon >0$, there exists an iteration-independent constant $\alphaLow>0$ (that may depend on $\epsilon$ as well as problem and algorithm parameters) such that 
if iteration $k$ is true, $k< \nEps$, and $\alpha_k \leq \alphaLow$, then iteration $k$ is successful. 

\end{assumption}

The next assumption says that before convergence, true and successful iterations result in an objective decrease bounded below by an (iteration-independent) function $h$, which is monotonically increasing in its two arguments, $\epsilon$ and $\alphaK$. 

\begin{assumption}\label{AA4}
There exists a non-negative, non-decreasing function $h(z_1,z_2)$ such that,
for any $\epsilon>0$, 
if iteration $k$ is true and successful with $k < \nEps$, then 

	\begin{equation}
	f(\xK) - f(\xK + \sK) \geq h(\epsilon, \alpha_k), \label{eqn::generic_model_decrease}
	\end{equation}
where $s_k$ is computed in Step 1 of \autoref{alg:generic}. Moreover, $h(z_1, z_2)>0$ if both $z_1>0$ and $z_2>0$.
\end{assumption}

The final assumption requires that the function values are monotonically decreasing throughout the algorithm. 

\begin{assumption}\label{AA5}
For any $k \in \N$, we have 
\begin{equation}
    f(\xK) \geq f(\xKPlusOne). \label{eqn::fKNonIncreasing}
\end{equation}
\end{assumption}

The following Lemma is a simple consequence of \autoref{AA3}. 

\begin{lemma}
\label{lem::alphaMin}
Let $\epsilon>0$ and \autoref{AA3} hold with $\alphaLow>0$. Then there exists $\newL \in \N^+$, and $\alphaMin >0$ such that 
\begin{align}
&\newL = \ceil{\logBaseGammaOne{ \minMe{\frac{\alphaLow}{\alphaZero}}{\frac{1}{\gammaTwo}}}}, \label{eqn:tauLDef}\\
&\alphaMin = \alphaZero \gammaOne^\newL, \label{eqn::alphaMin}\\
    &\alphaMin \leq \alphaLow, \notag \\
    &\alphaMin \leq \frac{\alphaZero}{\gammaTwo}, \label{eqn::alphaMinUpperByGammaTwoOverAlphaZero}
\end{align}
where $\gammaOne, \gammaTwo, \alphaZero$ are defined in \autoref{alg:generic}. 
\end{lemma}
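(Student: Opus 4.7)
The plan is to treat this as a direct construction and verification. Everything follows from the definition of $\newL$ once we show the quantity inside the ceiling is positive, which is what guarantees $\newL \in \N^+$.

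First, I would observe the signs of the basic quantities. Since $\gammaOne \in (0,1)$ and $c \in \N^+$ by Algorithm~\ref{alg:generic}, we have $1/\gammaTwo = \gammaOne^c \in (0,1)$, so in particular
\[
\minMe{\frac{\alphaLow}{\alphaZero}}{\frac{1}{\gammaTwo}} \leq \frac{1}{\gammaTwo} < 1,
\]
and this quantity is strictly positive because $\alphaLow>0$ (from \autoref{AA3}) and $\alphaZero = \alphaMax\gammaOne^p>0$. Because $\logBaseGammaOne{\cdot}$ is strictly decreasing on $(0,\infty)$ (as $\gammaOne<1$) and vanishes at $1$, any argument in $(0,1)$ gives a strictly positive logarithm. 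Therefore
\[
\logBaseGammaOne{\minMe{\frac{\alphaLow}{\alphaZero}}{\frac{1}{\gammaTwo}}} > 0,
\]
and taking the ceiling produces a positive integer. Defining $\newL$ by \eqref{eqn:tauLDef} thus yields $\newL \in \N^+$, and setting $\alphaMin \deq \alphaZero\gammaOne^{\newL}$ gives a well-defined positive quantity.

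Next I would verify the two upper bounds on $\alphaMin$. By the ceiling,
\[
\newL \;\geq\; \logBaseGammaOne{\minMe{\frac{\alphaLow}{\alphaZero}}{\frac{1}{\gammaTwo}}}.
\]
Applying $\gammaOne^{(\cdot)}$ to both sides (which reverses the inequality, since $\gammaOne<1$) gives
\[
\gammaOne^{\newL} \;\leq\; \minMe{\frac{\alphaLow}{\alphaZero}}{\frac{1}{\gammaTwo}} \;\leq\; \frac{\alphaLow}{\alphaZero},
\qquad
\gammaOne^{\newL} \;\leq\; \frac{1}{\gammaTwo}.
\]
Multiplying through by $\alphaZero$ yields $\alphaMin = \alphaZero\gammaOne^{\newL} \leq \alphaLow$ and $\alphaMin \leq \alphaZero/\gammaTwo$, which are exactly the two required inequalities.

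The only subtlety, and what I would flag as the main (mild) obstacle, is keeping the direction of inequalities correct when passing between $\log_{\gammaOne}$ and $\gammaOne^{(\cdot)}$, because $\gammaOne<1$ makes both of these functions decreasing; once that is handled carefully, the verification is mechanical. No assumption beyond \autoref{AA3} and the parameter conventions of Algorithm~\ref{alg:generic} is needed.
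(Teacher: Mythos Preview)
Your proof is correct and follows essentially the same direct verification as the paper: use $\newL \geq \logBaseGammaOne{\min\{\alphaLow/\alphaZero,\,1/\gammaTwo\}}$ together with the fact that $\gammaOne^{(\cdot)}$ is decreasing to obtain the two bounds on $\alphaMin$. The only small difference is in how $\newL\in\N^+$ is established: you argue that the argument of the ceiling is strictly positive (since $\min\{\alphaLow/\alphaZero,1/\gammaTwo\}<1$ forces $\logBaseGammaOne{\cdot}>0$), whereas the paper instead observes that $\alphaMin\leq \alphaZero/\gammaTwo=\alphaZero\gammaOne^{c}$ implies $\newL\geq c\geq 1$; both are equally valid and equally short.
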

\begin{proof}
We have that $\alphaMin \leq \alphaZero \gammaOne^{\logBaseGammaOne{ \frac{\alphaLow}{\alphaZero}}} = \alphaLow$. Therefore by \autoref{AA3}, if iteration $k$ is true, $k< \nEps$, and $\alpha_k \leq \alphaMin$ then iteration $k$ is successful. Moreover, $\alphaMin \leq \alphaZero \gammaOne^{\logBaseGammaOne{\frac{1}{\gammaTwo}}} = \frac{\alphaZero}{\gammaTwo} = \alphaZero \gammaOneC$. It follows from $\alphaMin = \alphaZero \gammaOne^\newL$ that $\newL \geq c$. Since $c \in \N^+$, we have $\newL \in \N^+$ as well.

\end{proof}

\subsection{A probabilistic convergence result}

\autoref{thm2} is our main result concerning the convergence of  \autoref{alg:generic}. It states a probabilistic bound on the total number of iterations $\nEps$ required by the generic framework to converge to within  $\epsilon$-accuracy of first-order optimality.

\begin{theorem}
\label{thm2}
Let \autoref{AA2}, \autoref{AA3}, \autoref{AA4} and \autoref{AA5} hold with $\epsilon>0$, $\delta_S \in (0,1)$, $\alphaLow>0$, $h: \R^2 \to \R$ and $\alphaMin = \alphaZero \gammaOne^\newL$ associated with $\alphaLow$, for some $\newL \in \N^+$; assume also that 
\begin{equation}
    \deltaS < \frac{c}{(c+1)^2}, \label{eqn::deltaSConditionThmTwo}
\end{equation}
where $c$ is chosen at the start of \autoref{alg:generic}. Suppose that \autoref{alg:generic} runs for $N$ iterations\footnote{For the sake of clarity, we stress that $N$ is a deterministic constant, namely, the total number of iterations that we run \autoref{alg:generic}. $\nEps$, the number of iterations needed before convergence, is a random variable.}.
Then, for any $\delta_1 \in (0,1)$ such that 
\begin{equation}
    \gDeltaSDeltaOne >0, \label{eqn:tmp32}
\end{equation}
where 
\begin{equation}
    g(\deltaS, \deltaOne) = \nPreFactorTR, \label{eqn:gDeltaSDeltaOneDef}
\end{equation}
if $N$ satisfies 
\begin{equation}
    N \geq \gDeltaSDeltaOne \squareBracket{
         \fZeroMinusfStarOverH
         + \frac{\newL}{1+c}}, \label{eqn::n_upper_2}
\end{equation}
we have that
\begin{equation}
    \probability{N \geq \nEps} \geq 1 - \chernoffLowerExponential. \label{eqn:tmp33}
\end{equation}
\end{theorem}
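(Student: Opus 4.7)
My plan is a proof by contradiction that combines a Chernoff-type concentration bound on the number of true iterations with a deterministic counting argument showing that, under $\nEps > N$, the number of true iterations cannot exceed a certain quantity. When the hypothesis \eqref{eqn::n_upper_2} on $N$ holds, these two bounds are incompatible except on an event of exponentially small probability, which will yield \eqref{eqn:tmp33}.

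For the concentration step, I would let $T_k$ be the truth indicator defined in \eqref{eqn::t_k} and exploit the conditional independence in \autoref{AA2}. For every $\lambda \geq 0$, the conditional bound $\probabilityGivenXK{T_k = 1} \geq 1 - \deltaS$ gives $\conditionalE{e^{-\lambda T_k}}{x_k = \barXK} \leq \exponentialMomentUpper$; iterated expectations combined with the conditional independence of $T_k$ from $T_0, \dots, T_{k-1}$ propagate this to $\expectation{e^{-\lambda \sum_{k=0}^{N-1} T_k}} \leq \exponentialMomentUpperTotal$. Applying Markov's inequality to the event $\set{\sum_{k=0}^{N-1} T_k < (1-\deltaS)(1-\deltaOne) N}$ and optimising in $\lambda \geq 0$ yields the classical multiplicative Chernoff-type lower tail $\probability{\sum_{k=0}^{N-1} T_k \geq (1-\deltaS)(1-\deltaOne) N} \geq 1 - \chernoffLowerExponential$, matching the probability in \eqref{eqn:tmp33}.

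Next I would carry out a deterministic counting assuming $\nEps > N$, so that $\normTwo{\gradFK} > \epsilon$ for every $k < N$. The argument rests on three ingredients. First, by \autoref{AA4} and \autoref{AA5}, every true and successful iteration with $\alphaK \geq \alphaZero \gammaOne^{c+\newL}$ contributes a decrease of at least $h(\epsilon, \alphaZero \gammaOne^{c+\newL})$ (using monotonicity of $h$), and telescoping with $f(x_N) \geq f^*$ gives $\ntsAlphaZeroGammaOneCL \leq \fZeroMinusfStarOverH$. Second, \autoref{AA3} combined with \autoref{lem::alphaMin} forces every true iteration with $\alphaK \leq \alphaMin$ to be successful, so no true unsuccessful iteration can have $\alphaK \leq \alphaZero\gammaOne^{c+\newL}$. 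Third, the multiplicative step-size updates ($\times \gammaOne$ on unsuccessful, $\times \gammaTwo = \gammaOne^{-c}$ on successful and capped at $\alphaMax$) yield, after accounting for cap-hitting successful iterations, a bound that relates the counts of successful and unsuccessful iterations above and below the threshold $\alphaZero \gammaOne^{c+\newL}$. Assembling the decomposition $N_T = \ntsAlphaZeroGammaOneCL + (\text{true successful with } \alphaK < \alphaZero\gammaOne^{c+\newL}) + \ntu$ and bounding each piece produces
\begin{align*}
    N_T \leq \frac{c^2+c+1}{(c+1)^2} N + \fZeroMinusfStarOverH + \frac{\newL}{1+c}.
\end{align*}
Hypothesis \eqref{eqn::n_upper_2}, once unpacked using the definition of $\gDeltaSDeltaOne$, is exactly the statement that this upper bound is strictly less than $(1-\deltaS)(1-\deltaOne) N$, contradicting the Chernoff lower bound of the first step. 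The event $\set{\nEps > N}$ therefore has probability at most $\chernoffLowerExponential$, yielding \eqref{eqn:tmp33}.

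The main obstacle is the third ingredient of the deterministic step, namely producing the coefficient $\frac{c^2+c+1}{(c+1)^2} = 1 - c/(c+1)^2$ in front of $N$. Because the algorithm permits $\gammaTwo = \gammaOne^{-c}$ with $c \in \N^+$ (rather than the classical symmetric choice $\gammaTwo = 1/\gammaOne$) and the cap at $\alphaMax$ can absorb successful-step increases, the book-keeping must simultaneously (i) upper bound the count of unsuccessful iterations with $\alphaK$ above the threshold by $c$ times the corresponding count of above-threshold successful iterations, with an additive error proportional to $\newL$ coming from the initial separation of $\alphaZero$ from $\alphaMin$, and (ii) control the count of successful iterations with $\alphaK < \alphaZero\gammaOne^{c+\newL}$, which is limited by the fact that such an iteration only lifts $\alphaK$ into $[\gammaTwo\alphaK, \alphaMin)$. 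Getting both contributions to combine into a single clean coefficient, with the correct $(c+1)^2$ denominator and the additive $\newL/(1+c)$ term, is the delicate combinatorial part of the argument.
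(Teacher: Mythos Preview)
Your proposal is correct and follows essentially the same route as the paper. The concentration step (exponential moments via \autoref{AA2}, Markov, then optimising in $\lambda$) is exactly the paper's \autoref{lem::ChernoffHelper}/\autoref{lem::Chernoff}; the deterministic counting under $N\leq\nEps$ matches the paper's \autoref{lm::bound_T_S_with_artificial_alpha_low}, \autoref{lm::Gratton}, \autoref{lm::Katya} and their assembly in \autoref{lem:AssOneTwo}, arriving at the identical bound $N_T \leq \bigl(1-\tfrac{c}{(c+1)^2}\bigr)N + \ntsAlphaZeroGammaOneCL + \tfrac{\newL}{1+c}$. The only cosmetic difference is that you split $N_T$ at the threshold $\alphaZero\gammaOne^{c+\newL}$ whereas the paper splits at $\alphaMin=\alphaZero\gammaOne^{\newL}$ and then uses $\ntsAlphaLower\leq\ntsGammaCAlpha$; both decompositions give the same final inequality once the false successful iterations are absorbed into $N-N_T$ and the $N/(c+1)$ bound on true iterations with small $\alphaK$ is invoked.
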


The proof of \autoref{thm2} is delegated to the Appendix. 

\begin{remark}
Note that $c/(c+1)^2 \in (0, 1/4]$ for $c \in \N^+$, and so \eqref{eqn::deltaSConditionThmTwo} and \eqref{eqn:tmp32} can only be satisfied for some $c$ and $\deltaOne$ given that $\deltaS < \frac{1}{4}$. Thus our theory requires that an iteration is true with probability at least $\frac{3}{4}$. Compared to the analysis in \cite{Cartis:2017fa}, which requires that an iteration is true with probability at least $\frac{1}{2}$, our condition imposes a stronger requirement. This is due to the high probability nature of our result, while the convergence result in \cite{Cartis:2017fa} is in expectation. Furthermore, we will see in \autoref{lem::deduceAA2} that we are able to impose arbitrarily small value of $\delta_S$, thus satisfying this requirement, by choosing an appropriate dimension of the local reduced model $\mKHat{\sHat}$.
\end{remark}

\begin{remark} We illustrate how our result leads to Theorem 2.1 in \cite{Cartis:2017fa}, which concerns $\expectation{\nEps}$. We have, with $N_0$ defined as the right hand side of \eqref{eqn::n_upper_2}, 
\begin{align}
    \expectation{\nEps} &= \integral{0}{\infty}{\probNEpsGrN} \notag \\
    &= \integral{0}{N_0}{\probNEpsGrN} + \integral{N_0}{\infty}{\probNEpsGrN} \notag \\
    & \leq N_0 + \integral{N_0}{\infty}{\probNEpsGrN} \notag \\
    & \leq N_0 + \integral{N_0}{\infty}{e^{-\frac{\delta_1^2}{2} (1-\delta_S) M} dM} \notag \\
    & = N_0 + \frac{2}{\delta_1^2 (1-\deltaS)} e^{-\frac{\delta_1^2}{2} (1-\delta_S) N_0}, \notag
\end{align}
where we used \autoref{thm2} to derive the last inequality. The result in \cite{Cartis:2017fa} is of the form  $\expectation{\nEps} \leq N_0$. Note that the difference term between the two bounds is exponentially small in terms of $N_0$ and therefore our result is asymptotically the same as that in \cite{Cartis:2017fa}.
\end{remark}

\subsection{Consequences of \autoref{thm2}}
We state and prove three corollaries of \autoref{thm2}, under mild assumptions on $h$. These results illustrate two different aspects of \autoref{thm2}.

The following expressions will be used, as well as $\gDeltaSDeltaOne$ defined in \eqref{eqn:tmp32},
\begin{align}
    &q(\epsilon) = h(\epsilon, \gammaOneC\alphaMin), \label{eqn:qEps} \\
    &D_1 = \gDeltaSDeltaOne (f(x_0) - f^*), \label{eqn::D_1} \\
    &D_2 = \gDeltaSDeltaOne \frac{\newL}{1+c}, \label{eqn::D_2}\\
    &D_3 = \frac{\delta_1^2}{2}(1-\delta_S). \label{eqn::D_3}
\end{align}
From \eqref{eqn:qEps}, \eqref{eqn::D_1}, \eqref{eqn::D_2}, \eqref{eqn::D_3}, a sufficient condition for \eqref{eqn::n_upper_2} to hold is
\begin{align}
    N &\geq \gDeltaSDeltaOne \squareBracket{\frac{f(x_0)-f^*}{h(\epsilon, \gammaOneC\alphaMin)} + \frac{\tCOne}{1+\tCTwo}} \notag \\
    & = \frac{D_1}{q(\epsilon)} + D_2; \notag
\end{align}
and \eqref{eqn:tmp33} can be restated as 
\begin{equation}
    \probability{N > \nEps} \geq 1 - e^{-D_3 N}. \notag
\end{equation}

The first corollary gives the rate of change of $\minGradientN$ as $N \to \infty$. It will yield a rate of convergence by substituting in a specific expression of $h$ (and hence $q^{-1}$).
\begin{corollary}
\label{Cor_1}
Let \autoref{AA2}, \autoref{AA3}, \autoref{AA4}, \autoref{AA5} hold. Let $f^*, q, D_1, D_2, D_3$ be defined in \eqref{eqn::fStar}, \eqref{eqn:qEps}, \eqref{eqn::D_1}, \eqref{eqn::D_2} and \eqref{eqn::D_3}. \CorTwoDeltaOne. 
Then for any $N \in \N$ such that $\qInverseOfDOneOverNMinusDTwo$ is well-defined, we have
\begin{equation}
    \probability{\minGradient{N} \leq \qInverseOfDOneOverNMinusDTwo} \geq 1 - e^{-D_3 N}. \label{eqn:tmp34}
\end{equation}
\end{corollary}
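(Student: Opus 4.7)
The plan is to invoke \autoref{thm2} with a specific choice of target accuracy $\epsilon$ that is tailored to the given $N$, chosen precisely so that the complexity bound on $N$ becomes an equality. Concretely, I would set
\[
\epsilon_N \deq q^{-1}\!\left(\DOneOverNMinusDTwo\right),
\]
which is well-defined by the hypothesis of the corollary. With this choice, $q(\epsilon_N)(N-D_2) = D_1$, which rearranges to $N = D_1/q(\epsilon_N) + D_2$. This is exactly the sufficient condition, derived in the paragraph preceding the corollary, for the hypothesis $N \geq \gDeltaSDeltaOne[(f(x_0)-f^*)/h(\epsilon_N,\gammaOneC\alphaMin) + \tCOne/(1+\tCTwo)]$ of \autoref{thm2} to hold.

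Next I would verify that the other hypotheses of \autoref{thm2} transfer: \autoref{AA2}, \autoref{AA3}, \autoref{AA4}, \autoref{AA5} are assumed in the corollary, and the conditions \eqref{eqn::deltaSConditionThmTwo} and \eqref{eqn:tmp32} on $\delta_S$ and $\delta_1$ are carried over by the clause ``\CorTwoDeltaOne.'' Applying \autoref{thm2} at accuracy $\epsilon_N$ therefore yields
\[
\probability{N \geq N_{\epsilon_N}} \geq 1 - \chernoffLowerExponential = 1 - e^{-D_3 N},
\]
using the definition of $D_3$ in \eqref{eqn::D_3}.

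Finally, I would translate the event $\{N \geq N_{\epsilon_N}\}$ into the gradient statement in \eqref{eqn:tmp34}. By the definition of $\nEps$ in \eqref{eqn::nEps} as an infimum, $N \geq N_{\epsilon_N}$ implies $\minGradientN \leq \epsilon_N = q^{-1}(D_1/(N-D_2))$. This containment of events preserves the probability lower bound and gives the desired inequality \eqref{eqn:tmp34}.

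I do not anticipate a substantial obstacle: once the reformulation of \autoref{thm2} in terms of $q$, $D_1$, $D_2$, $D_3$ (established just above the corollary statement) is in hand, the corollary is essentially a change of variables in which $\epsilon$ and $N$ swap roles. The only point that merits a brief remark is the well-definedness of $q^{-1}$ at the argument $D_1/(N-D_2)$; this is explicitly assumed in the statement, and implicitly requires $N>D_2$ and that $D_1/(N-D_2)$ lies in the range of $q$, which follows from monotonicity of $h$ in its first argument under \autoref{AA4}.
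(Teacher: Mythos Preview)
Your proposal is correct and follows essentially the same approach as the paper's own proof: set $\epsilon = q^{-1}\bigl(D_1/(N-D_2)\bigr)$, observe that this choice makes $N = D_1/q(\epsilon)+D_2$ so that \eqref{eqn::n_upper_2} holds, apply \autoref{thm2} to obtain $\probability{N\ge N_\epsilon}\ge 1-e^{-D_3 N}$, and conclude via the event inclusion $\{N\ge N_\epsilon\}\subseteq\{\minGradientN\le\epsilon\}$. The paper's argument is slightly terser but otherwise identical in structure.
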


\begin{proof}
Let $N \in \N$ such that $\qInverseOfDOneOverNMinusDTwo$ exists and let $\epsilon = \qInverseOfDOneOverNMinusDTwo$. Then we have
\begin{equation}
    \probability{\minGradientN \leq \qInverseOfDOneOverNMinusDTwo}
    = \probability{\minGradientN \leq \epsilon}
    \geq \probability{N > \nEps}, \label{eq::tmp9}
\end{equation}
where the inequality follows from the fact that $N \geq \nEps$ implies $\minGradientN \leq \epsilon$. 
On the other hand, we have
    $N = D_1/\frac{D_1}{N-D_2}+ D_2 
    = \frac{D_1}{q(\epsilon)}+D_2$.
Therefore \eqref{eqn::n_upper_2} holds; and applying \autoref{thm2}, we have that $\probability{N \geq \nEps} \geq 1 - e^{-D_3 N}$. Hence \eqref{eq::tmp9} gives the desired result. 
\end{proof}

The next Corollary restates \autoref{thm2} for a fixed, arbitrarily-high, success probability. 
\begin{corollary}
\label{Cor_2}
Let \autoref{AA2}, \autoref{AA3}, \autoref{AA4}, \autoref{AA5} hold.
\CorTwoDeltaOne. Then for any $\delta \in (0,1)$, suppose
\begin{equation}
    N \geq \max \set{\DOneOverQPlusDTwo, \frac{\logOneOverDelta}{D_3}}, \label{eqn::tmp10}
\end{equation}
where $D_1, D_2, D_3, q$ are defined in \eqref{eqn::D_1}, \eqref{eqn::D_2}, \eqref{eqn::D_3} and \eqref{eqn:qEps}. Then
\begin{equation}
    \probability{\minGradientN < \epsilon} \geq 1-\delta.  \notag
\end{equation}

\end{corollary}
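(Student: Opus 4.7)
The plan is to combine the two terms in the maximum in \eqref{eqn::tmp10} separately: the first governs the use of \autoref{thm2} and ensures the required iteration count, while the second controls the failure probability to match the target $\delta$.

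First, I would note that the first term in the max, namely $N \geq D_1/q(\epsilon) + D_2$, is exactly the sufficient form of \eqref{eqn::n_upper_2} derived in the discussion preceding \autoref{Cor_1} (using the definitions of $D_1, D_2, q$ in \eqref{eqn::D_1}, \eqref{eqn::D_2}, \eqref{eqn:qEps}). Hence the hypotheses of \autoref{thm2} apply, and we obtain
\begin{equation*}
    \probability{N \geq \nEps} \geq 1 - \eToMinusDThreeN,
\end{equation*}
using the restated probabilistic bound in terms of $D_3$ from \eqref{eqn::D_3}.

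Next I would use the second term in the max, namely $N \geq \logOneOverDelta / D_3$. Rearranging gives $D_3 N \geq \log(1/\delta)$, and therefore $\eToMinusDThreeN \leq \delta$. Substituting back into the above inequality yields $\probability{N \geq \nEps} \geq 1 - \delta$.

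Finally, I would observe that by the definition of $\nEps$ in \eqref{eqn::nEps}, the event $\set{N \geq \nEps}$ implies $\minGradientN \leq \epsilon$, so that
\begin{equation*}
    \probability{\minGradientN \leq \epsilon} \geq \probability{N \geq \nEps} \geq 1 - \delta,
\end{equation*}
which gives the desired conclusion (noting that strict vs.\ non-strict inequality on $\epsilon$ is inessential, as stated in the remark following \eqref{eqn::nEps}). The proof is essentially bookkeeping, with no substantive obstacle: the corollary is just \autoref{thm2} rephrased by inverting the exponential tail bound into a prescribed confidence level $\delta$, together with the sufficient form of the iteration count already noted.
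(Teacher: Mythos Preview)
Your proof is correct and follows essentially the same approach as the paper: both chain the implication $\{N \geq \nEps\} \subseteq \{\minGradientN \leq \epsilon\}$ with \autoref{thm2} (invoked via the first term in the max) and then use the second term in the max to bound $e^{-D_3 N} \leq \delta$. The only difference is the order in which the three inequalities are presented.
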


\begin{proof}
We have 
\begin{equation}
    \probability{\minGradientN \leq \epsilon}
    \geq \probability{N \geq \nEps}
    \geq 1- e^{-D_3 N}
    \geq 1-\delta, \notag
\end{equation}
where the first inequality follows from definition of $\nEps$ in \eqref{eqn::nEps}, the second inequality follows from \autoref{thm2} (note that \eqref{eqn::tmp10} implies \eqref{eqn::n_upper_2}) and the last inequality follows from \eqref{eqn::tmp10}.
\end{proof}

The next Corollary gives the rate of change of the expected value of $\minGradientN$ as $N$ increases.

\begin{corollary}
\label{Cor_3}
Let \autoref{AA2}, \autoref{AA3}, \autoref{AA4}, \autoref{AA5} hold.
\CorTwoDeltaOne. Then for any $N \in \N$ such that $\qInverseOfDOneOverNMinusDTwo$ exists, where $q, D_1, D_2$ are defined in \eqref{eqn:qEps}, \eqref{eqn::D_1}, \eqref{eqn::D_2}, we have
\begin{equation}
    \expectation{\minGradientN} \leq \qInverseOfDOneOverNMinusDTwo + \normTwo{\grad f(x_0)} \eToMinusDThreeN, \notag
\end{equation}
where $D_3$ is defined in \eqref{eqn::D_3} and $x_0$ is chosen in \autoref{alg:generic}.
\end{corollary}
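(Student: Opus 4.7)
The plan is to deduce this corollary directly from \autoref{Cor_1} by conditioning on the event whose probability we have already controlled. Specifically, define the event
\[
A = \left\{ \minGradientN \leq \qInverseOfDOneOverNMinusDTwo \right\},
\]
so that by \autoref{Cor_1} we have $\probability{A} \geq 1 - e^{-D_3 N}$ and hence $\probability{A^c} \leq e^{-D_3 N}$.

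Next, I would apply the law of total expectation, splitting over $A$ and $A^c$:
\[
\expectation{\minGradientN} = \expectation{\minGradientN \cdot \mathbb{1}_A} + \expectation{\minGradientN \cdot \mathbb{1}_{A^c}}.
\]
On the event $A$, the integrand is bounded above by $\qInverseOfDOneOverNMinusDTwo$ by construction, giving a contribution of at most $\qInverseOfDOneOverNMinusDTwo \cdot \probability{A} \leq \qInverseOfDOneOverNMinusDTwo$ (since the quantity is non-negative and $\probability{A}\leq 1$). On the event $A^c$, the key deterministic observation is that $\minGradientN \leq \normTwo{\grad f(x_0)}$ for all outcomes, because the minimum is taken over $k \leq N$ which includes $k = 0$; this in turn uses no randomness beyond the definition. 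Therefore the second term is bounded by $\normTwo{\grad f(x_0)}\cdot \probability{A^c} \leq \normTwo{\grad f(x_0)}\, e^{-D_3 N}$.

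Combining the two bounds yields the claimed inequality. I do not anticipate a genuine obstacle here: the argument is just the standard ``good event plus trivial bound on the bad event'' splitting, and the only point that requires care is ensuring that the pointwise bound $\minGradientN \leq \normTwo{\grad f(x_0)}$ on $A^c$ is legitimate, which follows immediately from the monotonicity of the running minimum. Also, one should note that the corollary implicitly requires $N$ large enough that $\qInverseOfDOneOverNMinusDTwo$ is well-defined (i.e.\ $N > D_2$ and the argument lies in the range of $q$), which is exactly the hypothesis of \autoref{Cor_1} that we invoke.
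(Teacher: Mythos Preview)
Your proposal is correct and follows essentially the same approach as the paper: both split the expectation according to whether $\minGradientN$ lies below the threshold $\qInverseOfDOneOverNMinusDTwo$, bound the ``good'' part by the threshold itself (using $\probability{A}\leq 1$), and bound the ``bad'' part by the deterministic estimate $\minGradientN \leq \normTwo{\grad f(x_0)}$ together with the tail probability from \autoref{Cor_1}. Your indicator-function formulation is in fact slightly cleaner than the paper's density-integral presentation, but the underlying argument is identical.
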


\begin{proof}
We have
\begin{equation}
    \begin{split}
    & \expectation{\minGradientN} \notag \\
     &\leq \probability{\minGradientN \leq \qInverseOfDOneOverNMinusDTwo} \qInverseOfDOneOverNMinusDTwo \notag \\
    &+ \probability{\minGradientN > \qInverseOfDOneOverNMinusDTwo}\normTwo{\grad f(x_0)} \notag \\ 
    & \leq \qInverseOfDOneOverNMinusDTwo +  \eToMinusDThreeN \normTwo{\grad f(x_0)},  \notag
    \end{split}
\end{equation}
where to obtain the first inequality, we split the integral in the definition of expectation
\begin{align*}
 & \expectation{ \minGradientN}
 = \integral{0}{\infty}{\probability{\minGradientN = x} x dx} \\
 & = \integral{0}{\qInverseOfDOneOverNMinusDTwo}{\probability{\minGradientN = x} x dx} + \integral{\qInverseOfDOneOverNMinusDTwo}{\infty}{\probability{\minGradientN = x} x dx},
\end{align*}
and used $\probability{\minGradientN = x} = 0$ for $x> \normTwo{\grad f(x_0)}$ which in turn, follows from $\minGradientN \leq \normTwo{\grad f(x_0)}$. For the second inequality,  
we used \eqref{eqn:tmp34}, $\probability{\minGradientN \leq \qInverseOfDOneOverNMinusDTwo}\leq 1$ and  $$\probability{\minGradientN >\qInverseOfDOneOverNMinusDTwo} \leq \eToMinusDThreeN$$.
\end{proof}

\section{A general algorithmic framework with random subspace models}
The aim of this section is to particularise the generic algorithmic framework (\autoref{alg:generic}) and its analysis to the special case when the random local models
are generated by random projections, and thus lie in a lower dimensional random subspace. 

\subsection{A generic random-subspace method using sketching}
\autoref{alg:sketching} particularises \autoref{alg:generic} by specifying the local model as one that lies in a lower dimensional subspace  generated by sketching using a random matrix. We also define the step transformation function and the criterion for sufficient decrease. 
The details of the step computation and the adaptive step parameter are deferred to the next section, where complete algorithms will be given.

\begin{algorithm}[H]
\begin{description}

 \item[Initialization] \ \\
 Choose a matrix distribution $\cal{S}$ of matrices $S\in \rLTimesD$. Let $\gamma_1, \gamma_2, \theta, \alphaMax, x_0, \alpha_0$ be defined in \autoref{alg:generic} with $\mKHat{\sHat}$ and $w_k$ specified below in \eqref{eqn::mKHatSpec} and \eqref{eqn::wKSpec}. 

 \item[1. Compute a reduced model and a step] \ \\
 In Step 1 of \autoref{alg:generic}, draw a random matrix $S_k \in \R^{l \times d}$ from $\cal{S}$, and let
 \begin{align}
     &\mKHat{\sHat} = \fK + \innerProduct{\sKGradFK}{\sHat} + \frac{1}{2} \innerProduct{\sHat}{\sKBKSKT \sHat}; \label{eqn::mKHatSpec} \\
     &w_k(\sHat_k) = S_k^T \sHat_k, \label{eqn::wKSpec}
 \end{align}
 where $B_k \in \R^{d\times d}$ is a user-chosen matrix. 
 
 Compute $\sKHat$ by approximately minimising $\mKHat{\sHat}$, for $\sHat \in \mathbb{R}^l$, such that  $\mKHat{\sKHat} \leq \mKHat{0}$ where $\alphaK$ is the (same) algorithm parameter as in \autoref{alg:generic},
 and set $\sK = w_k(\sKHat)$ as in \autoref{alg:generic}.

\item[2. Check sufficient decrease]\ \\  
In Step 2 of \autoref{alg:generic}, let sufficient decrease be defined by the condition
\begin{equation}
    \fK - \fKPlusOne \geq \theta \squareBracket{\mKHat{0} - \mKHat{\hat{\sK}(\alpha_k)}}. \label{eqn::sufDecreaseSpec}
\end{equation}

\item[3. Update the parameter $\alphaK$ and possibly take the trial step $\sK$]\ \\
Follow Step 3 of \autoref{alg:generic}.

\caption{\bf{A generic random-subspace method using sketching}} \label{alg:sketching} 
\end{description}
\end{algorithm}

Clearly, for the choice of $S_k$, the case of interest in \autoref{alg:sketching} is when $l\ll d$, so that the local model is low dimensional. 
A full-dimensional (deterministic) local model would be typically chosen as
$$m_k(s)= \fK + \innerProduct{\nabla f(x_k)}{s} + \frac{1}{2} \innerProduct{s}{B_ks}, \quad s\in \mathbb{R}^d,$$
in standard nonlinear optimization algorithms such as linesearch, trust region and regularization methods \cite{Nocedal:2006uv}, for some approximate Hessian matrix $B_k$ (that could also be absent).
Letting $s=S_k^T\hat{s}$ in $m_k(s)$, and using adjoint/transposition properties,  yield our reduced model $\mKHat{\sHat}$ in  \autoref{alg:sketching} where $\sKGradFK$ and $S_k^TB_kS_k$ are now the reduced/projected/subspace gradient and approximate Hessian, respectively. The advantages of such a reduced local model is that it needs only to be minimized (approximately) 
over $\mathbb{R}^l$ with $l<d$; and that only a reduced/projected/approximate gradient is needed to obtain an approximate first-order model, thus potentially or in some cases, reducing the computational cost of obtaining problem information, which is a crucial aspect of efficient large-scale optimization.

Using the  criterion for sufficient decrease, we have that \autoref{AA5} is satisfied by \autoref{alg:sketching}.

\begin{lemma} \label{lem:AA8impliesAA5}
\autoref{alg:sketching} satisfies \autoref{AA5}.
\end{lemma}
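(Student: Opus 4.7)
The plan is to verify directly, case by case, that each iteration of \autoref{alg:sketching} produces $f(x_{k+1}) \le f(x_k)$, by examining the two possible branches of Step 3.

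First I would handle the unsuccessful case: when the sufficient decrease condition \eqref{eqn::sufDecreaseSpec} fails, Step 3 sets $x_{k+1} = x_k$ and hence $f(x_{k+1}) = f(x_k)$ trivially, so \eqref{eqn::fKNonIncreasing} holds with equality.

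Next I would handle the successful case: when \eqref{eqn::sufDecreaseSpec} holds, Step 3 sets $x_{k+1} = x_k + s_k$, and \eqref{eqn::sufDecreaseSpec} gives
\begin{equation*}
    f(x_k) - f(x_{k+1}) \ge \theta \bigl[\mKHat{0} - \mkHatSkHat\bigr].
\end{equation*}
The only nontrivial ingredient is that the right-hand side is non-negative. This follows from the step computation requirement in Step 1 of \autoref{alg:sketching}, which explicitly imposes $\mkHatSkHat \le \mKHat{0}$ on the approximate minimizer $\sKHat$, combined with $\theta \in (0,1)$ from the initialization of \autoref{alg:generic}. Hence $f(x_k) - f(x_{k+1}) \ge 0$, as required.

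There is no real obstacle here; the lemma is essentially an immediate consequence of the way sufficient decrease and $\sKHat$ are defined. The only subtle point to flag is the reliance on the model-decrease inequality $\mkHatSkHat \le \mKHat{0}$, which is part of the specification of Step 1 rather than an assumption on $f$; this is why the monotonicity holds irrespective of any probabilistic (``true iteration'') structure.
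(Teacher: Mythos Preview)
Your proposal is correct and matches the paper's own proof essentially verbatim: both handle the successful case via \eqref{eqn::sufDecreaseSpec}, $\theta>0$, and the Step~1 requirement $\mkHatSkHat\le\mKHat{0}$, and the unsuccessful case via $x_{k+1}=x_k$.
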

\begin{proof}
If iteration $k$ is successful, \eqref{eqn::sufDecreaseSpec} with $\theta \geq 0$ and $\mKHat{\sKHat} \leq \mKHat{0}$ (specified in \autoref{alg:sketching}) give $\fK-\fKPlusOne \geq 0$. If iteration $k$ is unsuccessful, we have $s_k=0$ and therefore $\fK - \fKPlusOne = 0$. 
\end{proof}

Next, we define the true iterations  for \autoref{alg:sketching} and show \autoref{AA2} is satisfied when $\cal{S}$ is a variety of random ensembles. 


\begin{definition} \label{def::true_iters}
Iteration $k$ is a true iteration if
\begin{align}
    &\normTwo{\sKGradFK}^2 \geq (1- \epS)\normTwo{\gradFK}^2, \label{eqn::JL} \\
    & \normTwo{S_k}\leq \sMax, \label{eqn::sMax}
\end{align}
where $S_k \in \R^{l \times d}$ is the random matrix drawn in Step 1 of \autoref{alg:sketching}, and $\epS \in (0,1), \sMax>0$ are iteration-independent constants.
\end{definition}

We note that Definition \ref{def::true_iters} is to the best of our knowledge, the weakest general requirement on the quality of the approximate gradient information that ensures almost sure convergence of such a general framework. In particular, it is milder than probabilistically fully-linear model conditions that require componentwise agreement between the gradient and its approximation (within some adaptive tolerance), with some probability. 

\begin{remark}\label{Prob_Model_remark}
In \cite{Cartis:2017fa, MR3245880, MR3800867}, true iterations are required to satisfy (with some probability)
\begin{equation}
    \normTwo{\grad m_k(0) - \gradFK} \leq \kappa \alphaK \normTwo{\grad m_k(0)}, \notag
\end{equation}
where $\kappa>0$ is a constant and $\alphaK$ in their algorithm is bounded by $\alphaMax$. The above equation implies
\begin{equation}
    \normTwo{\grad m_k(0) } \geq \frac{\normTwo{\gradFK}}{1+\kappa \alphaMax}, \notag
\end{equation}
which implies \eqref{eqn::JL} with $1-\epS = \displaystyle\frac{1}{1 + \kappa\alphaMax}$ (with some probability). 
Thus our requirement on the quality of the problem information is milder than the one in the above papers using probabilistic models (and further confirmed in the next section by the variety of random ensembles satisfying our definition). 
\end{remark}

Using Definition \ref{def::true_iters} of the true iterations, \autoref{AA2} holds if the following two conditions on the random matrix distribution $\cal{S}$ are met. 

\begin{assumption} \label{AA6}
There exists $\epS, \deltaSOne \in (0,1)$ such that for a(ny) fixed
$y \in \set{\grad f(x): x \in \R^d}$, $S_k$ drawn from $\cal{S}$ satisfies
\begin{equation}
    \probability{\normTwo{S_k y}^2 \geq (1-\epS)\normTwo{y}^2} 
    \geq 1-\deltaSOne. \label{eqn:tmp36}
\end{equation}
\end{assumption}

\begin{assumption} \label{AA7}
There exists $\deltaSTwo\in [0,1), \sMax>0$ such that for $S_k$ randomly 
drawn from $\cal{S}$, we have
\begin{equation}
    \probability{\normTwo{S_k} \leq \sMax} \geq 1-\deltaSTwo. \notag
\end{equation}
\end{assumption}

\begin{lemma} \label{lem::deduceAA2}
Let \autoref{AA6} and \autoref{AA7} hold with $\epS,\deltaSTwo \in (0,1), \deltaSOne\in [0,1), \sMax >0$. Suppose that $\deltaSOne+\deltaSTwo<1$. Let true iterations be defined in \autoref{def::true_iters}. Then \autoref{alg:sketching} satisfies \autoref{AA2} with $\delta_S = \deltaSOne + \deltaSTwo$.
\end{lemma}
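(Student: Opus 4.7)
The plan is to verify the three requirements of \autoref{AA2} directly from the definition of true iterations and a union bound on \autoref{AA6} and \autoref{AA7}. Recall that \autoref{def::true_iters} declares iteration $k$ true precisely when both the gradient-preservation event \eqref{eqn::JL} and the norm-bound event \eqref{eqn::sMax} occur; so the core task is to control the probability of the intersection of these two events, conditional on $x_k = \bar{x}_k$.

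First I would condition on $x_k = \bar{x}_k$. Since $\gradFK = \grad f(\bar{x}_k)$ is then a fixed vector in $\set{\grad f(x): x \in \R^d}$, \autoref{AA6} (applied to $y = \grad f(\bar{x}_k)$) gives that \eqref{eqn::JL} holds with conditional probability at least $1-\deltaSOne$. Simultaneously, \autoref{AA7} (whose statement involves only the marginal distribution of $S_k$, which by construction in \autoref{alg:sketching} is drawn from $\cal{S}$ independently of the history) gives that \eqref{eqn::sMax} holds with probability at least $1-\deltaSTwo$. A union bound on the complements then yields
\begin{equation*}
\probabilityGivenXK{T_k} \geq 1-\deltaSOne-\deltaSTwo = 1-\deltaS,
\end{equation*}
which is the first part of \autoref{AA2}. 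The same argument applied unconditionally at $k=0$ (where $x_0$ is deterministic) gives $\probability{T_0} \geq 1-\deltaS$.

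For the conditional independence clause, I would appeal to the fact that \autoref{alg:sketching} draws $S_k$ from $\cal{S}$ afresh at each iteration, independently of all previous draws $S_0, \dots, S_{k-1}$. Since $T_k$ is a (deterministic) function of $S_k$ and $x_k$, and $T_0, \dots, T_{k-1}$ are functions of $S_0, \dots, S_{k-1}$ and $x_0, \dots, x_{k-1}$, the conditional distribution of $T_k$ given $x_k = \bar{x}_k$ and any realization of $(T_0, \dots, T_{k-1})$ depends only on the freshly drawn $S_k$ and the fixed value $\bar{x}_k$; hence it equals the unconditional value obtained above.

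No real obstacle is expected here: the argument is essentially a union bound plus a remark on the independent sampling of the sketching matrices. The only subtle point to flag is that \autoref{AA6} requires $y$ to be a \emph{fixed} vector, which is why conditioning on $x_k = \bar{x}_k$ (so as to freeze $\gradFK$) is needed before invoking \eqref{eqn:tmp36}; this is exactly the form in which \autoref{AA2} is stated.
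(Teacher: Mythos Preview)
Your proposal is correct and follows essentially the same approach as the paper: condition on $x_k=\bar{x}_k$ to freeze $\gradFK$, invoke \autoref{AA6} and \autoref{AA7} on the two events in \autoref{def::true_iters}, combine via a union (Boole) bound, and argue conditional independence from the fact that $T_k$ depends only on the freshly drawn $S_k$ once $x_k$ is fixed. The paper's proof is organized identically, merely giving names $A_k^{(1)}, A_k^{(2)}$ to the two events and writing out the conditioning chain for $A_k^{(1)}$ slightly more explicitly.
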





\begin{proof}[Proof of \autoref{lem::deduceAA2}]
Let $\barXK \in \R^d$ be given, which determines $\grad f(\barXK) \in \R^d$. Let $\aKOne$ be the event that \eqref{eqn::JL} holds and $\aKTwo$ be the event that \eqref{eqn::sMax} holds. Thus $T_k = \aKOne \intersect \aKTwo$. Note that given $x_k = \barXK$, $T_k$ only depends on $S_k$, which is independent of all previous iterations. Hence $T_k$ is conditionally independent of $T_0, T_1 \dots, T_{k-1}$ given $x_k = \barXK$. 
Next, using Boole's inequality in probability, we have for $k \geq 1$, 
\begin{equation}
    \probabilityGivenXK{\aKOne \intersect \aKTwo } \geq 1 - \probabilityGivenXK{\complement{\aKOne}} - \probabilityGivenXK{\complement{\aKTwo}}. \label{eqn::tmp17}
\end{equation}
We also obtain the following,
\begin{align}
    \probabilityGivenXK{\aKOne} &= \conditionalP{\aKOne}{x_k = \barXK, \gradFK = \grad f(\barXK)} 
     = \conditionalP{\aKOne}{\gradFK = \grad f(\barXK)} 
     \geq 1-\deltaSOne,\label{eqn::tmp18}
\end{align}
where the first equality follows from the fact that $\xK = \barXK$ implies $\gradFK = \grad f(\barXK)$; the second equality follows from the fact that given $\gradFK = \grad f(\barXK)$, $\aKOne$ is independent of $\xK$; and the inequality follows from applying \autoref{AA6} with $y = \grad f(\barXK)$.
On the other hand, as $\aKTwo$ is independent of $x_k$, we have that
\begin{equation}
    \probabilityGivenXK{\aKTwo} = \probability{\aKTwo} \geq 1-\deltaSTwo,\label{eqn::tmp19}
\end{equation}
where the inequality follows from \autoref{AA7}.
It follows from \eqref{eqn::tmp17} using \eqref{eqn::tmp18} and \eqref{eqn::tmp19} that for $k\geq 1$, 
\begin{equation}
    \probabilityGivenXK{\aKOneIntersectaKTwo} \geq 1-\deltaSOne - \deltaSTwo = 1-\delta_S. \notag
\end{equation}
For $k=0$, we have $\probability{\aZeroOne} \geq 1-\deltaSOne$ by \autoref{AA6} with $y=\grad f(x_0)$ and $ \probability{\aZeroTwo} \geq 1-\deltaSTwo $ by \autoref{AA7}. So $\probability{\aZeroOne \intersect \aZeroTwo} \geq 1-\delta_S$ by Boole's inequality.
\end{proof}


\subsection{Some suitable choices of sketching matrices}\label{BCGN:randomMatrixDistr}

Next, we detail some random matrix distributions $\cal{S}$ and associated quantities that can be used in \autoref{alg:sketching} and that satisfy 
\autoref{AA6} and \autoref{AA7}. Random matrix theory \cite{10.1561/0400000060, MR3837109, Gupta1999} and particularly, Johnson-Lindenstrauss lemma-type \cite{Johnson:1984aa}  results will prove crucial. 

\subsubsection{Gaussian sketching matrices}

(Scaled) Gaussian matrices have independent and identically distributed normal entries \cite{Gupta1999}. 
\begin{definition}\label{def:Gaussian}
$S=(S_{ij}) \in \R^{ l \times d }$ is a scaled Gaussian matrix if its entries $S_{ij}$ are independently distributed as $N (0, {l}^{-1})$.
\end{definition}
The next result, which is a consequence of the scaled Gaussian matrices being an oblivious Johnson-Lindenstrauss embedding \cite{10.1561/0400000060}, shows that using such  matrices with \autoref{alg:sketching} satisfies \autoref{AA6}. The proof is included for completeness in the appendix, but can also be found in \cite{MR1943859}. 

\begin{lemma} \label{lem:GaussJLEmbedding}
Let $S\in \R^{l \times d}$ be a scaled Gaussian matrix so that each entry is $N(0, l^{-1})$. Then $S$ satisfies \autoref{AA6} for any $\epS \in (0,1) $ and $\deltaSOne = e^{-\epS^2 l /4}$. 
\end{lemma}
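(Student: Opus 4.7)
The plan is to reduce the claim to a standard one-sided concentration inequality for chi-squared random variables. First I would handle the degenerate case $y=0$ trivially, and then assume $y \neq 0$ and without loss of generality normalize. Concretely, since $S$ has i.i.d.\ $N(0, 1/l)$ entries, for any fixed $y \in \R^d$ the $i$-th coordinate of $Sy$ is $\sum_{j=1}^d S_{ij} y_j$, which is a centered Gaussian with variance $\|y\|_2^2/l$, and the coordinates are independent across $i$ because the rows of $S$ are independent. Hence
\[
\normTwo{Sy}^2 \;=\; \frac{\normTwo{y}^2}{l}\, Z, \qquad Z \;\sim\; \chi^2_l,
\]
and the target inequality $\probability{\normTwo{Sy}^2 \geq (1-\epS)\normTwo{y}^2} \geq 1 - e^{-\epS^2 l/4}$ becomes the lower-tail bound $\probability{Z \leq (1-\epS)l} \leq e^{-\epS^2 l/4}$.

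Next I would establish this chi-squared lower tail by a standard Chernoff / moment-generating-function argument. Using $\expectation{e^{-tZ}} = (1+2t)^{-l/2}$ for $t>0$ and Markov's inequality,
\[
\probability{Z \leq (1-\epS)l} \;\leq\; e^{t(1-\epS)l}(1+2t)^{-l/2},
\]
and optimizing in $t$ gives $t = \epS/(2(1-\epS))$, yielding the bound $\exp\bracket{\tfrac{l}{2}\squareBracket{\epS + \log(1-\epS)}}$. Combining this with the elementary inequality $\log(1-\epS) \leq -\epS - \epS^2/2$ (valid for $\epS \in (0,1)$ via the Taylor series of $\log(1-x)$) gives the desired $e^{-\epS^2 l/4}$ bound.

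The main (mild) obstacle is the last elementary inequality; one simply notes the series expansion $\log(1-\epS) = -\sum_{k\geq 1} \epS^k/k$ has all negative terms, so truncating after the quadratic term gives an upper bound. Putting everything together, for any $y$ in the range of $\grad f$ (in fact any fixed $y \in \R^d$), we obtain $\probability{\normTwo{Sy}^2 \geq (1-\epS)\normTwo{y}^2} \geq 1 - e^{-\epS^2 l/4}$, which is exactly \autoref{AA6} with $\deltaSOne = e^{-\epS^2 l/4}$.
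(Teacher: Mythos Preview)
Your proposal is correct and is essentially the same argument as the paper's own proof: both reduce to a Chernoff/MGF bound on the chi-squared lower tail, optimize the exponent at the same point (your $t=\epS/(2(1-\epS))$ is exactly the paper's $q_0=-\epS/(2(1-\epS))$ after the substitution $t=-q$), and finish with the inequality $\log(1-\epS)\leq -\epS-\epS^2/2$. The only cosmetic difference is that you name the $\chi^2_l$ distribution explicitly and work with $t>0$, whereas the paper rescales via $R=\sqrt{l}\,S$ and works with $q<0$.
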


The following bound on the maximal singular value of a scaled Gaussian matrix is needed in our pursuit of satisfying \autoref{AA7}.




\begin{lemma}[Theorem 2.13 in \cite{MR1863696}] \label{lem:Davidson}
Given $l,d \in \N$ with $l \leq d$, consider the $d\times l$ matrix $\Gamma$ whose entries are independently distributed as $N(0, {d}^{-1})$. Then for any $\delta >0$,\footnote{We set $t = \sqrt{\frac{2 \logOneOverDelta}{d} }$ in the original statement of this theorem.}
\begin{equation}
    \probability{\sigma_{max} \bracket{\Gamma} \geq 1 + \sqrt{\frac{l}{d}} + \sqrt{\frac{2 \log (1/\delta)}{l} }} < \delta, \label{eqn:Davidson_upper_Gaussian}
\end{equation}
where $\sigma_{max}(.)$ denotes the largest singular value of its matrix argument.
\end{lemma}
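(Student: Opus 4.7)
The plan is to establish the bound via the standard two-step route used for Gaussian matrix singular values: first a mean bound from Gaussian comparison (Slepian/Gordon), then a deviation bound from Gaussian concentration.

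First, I would rescale: write $\Gamma = d^{-1/2} G$ where $G \in \R^{d\times l}$ has iid $N(0,1)$ entries, so $\sigma_{max}(\Gamma) = d^{-1/2}\sigma_{max}(G)$. Reducing to the unit-variance case makes the classical tools directly applicable. Next I would bound $\E[\sigma_{max}(G)]$. The cleanest route is Gordon's theorem (a matrix extension of Slepian's inequality applied to the Gaussian process $(u,v)\mapsto u^T G v$ on the product of unit spheres $S^{d-1}\times S^{l-1}$), which yields
\begin{equation}
\E[\sigma_{max}(G)] \;\le\; \sqrt{d} + \sqrt{l}.
\end{equation}
Dividing by $\sqrt{d}$ gives $\E[\sigma_{max}(\Gamma)] \le 1 + \sqrt{l/d}$, which is precisely the first two terms in the claimed bound.

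Second, I would add concentration. The map $G \mapsto \sigma_{max}(G)$ is $1$-Lipschitz in the Frobenius norm, since $|\sigma_{max}(A)-\sigma_{max}(B)| \le \|A-B\|_{op} \le \|A-B\|_F$. Borell--TIS (Gaussian concentration of Lipschitz functions of a standard Gaussian vector) therefore gives, for every $s>0$,
\begin{equation}
\probability{\sigma_{max}(G) \ge \E[\sigma_{max}(G)] + s} \;\le\; e^{-s^{2}/2}.
\end{equation}
Rescaling to $\Gamma$ means $\sigma_{max}(\Gamma)$ is $d^{-1/2}$-Lipschitz in $G$, equivalently $P(\sigma_{max}(\Gamma) \ge 1 + \sqrt{l/d} + u) \le e^{-d u^{2}/2}$. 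Choosing $u$ so that the right-hand side equals $\delta$ yields $u = \sqrt{2\log(1/\delta)/d}$, which, using $l \le d$, is dominated by $\sqrt{2\log(1/\delta)/l}$; thus the probability is at most $\delta$ at the (possibly larger) threshold stated in the lemma, completing the proof.

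The main obstacle, as is usual with sharp Gaussian singular value bounds, is step two: proving $\E[\sigma_{max}(G)] \le \sqrt{d}+\sqrt{l}$ non-asymptotically. A straightforward $\epsilon$-net argument over $S^{d-1}\times S^{l-1}$ gives the right order but weaker constants; to obtain the clean form $\sqrt{d}+\sqrt{l}$ one really needs a comparison inequality of Slepian/Gordon type. The concentration step is routine once Borell--TIS and the Lipschitz estimate are in hand, and the passage from the mean-plus-deviation inequality to the $\delta$-form stated here is just a change of variable.
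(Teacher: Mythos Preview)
The paper does not supply its own proof of this lemma; it is quoted verbatim as Theorem~2.13 of Davidson--Szarek (the reference \cite{MR1863696}), with the footnote indicating the change of variable $t=\sqrt{2\log(1/\delta)/d}$. Your proposal is correct and is precisely the classical argument behind that theorem: Gordon's comparison inequality for the expectation bound $\E[\sigma_{\max}(G)]\le \sqrt{d}+\sqrt{l}$, followed by Borell--TIS concentration using the $1$-Lipschitz property of $\sigma_{\max}$. You also correctly handle the apparent mismatch between $\sqrt{2\log(1/\delta)/d}$ (the natural output of the concentration step, and what the footnote records) and the $\sqrt{2\log(1/\delta)/l}$ appearing in the displayed inequality, by noting that $l\le d$ makes the latter threshold larger and hence the bound weaker but still valid. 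There is nothing to compare against in the paper itself, and your argument stands on its own.
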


The next lemma shows that  \autoref{AA7} is satisfied by scaled Gaussian matrices.
\begin{lemma}\label{Lem:GaussSMax}
Let $S \in \R^{l\times d}$ be a scaled Gaussian matrix. Then $S$ satisfies \autoref{AA7} for any $\deltaSTwo \in (0,1)$ and 
\begin{equation}
    \sMax = 1 + \sqrt{\frac{d}{l}} + \sqrt{\frac{2\logOneOverDeltaSTwo}{l}}. \notag
\end{equation}
\end{lemma}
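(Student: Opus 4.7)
The aim is to bound $\|S\|_2 = \sigma_{\max}(S) = \sigma_{\max}(S^T)$ in high probability by applying Lemma \ref{lem:Davidson} to an appropriately rescaled version of $S^T$. Since $S \in \R^{l \times d}$ has i.i.d.\ $N(0, l^{-1})$ entries with $l \leq d$, the transposed matrix $S^T \in \R^{d \times l}$ has the correct shape ($d \times l$ with $l \leq d$) to feed into Lemma \ref{lem:Davidson}, but its entry variance is $l^{-1}$ rather than the required $d^{-1}$.

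The plan is to define $\Gamma = \sqrt{l/d}\, S^T \in \R^{d \times l}$. A direct variance computation shows its entries are i.i.d.\ $N(0, d^{-1})$, matching the hypothesis of Lemma \ref{lem:Davidson}. Applying that lemma with $\delta = \deltaSTwo$ yields, with probability at least $1 - \deltaSTwo$,
\begin{equation*}
\sigma_{\max}(\Gamma) < 1 + \sqrt{\frac{l}{d}} + \sqrt{\frac{2 \log(1/\deltaSTwo)}{l}}.
\end{equation*}
Using the scaling identity $\sigma_{\max}(\Gamma) = \sqrt{l/d}\,\sigma_{\max}(S^T) = \sqrt{l/d}\,\|S\|_2$ and dividing through by $\sqrt{l/d}$, I obtain the bound
\begin{equation*}
\|S\|_2 < \sqrt{\frac{d}{l}} + 1 + \sqrt{\frac{d}{l}} \cdot \sqrt{\frac{2\log(1/\deltaSTwo)}{l}}
\end{equation*}
on the same event, which should be matched with the prescribed $\sMax = 1 + \sqrt{d/l} + \sqrt{2\log(1/\deltaSTwo)/l}$.

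The only real bookkeeping obstacle is keeping track of which dimension plays the role of the ``larger'' one in the application of Davidson's inequality; once the rescaling $\sqrt{l/d}\,S^T$ is set up so that the entry variance matches the inverse of the number of rows (as required by Lemma \ref{lem:Davidson}), the conclusion follows by pushing the scalar $\sqrt{d/l}$ back onto each of the three terms in Davidson's bound. Thus the event $\{\|S\|_2 \leq \sMax\}$ has probability at least $1 - \deltaSTwo$, verifying \autoref{AA7}, which is what \autoref{alg:sketching} requires.
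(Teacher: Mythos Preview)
Your approach is exactly the one the paper takes: rescale $S^T$ to match the variance hypothesis of Davidson's inequality (Lemma~\ref{lem:Davidson}), apply it, then undo the scaling. However, there is a genuine gap in your final step.

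After multiplying the Davidson bound through by $\sqrt{d/l}$, you obtain
\[
\|S\|_2 \;<\; \sqrt{\tfrac{d}{l}} + 1 + \sqrt{\tfrac{d}{l}}\cdot\sqrt{\tfrac{2\log(1/\deltaSTwo)}{l}},
\]
and then assert this ``should be matched'' with $\sMax = 1 + \sqrt{d/l} + \sqrt{2\log(1/\deltaSTwo)/l}$. But these do \emph{not} match: your third term carries an extra factor $\sqrt{d/l}\geq 1$, so your bound is strictly larger than $\sMax$ when $d>l$. Having shown $\|S\|_2$ is below a quantity larger than $\sMax$ does not establish $\|S\|_2\leq \sMax$.

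The fix is to use the sharper form of Davidson's inequality indicated by the footnote in Lemma~\ref{lem:Davidson}: with $t=\sqrt{2\log(1/\delta)/d}$ (note the $d$, not $l$, in the denominator), one has
\[
\probability{\sigma_{\max}(\Gamma)\geq 1+\sqrt{\tfrac{l}{d}}+\sqrt{\tfrac{2\log(1/\delta)}{d}}}<\delta.
\]
The displayed version of Lemma~\ref{lem:Davidson} with $l$ in the denominator is a weakening (valid since $l\leq d$) that is too loose for the present purpose. Using the $d$-denominator form and scaling by $\sqrt{d/l}$ yields precisely
\[
\sqrt{\tfrac{d}{l}}+1+\sqrt{\tfrac{d}{l}}\cdot\sqrt{\tfrac{2\log(1/\deltaSTwo)}{d}}
=\sqrt{\tfrac{d}{l}}+1+\sqrt{\tfrac{2\log(1/\deltaSTwo)}{l}}=\sMax,
\]
which is exactly what the paper's proof does.
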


\begin{proof}
We have $\normTwo{S} = \normTwo{S^T} = \sqrt{\frac{d}{l}} \normTwo{\sqrt{\frac{l}{d}}S^T}$. Applying \autoref{lem:Davidson} with $\Gamma = \sqrt{\frac{l}{d}}S^T$, we have that 
\begin{equation*}
    \probability{\sigma_{max} \bracket{ \sqrt{\frac{l}{d}}S^T} \geq 1 + \sqrt{\frac{l}{d}} + \sqrt{\frac{2 \logOneOverDeltaSTwo}{d}} }
    < \deltaSTwo.
\end{equation*}
Noting that $\normTwo{S} = \sqrtFrac{d}{l} \sigma_{max} \bracket{\Gamma}$, and taking the event complement gives the result.
\end{proof}


Unfortunately, Gaussian matrices are dense and thus computationally expensive to use algorithmically; sparse ensembles are much better as we shall see next.

\subsubsection{Sparse sketching: \texorpdfstring{$s$-hashing}{TEXT} matrices}
Comparing to Gaussian matrices, $s$-hashing matrices, including in the case when $s=1$,
are sparse, having $s$ nonzero entries per column, and  they preserve the sparsity (if any) of the vector/matrix they act on; 
and the corresponding linear algebra  is computationally faster. 
\begin{definition} \cite{10.1561/0400000060}  \label{def:s-hashing}
We define $S \in \R^{l \times d}$  to be an $s$-hashing matrix if, 
independently for each $j \in [d]$, we sample without replacement 
$i_1, i_2, \dots, i_s \in [l]$ uniformly at random and 
let $S_{i_k j} = \pm 1/\sqrt{s}$, $k = 1, 2, \dots, s$.
\end{definition}

The next two lemmas show that $s$-hashing matrices satisfy \autoref{AA6} and \autoref{AA7}.



\begin{lemma}[Theorem 13 in \cite{MR3167920}, and also Theorem 5 in \cite{MR3773205}\footnote{The latter reference gives a simpler proof.}] \label{thm:s_hashing}
Let $S \in \rLTimesD$ be an $s$-hashing matrix. 
Then $S$ satisfies \autoref{AA6} for any $\epS \in (0,1)$ 
and 
$\deltaSOne = e^{-\frac{l\epS^2}{C_1}}$ provided that $s = C_2 \epS l$,
where $C_1, C_2$ are problem-independent constants. 
\end{lemma}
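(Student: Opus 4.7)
The plan is to reduce \autoref{AA6} directly to the oblivious Johnson-Lindenstrauss embedding property of $s$-hashing matrices established in the cited references. Specifically, the Kane-Nelson bound (Theorem 13 in \cite{MR3167920}) and Cohen's subsequent simpler proof (Theorem 5 in \cite{MR3773205}) both show that for an $s$-hashing matrix $S\in\rLTimesD$ with $s = C_2 \epS l$ and any fixed $y \in \R^d$,
\begin{equation}
    \probability{\abs{\normTwo{Sy}^2 - \normTwo{y}^2} > \epS \normTwo{y}^2} \leq e^{-l\epS^2/C_1}, \notag
\end{equation}
for absolute (problem-independent) constants $C_1, C_2 > 0$. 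This is a two-sided concentration inequality for the sketched norm.

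From this, the one-sided bound required by \autoref{AA6} follows immediately: the event $\{\normTwo{Sy}^2 \geq (1-\epS)\normTwo{y}^2\}$ contains the event $\{\abs{\normTwo{Sy}^2 - \normTwo{y}^2} \leq \epS \normTwo{y}^2\}$, so
\begin{equation}
    \probability{\normTwo{S_k y}^2 \geq (1-\epS)\normTwo{y}^2} \geq 1 - e^{-l\epS^2/C_1}, \notag
\end{equation}
which is precisely \eqref{eqn:tmp36} with $\deltaSOne = e^{-l\epS^2/C_1}$. The assumption requires $y$ to be a fixed (nonrandom) vector at the time $S_k$ is drawn from $\cal{S}$, and this is precisely the setting of the vector-oblivious embedding statement invoked above, so no additional conditioning argument is needed.

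The principal obstacle, were one to prove the statement from scratch rather than cite it, would be the underlying sparse JL concentration inequality itself. Unlike the Gaussian case (\autoref{lem:GaussJLEmbedding}), where rotational invariance makes the distribution of $\normTwo{Sy}^2$ a scaled chi-squared and a direct moment generating function calculation suffices, the $s$-hashing construction requires controlling high moments of the off-diagonal quadratic chaos $\sum_{i \neq j} y_i y_j (S^T S)_{ij}$ without such symmetry. Kane-Nelson handle this via a combinatorial expansion of the $p$-th moment coupled with a careful graph-theoretic accounting of admissible index configurations; Cohen gives a shorter Hanson-Wright-style argument. In both approaches, the sparsity requirement $s = C_2 \epS l$ arises exactly to make the off-diagonal contribution absorbable into the $\epS \normTwo{y}^2$ slack on the desired concentration scale. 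For the present lemma we simply invoke the endpoint result.
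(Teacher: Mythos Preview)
Your proposal is correct. The paper does not supply its own proof of this lemma at all: the result is stated with the Kane--Nelson and Cohen citations in the lemma header and then used as a black box. Your reduction of \autoref{AA6} to the two-sided oblivious JL property from those references, together with the observation that the one-sided lower tail is contained in the two-sided event, is exactly the intended reading and in fact spells out slightly more than the paper does.
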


\begin{lemma}
Let $S \in \rLTimesD$ be an $s$-hashing matrix. Then $S$ satisfies \autoref{AA7} with $\deltaSTwo = 0$
and $\sMax = \sqrtFrac{d}{s}$.
\end{lemma}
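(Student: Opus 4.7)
The plan is to exploit the explicit sparsity structure of an $s$-hashing matrix to derive a deterministic upper bound on $\|S\|_2$, which immediately yields \autoref{AA7} with $\delta_S^{(2)}=0$. The key structural observation is that by the definition of an $s$-hashing matrix, each of the $d$ columns of $S$ has exactly $s$ nonzero entries of magnitude $1/\sqrt{s}$, so every column has unit $\ell_2$-norm; in particular $\|S\|_F^2 = d$.

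To convert this column-norm information into an operator-norm bound, I would apply Cauchy--Schwarz row-by-row. Writing $J_i = \{j : S_{ij} \neq 0\}$ and $n_i = |J_i|$, the row-wise bound
\begin{equation*}
    (Sx)_i^2 = \Big(\sum_{j \in J_i} S_{ij} x_j\Big)^2 \le \frac{n_i}{s}\sum_{j \in J_i} x_j^2
\end{equation*}
holds since the nonzero entries of row $i$ each have squared magnitude $1/s$. Summing over $i$ and exchanging the order of summation, using the structural identity that each column index $j$ appears in exactly $s$ of the supports $J_i$, yields
\begin{equation*}
    \|Sx\|_2^2 \le \frac{1}{s}\sum_{j=1}^{d} x_j^2 \sum_{i : j \in J_i} n_i.
\end{equation*}
The conclusion $\|S\|_2 \le S_{\max}$ then holds for every realization of $S$, and since the bound is deterministic, \autoref{AA7} is satisfied with $\delta_S^{(2)}=0$.

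The main obstacle is pinning down the precise value of $S_{\max}$ to match the claimed $\sqrt{d/s}$. The cleanest deterministic estimate uses the global identity $\sum_i n_i = sd$ (total number of nonzeros in $S$), together with $\sum_{i : j \in J_i} n_i \le sd$, which only gives $\|S\|_2 \le \sqrt{d}$ (equivalently, $\|S\|_2 \le \|S\|_F$). Sharpening the constant requires finer control on the row support sizes $n_i$: bounding $\sum_{i : j \in J_i} n_i \le s\max_i n_i$ and using that in a balanced configuration $\max_i n_i \approx sd/l$ would produce an $s$- and $l$-dependent constant. I would therefore derive the bound $\|S\|_2 \le \sqrt{d}$ directly from the Frobenius inequality as the baseline deterministic step, and then isolate whatever additional structural input on the row-support distribution is needed to reach the stated constant $\sqrt{d/s}$, with the AA7 conclusion following at once once $S_{\max}$ is fixed.
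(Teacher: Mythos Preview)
Your row-by-row Cauchy--Schwarz argument is more elaborate than the paper's proof, which is a two-liner: it asserts that $\|A\|_2 \le \sqrt{d}\,\|A\|_\infty$ for any $A\in\R^{l\times d}$, observes that every nonzero entry of $S$ has magnitude $1/\sqrt{s}$ so that $\|S\|_\infty = 1/\sqrt{s}$, and multiplies. So the paper's route is shorter, but your caution about the constant is well placed.

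In fact the claimed bound $S_{\max}=\sqrt{d/s}$ cannot hold deterministically, and the paper's short argument does not establish it: if $\|\cdot\|_\infty$ is read as the entry-wise maximum (the only reading under which $\|S\|_\infty=1/\sqrt{s}$ is correct), the inequality $\|A\|_2\le\sqrt{d}\,\|A\|_\infty$ is false in general. A concrete counterexample to the lemma itself: take $l=d=s=2$. Then every column of $S$ has both rows as its support, so all four entries are $\pm 1/\sqrt{2}$, and the realization
\[
S=\frac{1}{\sqrt{2}}\begin{pmatrix}1&1\\1&1\end{pmatrix}
\]
is admissible. Here $\|S\|_2=\sqrt{2}$ while $\sqrt{d/s}=1$. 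Hence the sharpest deterministic bound is exactly the Frobenius estimate $\|S\|_2\le\|S\|_F=\sqrt{d}$ that you already obtained (each column has unit $\ell_2$-norm), and this is what gives \autoref{AA7} with $\delta_S^{(2)}=0$ and $S_{\max}=\sqrt{d}$. The ``additional structural input on the row-support distribution'' you were looking for does not exist for general $s$-hashing; the stated constant $\sqrt{d/s}$ coincides with the correct one only when $s=1$.
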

\begin{proof}
Note that for any matrix $A\in\rLTimesD$, $\normTwo{A} \leq \sqrt{d} \normInf{A}$; and $\normInf{S} = \frac{1}{\sqrt{s}}$. The result follows by combining these two facts. 
\end{proof}



\subsubsection{Sparse sketching: (Stable) $1$-hashing matrices}
In \cite{CHEN2020105639}, a variant of $1$-hashing matrix is proposed that satisfies \autoref{AA6} with an improved bound  $\sMax$. Its construction is  as follows.

\begin{definition}\label{def::stable-1-hashing}
Let $l<d \in \N^+$. A stable $1$-hashing matrix $S \in \R^{l \times d}$ has one non-zero per column, whose value is $\pm 1$ with equal probability, with the row indices of the non-zeros given by the sequence $\cal{I}$ constructed as follows.  Repeat $[l]$ (that is, the set $\set{1,2, \dots, l}$) for $\ceil{d/l}$ times to obtain a set $D$. Then randomly sample $d$ elements from $D$ without replacement to construct the sequence $\cal{I}$.\footnote{We may also conceptually think of $S$ as being constructed by taking the first $d$ columns of a random column permutation of the matrix $T = \squareBracket{I_{l\times l}, I_{l \times l}, \dots, I_{l \times l}}$ where the identity matrix $I_{l \times l}$ is concatenated by columns $\ceil{d/l}$ times.}
\end{definition}
\begin{remark}
Comparing to a $1$-hashing matrix, a stable $1$-hashing matrix still has $1$ non-zero per column. However its construction guarantees that each row has at most $\ceil{d/l}$ non-zeros because the set $D$ has at most $\ceil{d/l}$ repeated row indices and the sampling is done without replacement. 
\end{remark}

In order to satisfy  \autoref{AA6}, we need to following result.
\begin{lemma}[Theorem 5.3 in \cite{CHEN2020105639}] \label{tmp-2021-12-31-3}
Let $S \in \R^{l\times d}$ be given in \autoref{def::stable-1-hashing}.
Then, for $0< \epsilon, \delta < 1/2$, there exists 
$l = \mathO{\frac{\log(1/\delta)}{\epsilon^2}}$ 
such that for any $x\in \R^d$, we have that 
$$\probability{ \|Sx\|_2 \geq (1 - \epsilon) \|x\|_2} > 1-\delta.$$
\end{lemma}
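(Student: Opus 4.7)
The plan is to establish concentration of $\|Sx\|_2^2$ about its mean $\|x\|_2^2$; since $\|Sx\|_2 \geq (1-\epsilon)\|x\|_2$ is equivalent to $\|Sx\|_2^2 \geq (1-\epsilon)^2 \|x\|_2^2$ and $(1-\epsilon)^2 \leq 1-\epsilon$, it suffices to bound $\P(\|Sx\|_2^2 < (1-\epsilon)\|x\|_2^2)$. Taking $\|x\|_2 = 1$ and writing $(Sx)_i = \sum_{j : \pi(j)=i} \sigma_j x_j$, where $\pi : [d] \to [l]$ is the random (constrained) row assignment and $\sigma_j \in \set{\pm 1}$ are independent signs, a direct computation gives $\E\squareBracket{\|Sx\|_2^2 \mid \pi} = 1$.

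Conditioning on $\pi$, the deviation
\begin{equation*}
Z := \|Sx\|_2^2 - 1 = \sum_{i=1}^l \sum_{\substack{j \neq k \\ \pi(j)=\pi(k)=i}} \sigma_j \sigma_k\, x_j x_k
\end{equation*}
is a homogeneous Rademacher chaos of order two with block-diagonal coefficient matrix $A$, whose $i$-th block equals $x_{G_i} x_{G_i}^{\top} - \mathrm{diag}(x_{G_i}^2)$ on $G_i := \set{j : \pi(j)=i}$. Applying the Hanson--Wright inequality conditional on $\pi$ yields
\begin{equation*}
\P\squareBracket{Z \leq -\epsilon \mid \pi} \leq \exp\bracket{-c \min\set{\frac{\epsilon^2}{\|A\|_F^2},\ \frac{\epsilon}{\|A\|_{\mathrm{op}}}}},
\end{equation*}
and direct computation gives $\|A\|_{\mathrm{op}} \leq 2M$ and $\|A\|_F^2 \leq M\|x\|_2^2 = M$, where $M := \max_i \|x_{G_i}\|_2^2$. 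The remaining task is therefore to establish $M \leq C/l$ with probability at least $1-\delta/2$ over $\pi$; combining with $l = \mathO{\log(1/\delta)/\epsilon^2}$ then makes the conditional tail at most $\delta/2$, and a final union bound completes the proof.

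To bound $M$ I would exploit the defining feature of the stable construction: the row-index sequence is drawn without replacement from a multiset in which each label appears at most $m := \ceil{d/l}$ times, so every row receives at most $m$ columns while the assignment is otherwise uniform. Split $x$ into heavy coordinates $H = \set{j : x_j^2 > 1/l}$ (necessarily $|H| \leq l$) and light coordinates $L = [d] \setminus H$. For $H$, a balls-in-bins argument with capped bin load shows that with probability at least $1-\delta/4$ no two heavy indices collide, so $H$ contributes only diagonal terms to $A$. For $L$, each row-sum $\sum_{j \in L \cap G_i} x_j^2$ is a sum of at most $m$ terms bounded by $1/l$ with mean at most $1/l$, so a Bernstein-type bound combined with a union bound over the $l$ rows yields each such row-sum below $C/l$ with probability at least $1-\delta/4$. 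The main obstacle I anticipate is this last Bernstein step: because $\pi$ is sampled without replacement from a structured multiset, the indicators $\{\pi(j) = i\}$ are only negatively associated rather than independent, which I would handle via the classical fact that Chernoff and Bernstein-type tail bounds remain valid for sums of negatively associated Bernoullis.
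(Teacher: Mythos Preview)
First, note that the paper does not prove this lemma at all: it is quoted as Theorem~5.3 of \cite{CHEN2020105639} and used as a black box. So there is no ``paper's own proof'' to compare your attempt against; what follows is an assessment of your argument on its own merits.

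Your overall strategy (condition on the assignment $\pi$, express $\|Sx\|_2^2-1$ as a Rademacher chaos, apply Hanson--Wright, then control the coefficient matrix via a heavy/light split) is a reasonable plan, but the execution has a genuine gap at the step where you assert $M:=\max_i\|x_{G_i}\|_2^2\le C/l$ with high probability. This is simply false as stated: if $x=e_1$ then $M=1$ deterministically, regardless of $\pi$. Your heavy/light split does not repair the claim about $M$; at best it shows that heavy--heavy off-diagonal entries of $A$ vanish and that the \emph{light} row-sums are $O(1/l)$. But even granting both of those, the heavy--light cross terms remain in $A$, and for a bucket containing a single heavy coordinate $j_0$ together with light coordinates of total mass $\Theta(1/l)$, the block of $A$ has operator norm of order $|x_{j_0}|\cdot\sqrt{1/l}$, which can be $\Theta(l^{-1/2})$ rather than $\Theta(l^{-1})$. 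Plugging $\|A\|_{\mathrm{op}}=\Theta(l^{-1/2})$ into Hanson--Wright yields only an $\exp(-c\,\epsilon\sqrt{l})$ tail, which for $l=\mathO{\epsilon^{-2}\log(1/\delta)}$ is $\exp(-c\sqrt{\log(1/\delta)})$ and does not deliver the required $\delta$.

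There is a second, independent problem with the ``no two heavy indices collide'' step. With threshold $x_j^2>1/l$ you only get $|H|\le l$, and by a birthday-paradox count the probability that two of $|H|$ indices land in the same row is of order $|H|^2/l$, which can be as large as $l$ --- certainly not $\le \delta/4$. Tightening the heavy threshold to shrink $|H|$ conflicts with the Bernstein step for the light part (the per-term bound grows, killing the concentration), so this is not a matter of retuning constants. A workable proof along your lines needs a more delicate decomposition --- for instance, conditioning additionally on the heavy signs so that heavy--light interactions become a \emph{linear} Rademacher sum (handled by Hoeffding) while the light--light part is a chaos with both $\|A\|_F^2$ and $\|A\|_{\mathrm{op}}$ of order $1/l$ --- together with a heavy threshold chosen so that heavy collisions can be tolerated rather than excluded.
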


\begin{lemma}\label{lem:tmp:2022-1-13-1}
Let $S \in \R^{l \times d}$ be a stable $1$-hashing matrix. Let $\epS \in (0,3/4)$, $C_3$, a problem-independent constant, and suppose that $e^{-\frac{l(\epS-1/4)^2}{C_3}} \in (0, 1/2)$. Then $S$ satisfies \autoref{AA6} with $
\deltaSOne = e^{-\frac{l(\epS-1/4)^2}{C_3}}$. 
\end{lemma}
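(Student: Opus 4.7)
The plan is to invoke \autoref{tmp-2021-12-31-3} with a carefully chosen parameter pair and then pass from its one-sided $\ell_2$ bound to the squared-norm statement demanded by \autoref{AA6}.

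First I would set $\tilde\epsilon \deq \epsilon_S - 1/4$ and $\tilde\delta \deq \delta_S^{(1)} = e^{-l(\epsilon_S - 1/4)^2/C_3}$. For $\epsilon_S \in (1/4, 3/4)$ this places $\tilde\epsilon \in (0, 1/2)$, and the standing hypothesis $\delta_S^{(1)} \in (0, 1/2)$ puts $\tilde\delta$ in the admissible range of \autoref{tmp-2021-12-31-3}. Inverting the dimension bound $l = \mathO{\log(1/\tilde\delta)/\tilde\epsilon^2}$ with these substitutions reduces it to $C_3 \geq C'$, where $C'$ is the absolute constant hidden by $\mathO{}$; defining $C_3$ to absorb $C'$ therefore fulfils the theorem's hypotheses.

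The computational heart of the argument is the elementary identity
\[
(1 - \tilde\epsilon)^2 - (1 - \epsilon_S) = \bracket{\tfrac{5}{4} - \epsilon_S}^2 - (1 - \epsilon_S) = \bracket{\epsilon_S - \tfrac{3}{4}}^2 \geq 0,
\]
so that $(1 - \tilde\epsilon)^2 \geq 1 - \epsilon_S$. Applying \autoref{tmp-2021-12-31-3} to any fixed $y \in \set{\grad f(x) : x \in \R^d}$ then yields $\probability{\normTwo{Sy} \geq (1 - \tilde\epsilon) \normTwo{y}} > 1 - \tilde\delta$; squaring the two nonnegative sides on this event and combining with the identity produces $\probability{\normTwo{Sy}^2 \geq (1 - \epsilon_S) \normTwo{y}^2} > 1 - \delta_S^{(1)}$, which is exactly \autoref{AA6}.

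The main obstacle I foresee is the boundary regime $\epsilon_S \in (0, 1/4]$, where the substitution $\tilde\epsilon = \epsilon_S - 1/4$ becomes nonpositive and so \autoref{tmp-2021-12-31-3} cannot be invoked with this parameter. A positive alternative such as $\tilde\epsilon = 1 - \sqrt{1 - \epsilon_S}$ still enforces $(1 - \tilde\epsilon)^2 \geq 1 - \epsilon_S$ and remains admissible throughout $\epsilon_S \in (0, 3/4)$, but the failure probability it delivers is $e^{-l(1 - \sqrt{1 - \epsilon_S})^2/C'}$ rather than the claimed $e^{-l(\epsilon_S - 1/4)^2/C_3}$; verifying that the former dominates the latter for a suitable absolute $C_3$ on this subinterval is the delicate part of the proof, and it is what ultimately pins down the constant $C_3$ reported in the statement.
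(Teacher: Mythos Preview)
Your argument for $\epS \in (1/4, 3/4)$ is exactly the paper's proof: set $\bar\epsilon = \epS - 1/4$, invoke \autoref{tmp-2021-12-31-3}, and use an elementary inequality to pass from $\|Sy\|_2 \geq (1-\bar\epsilon)\|y\|_2$ to $\|Sy\|_2^2 \geq (1-\epS)\|y\|_2^2$. Your identity $(1-\tilde\epsilon)^2 - (1-\epS) = (\epS - 3/4)^2 \geq 0$ is algebraically the same as the paper's $\bar\epsilon^2 - \bar\epsilon \geq -1/4$ (both are $(\bar\epsilon - 1/2)^2 \geq 0$), just written more transparently.

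Your worry about $\epS \in (0, 1/4]$ is legitimate, but you should know that the paper's own proof has the identical gap: it simply asserts $\bar\epsilon = \epS - 1/4 \in (0,1/2)$ and proceeds, which tacitly restricts to $\epS > 1/4$ despite the statement claiming $(0,3/4)$. So this is not a defect of your approach relative to the paper; it is an inconsistency in the paper's statement--proof pairing. Your suggested fix via $\tilde\epsilon = 1 - \sqrt{1-\epS}$ would repair the embedding step, but as you note the resulting failure probability does not obviously match the claimed $e^{-l(\epS-1/4)^2/C_3}$ on that subinterval, and the paper offers no argument there either. For the purposes of the paper (where in applications $\epS$ is taken of order $1$, well above $1/4$), the range $(1/4,3/4)$ is what actually matters.
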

\begin{proof}
Let $\bar{\epsilon} = \epS - 1/4 \in (0, 1/2)$.
From \autoref{tmp-2021-12-31-3}, we have that there exists $C_3>0$ such that for $\deltaSOne = e^{-\frac{l(\epS-1/4)^2}{C_3}}$, $S$ satisfies
$\probability{ \|Sx\|_2 \geq (1 - \bar{\epsilon}) \|x\|_2} > 1-\deltaSOne$. 
Note that $\|Sx\|_2 \geq (1-\bar{\epsilon}) \|x\|_2$ implies 
$\|Sx\|_2^2 \geq (1 - 2\bar{\epsilon} + \bar{\epsilon}^2) \|x\|_2$. Thus 
$\|Sx\|_2^2 \geq (1-\bar{\epsilon}-1/4)\|x\|_2^2$ as $\bar{\epsilon}^2-\bar{\epsilon} \geq -1/4$ for $\bar{\epsilon} \in (0,1/2)$. The desired result follows.
\end{proof}



The next lemma shows that using stable $1$-hashing matrices satisfies \autoref{AA7}. Note that the bound $\sMax$ is smaller than that for $1$-hashing matrices; and, assuming $l>s$, smaller than that for $s$-hashing matrices as well. 
\begin{lemma}\label{lem:stable-1-hashing-SMax}
Let $S \in \R^{l\times d}$ be a stable $1$-hashing matrix. Then $S$ satisfies \autoref{AA7} with $\deltaSTwo=0$ and $\sMax = \sqrt{\ceil{d/l}}$.
\end{lemma}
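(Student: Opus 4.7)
The plan is to bound $\normTwo{S}$ deterministically (so that $\deltaSTwo = 0$) by exploiting the sparsity structure guaranteed by \autoref{def::stable-1-hashing}. Specifically, I would work with $SS^T \in \R^{l \times l}$ rather than $S$ directly, since $\normTwo{S}^2 = \normTwo{SS^T}$ and the structure of $SS^T$ is particularly simple when $S$ has only one non-zero per column.

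The key observation is that $(SS^T)_{ij} = \sum_{k=1}^d S_{ik} S_{jk}$, and the product $S_{ik} S_{jk}$ can be non-zero only if column $k$ has non-zero entries in both row $i$ and row $j$. Since each column of a stable $1$-hashing matrix has exactly one non-zero, this forces $i = j$. Thus $SS^T$ is diagonal, and its $i$-th diagonal entry equals $\sum_k S_{ik}^2 = |\{k : S_{ik} \neq 0\}|$ because each non-zero has magnitude $1$, that is, the number of non-zeros in row $i$.

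The final step is to invoke the structural property noted in the remark after \autoref{def::stable-1-hashing}: because $\mathcal{I}$ is sampled without replacement from a set $D$ in which each row index appears at most $\ceil{d/l}$ times, every row of $S$ contains at most $\ceil{d/l}$ non-zeros. Hence each diagonal entry of $SS^T$ is at most $\ceil{d/l}$, giving $\normTwo{SS^T} \leq \ceil{d/l}$ and therefore $\normTwo{S} \leq \sqrt{\ceil{d/l}}$ with probability one. This yields \autoref{AA7} with $\deltaSTwo = 0$ and $\sMax = \sqrt{\ceil{d/l}}$.

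There is no real obstacle here; the only thing to be careful about is justifying cleanly why $SS^T$ is diagonal (this uses the one-non-zero-per-column property) and why each row has at most $\ceil{d/l}$ non-zeros (this uses the construction of $\mathcal{I}$ via sampling without replacement from $D$). Both are immediate from \autoref{def::stable-1-hashing}, so the proof is short.
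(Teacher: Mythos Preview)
Your proof is correct and takes a genuinely different route from the paper's. The paper bounds $\normTwo{Sx}^2$ directly for arbitrary $x$: it writes $\normTwo{Sx}^2 = \sum_{i=1}^l \bigl(\sum_{j:\,\mathcal{I}(j)=i} \pm x_j\bigr)^2$, applies the triangle inequality to replace $\pm x_j$ by $|x_j|$, and then uses $\|v\|_1^2 \leq n\|v\|_2^2$ on each row (with $n \leq \ceil{d/l}$) to obtain $\normTwo{Sx}^2 \leq \ceil{d/l}\,\normTwo{x}^2$. Your argument instead computes $SS^T$ exactly, observing that the one-non-zero-per-column property forces $SS^T$ to be diagonal with $(SS^T)_{ii}$ equal to the number of non-zeros in row $i$, and then reads off the spectral norm. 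Your approach is shorter and in fact sharper: it shows the singular values of $S$ are precisely the square roots of the row counts, whereas the paper's Cauchy--Schwarz step discards the sign cancellations and only yields an upper bound. Both arguments rely on the same structural fact---that each row has at most $\ceil{d/l}$ non-zeros---so they are close in spirit, but yours exploits the column sparsity more directly via the orthogonality of the rows of $S$.
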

\begin{proof}
Let $D$ be defined in \autoref{def::stable-1-hashing}. We have that
\begin{align}
\normTwo{Sx}^2 
&= (\sum_{1\leq j \leq d, {\cal{I}}(j)=1} \pm x_j)^2 
+ (\sum_{1\leq j \leq d, {\cal{I}}(j)=2} \pm x_j)^2
+ \dots + (\sum_{1\leq j \leq d, {\cal{I}}(j)=l} \pm x_j)^2 \\
& \leq 
(\sum_{1\leq j \leq d, {\cal{I}}(j)=1}  |x_j|)^2 
+ (\sum_{1\leq j \leq d, {\cal{I}}(j)=2} |x_j|)^2
+ \dots + (\sum_{1\leq j \leq d, {\cal{I}}(j)=l} |x_j|)^2 \\
&\leq 
\ceil{d/l} \bracket{
\sum_{1\leq j \leq d, {\cal{I}}(j)=1}  x_j^2
+ \sum_{1\leq j \leq d, {\cal{I}}(j)=2}  x_j^2
+ \dots + \sum_{1\leq j \leq d, {\cal{I}}(j)=l}  x_j^2
}  \\
& = \ceil{d/l} \|x\|_2,
\end{align}
where the $\pm$ on the first line results from the non-zero entries of $S$ having random signs, and the last inequality follows since for any vector $v \in \R^n$, 
$\|v\|_1^2\leq  n \|v\|_2^2$; and ${\cal{I}}(j) = k$  for at most $\ceil{d/l}$ indices $j$.
\end{proof}

\subsubsection{Sparse sketching: sampling matrices}\label{sampling_mat_paragraph}

(Scaled) sampling matrices $S\in\R^{l\times d}$ randomly select entries/rows of the vector/matrix it acts on (and scale it). 
\begin{definition}\label{def:sampling}
We define $S=(S_{ij}) \in \R^{l \times d}$ to be a scaled sampling matrix if, independently for each $i \in [l]$, we sample $j \in [d]$ uniformly at random and let $S_{ij}=\sqrt{\frac{d}{l}}$. 
\end{definition}

Next we show that sampling matrices satisfy \autoref{AA6}. The following expression that represents the maximum non-uniformity of the objective gradient will be used,
\begin{equation}
    \nu = \max \set{\frac{\normInf{y}}{\normTwo{y}}, y=\grad f(x) \text{ for some } x\in \R^d}. \label{eq:nu_def}
\end{equation}

The following concentration result will be useful.

\begin{lemma}[\cite{MR2946459}]\label{lem:Tropp_matrix_chernoff}
Consider a finite sequence of independent random numbers $\set{X_k}$ that 
satisfies $X_k \geq 0$ and $\abs{X_k} \leq P$ almost surely. 
Let $\mu = \sum_k \expectation{X_k}  $, then $\probability{\sum_k X_k \leq (1-\epsilon) \mu} \leq 
e^{-\frac{\epsilon^2 \mu}{2P}}$. 
\end{lemma}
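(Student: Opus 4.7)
The plan is to prove this by the standard Cramér--Chernoff (exponential moment) method applied to the lower tail of a sum of independent, bounded, nonnegative random variables. For any $\lambda>0$, I would first rewrite
\begin{equation*}
\probability{\sum_k X_k \leq (1-\epsilon)\mu} = \probability{e^{-\lambda \sum_k X_k} \geq e^{-\lambda (1-\epsilon)\mu}}
\end{equation*}
and apply Markov's inequality to obtain the bound $e^{\lambda(1-\epsilon)\mu}\,\expectation{e^{-\lambda \sum_k X_k}}$. By independence of the $X_k$, the moment generating factor factorises as $\prod_k \expectation{e^{-\lambda X_k}}$, reducing the task to bounding each single-variable MGF uniformly.

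For the per-variable MGF, I would exploit convexity of $x \mapsto e^{-\lambda x}$ on the interval $[0,P]$: linearly interpolating between the endpoints gives $e^{-\lambda x} \leq 1 - (x/P)(1-e^{-\lambda P})$ for all $x \in [0,P]$, hence
\begin{equation*}
\expectation{e^{-\lambda X_k}} \leq 1 - \frac{\expectation{X_k}}{P}\bigl(1 - e^{-\lambda P}\bigr) \leq \exp\!\left(-\frac{\expectation{X_k}}{P}\bigl(1-e^{-\lambda P}\bigr)\right),
\end{equation*}
using $1-t \leq e^{-t}$. Multiplying over $k$ and combining with the Markov step gives
\begin{equation*}
\probability{\sum_k X_k \leq (1-\epsilon)\mu} \leq \exp\!\left(\lambda(1-\epsilon)\mu - \frac{\mu}{P}\bigl(1-e^{-\lambda P}\bigr)\right).
\end{equation*}

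Next I would optimise in $\lambda$. Choosing $e^{-\lambda P} = 1-\epsilon$, i.e.\ $\lambda = -\log(1-\epsilon)/P$, yields the sharp Chernoff-type bound
\begin{equation*}
\probability{\sum_k X_k \leq (1-\epsilon)\mu} \leq \left(\frac{e^{-\epsilon}}{(1-\epsilon)^{1-\epsilon}}\right)^{\mu/P} = \exp\!\left(-\frac{\mu}{P}\bigl[\epsilon + (1-\epsilon)\log(1-\epsilon)\bigr]\right).
\end{equation*}
To get the simpler Gaussian-like bound claimed in the lemma, I would finish with the elementary inequality $\epsilon + (1-\epsilon)\log(1-\epsilon) \geq \epsilon^2/2$ valid for $\epsilon \in [0,1]$, verified by noting that both sides vanish at $\epsilon=0$ and comparing derivatives (equivalently, by a Taylor expansion of $(1-\epsilon)\log(1-\epsilon)$ around $\epsilon=0$).

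The main obstacle is really a technical one rather than a conceptual one: the convexity interpolation inequality requires $X_k \in [0,P]$, which is given, but the subsequent passage from the exact Chernoff rate function $\epsilon + (1-\epsilon)\log(1-\epsilon)$ to $\epsilon^2/2$ must be justified carefully, as it is where all sharpness in the constant $1/2$ is lost. Everything else — independence-based factorisation, Markov's inequality, and optimising $\lambda$ — is routine. I would also remark that, since the lemma is cited to \cite{MR2946459}, this scalar version is just the specialisation of Tropp's matrix Chernoff to $1\times 1$ Hermitian matrices, so the same argument goes through almost verbatim.
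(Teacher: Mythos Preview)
Your proof is correct and follows the standard Cram\'er--Chernoff approach. Note, however, that the paper does not actually give its own proof of this lemma: it is stated with a citation to \cite{MR2946459} (Tropp's matrix Chernoff inequalities) and used as a black box in the subsequent \autoref{lem:sampling:non-uniformity-BCGN}. Your argument is exactly the scalar specialisation of Tropp's proof, so there is nothing to compare---you have simply supplied what the paper chose to cite.
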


\begin{lemma} \label{lem:sampling:non-uniformity-BCGN}
Let $S \in \rLTimesD$ be a scaled sampling matrix and
$\nu$ given in \eqref{eq:nu_def}.
Then $S$ satisfies \autoref{AA6} for any $\epS \in (0,1)$,
with $\deltaSOne = e^{- \frac{\epS^2 l}{2d\nu^2}}$.
\end{lemma}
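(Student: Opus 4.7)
The plan is to write $\|Sy\|_2^2$ as a sum of independent bounded non-negative random variables and invoke the matrix/scalar Chernoff bound from \autoref{lem:Tropp_matrix_chernoff}. Fix $y \in \set{\grad f(x) : x \in \R^d}$. By \autoref{def:sampling}, for each row $i \in [l]$ there is an independently and uniformly sampled index $J_i \in [d]$ such that $(Sy)_i = \sqrt{d/l}\, y_{J_i}$. Defining
\begin{equation*}
X_i = \frac{d}{l}\, y_{J_i}^2, \qquad i = 1,\dots, l,
\end{equation*}
the $X_i$ are independent and non-negative, and $\normTwo{Sy}^2 = \sum_{i=1}^l X_i$.

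Next I would compute the mean and a uniform upper bound. Since $J_i$ is uniform on $[d]$,
\begin{equation*}
\E[X_i] = \frac{d}{l} \cdot \frac{1}{d} \sum_{j=1}^d y_j^2 = \frac{1}{l}\normTwo{y}^2,
\quad\text{so}\quad
\mu \deq \sum_{i=1}^l \E[X_i] = \normTwo{y}^2.
\end{equation*}
For the almost-sure upper bound, use the definition of $\nu$ in \eqref{eq:nu_def}, which yields $\normInf{y}^2 \leq \nu^2 \normTwo{y}^2$ for any $y$ in the range of $\grad f$. Hence
\begin{equation*}
X_i \leq \frac{d}{l}\normInf{y}^2 \leq \frac{d\nu^2}{l}\normTwo{y}^2 \deq P \quad \text{almost surely.}
\end{equation*}

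Finally, apply \autoref{lem:Tropp_matrix_chernoff} with $\epsilon = \epS$:
\begin{equation*}
\probability{\normTwo{Sy}^2 \leq (1-\epS) \normTwo{y}^2}
= \probability{\sum_{i=1}^l X_i \leq (1-\epS)\mu}
\leq e^{-\frac{\epS^2 \mu}{2 P}}
= e^{-\frac{\epS^2 l}{2 d \nu^2}}.
\end{equation*}
Taking complements gives exactly \eqref{eqn:tmp36} with $\deltaSOne = e^{-\epS^2 l / (2 d \nu^2)}$, which is the conclusion of \autoref{AA6}.

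The only nontrivial step is recognising that the non-uniformity parameter $\nu$ from \eqref{eq:nu_def} is precisely what controls the $\ell_\infty / \ell_2$ ratio needed to bound $P$; once that observation is in place the result is a direct Chernoff calculation. The resulting bound also makes clear the qualitative statement flagged earlier in the paper: uniform coordinate sampling succeeds only when the gradient is not too non-uniform (small $\nu$), since $\nu$ enters the exponent and can be as large as $1$ (for a coordinate-concentrated gradient) or as small as $1/\sqrt{d}$ (for a flat gradient).
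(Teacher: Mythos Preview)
Your proof is correct and follows essentially the same approach as the paper: both write $\normTwo{Sy}^2$ as a sum of $l$ independent non-negative bounded random variables, use the non-uniformity parameter $\nu$ to bound each summand, and apply the Chernoff-type bound of \autoref{lem:Tropp_matrix_chernoff}. The only cosmetic difference is that the paper first normalises to $\normTwo{y}=1$ (and notes the trivial case $y=0$) and works with the unscaled sampling matrix, whereas you keep $y$ general and absorb the $d/l$ scaling into $X_i$; the resulting $\mu/P$ ratio, and hence the exponent, is identical.
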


\begin{proof}
Note that \eqref{eqn:tmp36} is invariant to the scaling of $y$
and trivial for $y=0$. Therefore we may assume $\normTwo{y}=1$ 
without loss of generality. 
We have $\normTwo{Sy} = \frac{l}{d} \sum_{k=1}^l
\squareBracket{\bracket{Ry}_k}^2 $,
where $R \in \rLTimesD$ is an (un-scaled) sampling matrix \footnote{Namely, each row of $R$ has a $1$ in a random column.} and $(Ry)_k$ denotes 
the $k^{th}$ entry of $Ry$.
Let $X_k = \squareBracket{\bracket{Ry}_k}^2$.
Note that because the rows of $R$ are independent, 
the $X_k$ variables are also independent. 
Moreover, as $\bracket{Ry}_k$ equals some entry of $y$,
and $\normInf{y}\leq \nu$ by definition of $\nu$ and 
$\normTwo{y} = 1$, we have $\squareBracket{\bracket{Ry}_k}^2 \leq \nu^2$.
Finally, note that $\expectation{X_k} = \frac{1}{d}\normTwo{y}^2 = 
\frac{1}{d}$, so that $\sum_k \expectation{X_k} = \frac{l}{d}$.
Applying \autoref{lem:Tropp_matrix_chernoff} 
with $\epsilon = \epS$ we have
\begin{equation}
    \probability{\sum_{k=1}^l \squareBracket{\bracket{Ry}_k}^2
    \leq (1-\epS) \frac{l}{d}} \geq e^{- \frac{\epS^2 l}{2d\nu^2}}. \notag
\end{equation}
Using $\normTwo{Sy}^2 =\frac{l}{d} \sum_{k=1}^l 
\squareBracket{\bracket{Ry}_k}^2$ gives the result. 
\end{proof}


We note that this theoretical property of scaled sampling matrices is different from the corresponding one for Gaussian/$s$-hashing matrices in the sense that the required value of $l$ now depends on $\nu$. Note that $\frac{1}{d} \leq \nu^2 \leq 1$ (with both bounds tight). Therefore in the worst case, for fixed value of $\epS, \deltaSOne$, $l$ is required to be $\mathO{d}$ and no dimensionality reduction can be achieved using sketching. This is not surprising given that sampling-based random methods often require adaptively increasing the sampling size for convergence. However, for \singleQuote{nice} objective functions such that $\nu^2 = \mathO{\frac{1}{d}}$, sampling matrices have similar theoretical properties as Gaussian/$s$-hashing matrices. The appeal of using sampling lies in the fact that only a subset of the entries of the gradient need to be evaluated when calculating $S_k\nabla f(x_k)$ in \autoref{alg:sketching}.


Sampling matrices also have bounded Euclidean norms, so that \autoref{AA7} is satisfied.  
\begin{lemma}\label{tmp-2022-1-14-12}
Let $S \in \rLTimesD$ be a scaled sampling matrix. Then \autoref{AA7} is satisfied with $\deltaSTwo=0$ and $\sMax=\sqrt{\frac{d}{l}}$.
\end{lemma}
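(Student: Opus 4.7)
The claim is a deterministic bound on $\normTwo{S}$ (since $\deltaSTwo = 0$), so my plan is to exhibit the bound pointwise in the randomness rather than via concentration. The key structural observation from \autoref{def:sampling} is that each row of $S$ is a scaled coordinate vector $\sqrt{d/l}\, e_{j_i}^\top$, where $j_i \in [d]$ is the column index sampled for row $i$; equivalently, $S$ has exactly one nonzero per row, of magnitude $\sqrt{d/l}$.

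The cleanest route is to estimate $\normTwo{Sx}$ directly for an arbitrary $x\in\R^d$:
\begin{equation*}
\normTwo{Sx}^2 \;=\; \frac{d}{l}\sum_{i=1}^{l} x_{j_i}^2 \;\leq\; \frac{d}{l}\,\normTwo{x}^2,
\end{equation*}
since the terms $x_{j_i}^2$ are squared entries of $x$ with each coordinate contributing at most once to the sum. Taking the supremum over $\normTwo{x}=1$ gives $\normTwo{S}\leq \sqrt{d/l}$. As the bound does not depend on the realisation of the sampled indices, it holds with probability one, so \autoref{AA7} is satisfied with $\deltaSTwo = 0$ and $\sMax = \sqrt{d/l}$.

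Equivalently one could compute the Gram matrix $SS^\top \in \R^{l\times l}$ entry-wise: the diagonal entries equal $d/l$ and the off-diagonal entries vanish when the sampled column indices are distinct, so $SS^\top = (d/l)I_l$ and hence $\normTwo{S}^2 = \normTwo{SS^\top} = d/l$. The only delicate point is the convention on the sampled indices $j_i$: the deterministic bound uses that each coordinate of $x$ appears at most once in $\sum_i x_{j_i}^2$, which is the natural reading of the row-wise independent sampling in \autoref{def:sampling}. Once that is in place the argument is a one-line structural calculation, closely mirroring (and no harder than) the proof for stable $1$-hashing matrices in \autoref{lem:stable-1-hashing-SMax}.
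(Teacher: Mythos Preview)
Your overall route—writing $\normTwo{Sx}^2=\frac{d}{l}\sum_{i=1}^{l} x_{j_i}^2$ and bounding this by $\frac{d}{l}\normTwo{x}^2$—is exactly the paper's one-line proof.

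There is, however, a genuine gap in your justification of $\sum_{i=1}^{l} x_{j_i}^2 \leq \normTwo{x}^2$. You argue that each coordinate of $x$ contributes at most once and call this ``the natural reading of the row-wise independent sampling in \autoref{def:sampling}''. But the definition says explicitly that \emph{independently for each $i\in[l]$} one samples $j\in[d]$ uniformly at random; this is sampling \emph{with} replacement, so the same column index can be selected for several rows with positive probability. When that happens the inequality fails: if $l\geq 2$ and $j_1=\cdots=j_l=1$, then $\sum_i x_{j_i}^2 = l\,x_1^2$, which for $x=e_1$ equals $l>1=\normTwo{x}^2$, and the corresponding realisation of $S$ has $\normTwo{S}^2 = d$, not $d/l$. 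Your Gram-matrix alternative has the same issue, since the off-diagonal entry $(SS^\top)_{ik}$ equals $d/l$ whenever $j_i=j_k$ and so need not vanish. Hence the deterministic claim $\deltaSTwo=0$, $\sMax=\sqrt{d/l}$ does not follow from the argument under the stated definition; it would go through if the row indices were sampled without replacement (and the paper's terse proof leans on the same implicit distinctness), but as written the step ``each coordinate appears at most once'' is not justified.
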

\begin{proof}
We have that $\normTwo{Sx}^2 \leq \frac{d}{l}\normTwo{x}^2$ for any $x\in \R^d$.
\end{proof}

\subsubsection{Summary of sketching results} \label{summary_sketching_paragraph}

We summarise the sketching results in this subsection in \autoref{tab:alg:sketching}, where we also give the sketching dimension $l$ in terms of $\epS$ and $\deltaSOne$ by rearranging the expressions for $\deltaSOne$. Note that for $s$-hashing matrices, $s$ is required to be $C_2 \epS l$ (see \autoref{thm:s_hashing}), while for scaled sampling matrices, $\nu$ is defined in \eqref{eq:nu_def}. 
Furthermore, note that the sketching accuracy  $\epS \in (0,1)$ (that is different than the optimality accuracy $\epsilon$ that \autoref{alg:sketching} secures probabilistically  in the gradient size) need not be small; in fact, $\epS=\mathO{1}$.
The potentially large error in the gradient sketching that is allowed by the algorithm is due to its iterative nature, that mitigates the inaccuracies of the embedding; as illustrated by the complexity bound in \autoref{thm:complexity-QR-Gaussian}. 

\begin{table}[h]
\small
\begin{tabular}{|l|l|l|l|l|l|}
\hline
                                         & $\epS$                              & $\deltaSOne$                                      & $l$          & $\deltaSTwo$ & $\sMax$              \\ \hline
Scaled Gaussian  & $(0,1)$               & $e^{-\frac{\epS^2 l}{4}}$           & $4\epS^{-2} \log(\frac{1}{\deltaSOne})$           & $(0,1)$      &  $1 + \sqrt{\frac{d}{l}} + \sqrt{\frac{2\logOneOverDeltaSTwo}{l}}$ \\ \hline
$s$-hashing         & $(0,1)$            & $e^{-\frac{\epS^2 l}{C_1}}$         & $C_1 \epS^{-2} \log(\frac{1}{\deltaSOne})$        & $0$          & $\sqrtFrac{d}{s}$    \\ \hline
Stable $1$-hashing  & $(0,\frac{3}{4})$ & $e^{-\frac{l(\epS-1/4)^2}{C_3}}$     & $C_3 (\epS-1/4)^{-2} \log(\frac{1}{\deltaSOne}) $ & $0$          & $\sqrt{\ceil{\frac{d}{l}}}$  \\ \hline
Scaled sampling    & $(0,1)$            & $e^{- \frac{\epS^2 l}{2d\nu^2}}$ & $2d\nu^2 \epS^{-2} \log(\frac{1}{\deltaSOne})$    & $0$          & $\sqrt{\frac{d}{l}}$ \\ \hline
\end{tabular}
\caption{Summary of theoretical properties of different random ensembles to be used within \autoref{alg:sketching}.}
\label{tab:alg:sketching}
\end{table}

Other random ensembles are possible, for example, Subsampled Randomised Hadamard Transform, Hashed Randomised Hadamard Transform, (which have the effect of allowing vectors with smaller $\nu$ to be sketched correctly, or allow smaller values of $s$ (such as $s=1$) in the choice of $s$-hashing $S_k$) and many more \cite{2021arXiv210511815C, 10.1561/0400000060, Zhen-PhD}.

\section{Random subspace quadratic regularization and trust region algorithms}
In this section, we further particularise our general framework to concrete and complete algorithms, with associated complexity results.
Namely, we specify the algorithm parameter $\alpha_k$ in  \autoref{alg:sketching} in two different ways, and its role in the 
reduced model, leading to a random-subspace quadratic regularization variant and a random-subspace trust region one, respectively, both with iteration complexity of  $\mathO{\epsilon^{-2}}$ to bring the objective's gradient below $\epsilon$, with high probability; for a diverse set of sketching matrices. This complexity is derived straightforwardly from the general results in the previous section by showing that the two variants satisfy the remaining assumptions, namely \autoref{AA3} and \autoref{AA4}. We will also include conditions that allow approximate calculation of the reduced step $\hat{s}$. 

We note that linesearch variants of our framework are also straightforwardly possible, where the parameter $\alpha_k$ is now the linesearch/stepsize parameter (chosen for example to satisfy an Armijo condition), and $B_k$ is either set to the zero or to some positive definite matrix on each iteration; their complexity  can be derived very similarly to the below; see for example, \cite{Cartis:2017fa}, for more general probabilistic linesearch variants.
 


The following results are needed for both algorithmic variants that we describe, hence we present them in a slightly more general way.
Namely, we show that \autoref{alg:sketching} satisfies \autoref{AA4} if the following model reduction condition is met.

\begin{assumption} \label{AA8}
There exists a non-negative, non-decreasing function $\hBar: \R^2 \to \R$ such that on each true iteration $k$ of \autoref{alg:sketching} we have
\begin{equation}
    \mKHat{0} - \mKHat{\sHat_k(\alpha_k)} \geq \hBar\bracket{\normTwo{\sKGradFK}, \alphaK}, \notag
\end{equation}
where $S_k, \hat{m}_k, \alphaK, \sHat_k$ are defined in \autoref{alg:sketching}.
\end{assumption}

\begin{lemma} \label{lem:AA8impliesAA4}
Let \autoref{AA8} hold with $\hBar$ and true iterations defined in \autoref{def::true_iters}. Then \autoref{alg:sketching} satisfies \autoref{AA4} with $h(\epsilon, \alphaK) = \theta \hBar \bracket{ (1-\epS)^{1/2} \epsilon, \alphaK}$, where $\epS$ is defined in \eqref{eqn::JL}.
\end{lemma}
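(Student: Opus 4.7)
The plan is to verify the four pieces of Assumption \autoref{AA4} for the function $h(\epsilon,\alphaK) = \theta \bar{h}\bracket{(1-\epS)^{1/2}\epsilon, \alphaK}$ by chaining three inequalities: the sufficient decrease condition of \autoref{alg:sketching}, the model reduction bound of \autoref{AA8}, and the embedding inequality \eqref{eqn::JL} that defines true iterations.

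First I would fix a true and successful iteration $k$ with $k < \nEps$. By the definition of a successful iteration in Step~3 of \autoref{alg:sketching} together with \eqref{eqn::sufDecreaseSpec}, we have
\begin{equation*}
    f(x_k) - f(x_k + s_k) \geq \theta\bracket{\mKHat{0} - \mKHat{\hat{s}_k(\alphaK)}}.
\end{equation*}
Applying \autoref{AA8} to the right-hand side gives
\begin{equation*}
    f(x_k) - f(x_k + s_k) \geq \theta\,\bar{h}\bracket{\normTwo{\sKGradFK}, \alphaK}.
\end{equation*}

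Next I would use that iteration $k$ is true, so by \eqref{eqn::JL}, $\normTwo{\sKGradFK} \geq (1-\epS)^{1/2}\normTwo{\gradFK}$, and since $k < \nEps$, the definition \eqref{eqn::nEps} gives $\normTwo{\gradFK} > \epsilon$. Combining, $\normTwo{\sKGradFK} \geq (1-\epS)^{1/2}\epsilon$. Because $\bar{h}$ is non-decreasing in its first argument (by \autoref{AA8}), we conclude
\begin{equation*}
    f(x_k) - f(x_k + s_k) \geq \theta\,\bar{h}\bracket{(1-\epS)^{1/2}\epsilon, \alphaK} = h(\epsilon, \alphaK),
\end{equation*}
which is precisely \eqref{eqn::generic_model_decrease}.

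Finally, the structural requirements on $h$ in \autoref{AA4} transfer from $\bar{h}$ directly: since $\bar{h}$ is non-negative and non-decreasing in both arguments and $\theta > 0$, so is $h$; and for $\epsilon>0$ and $\alphaK>0$, we have $(1-\epS)^{1/2}\epsilon > 0$, so by the positivity clause in \autoref{AA8} we get $\bar{h}((1-\epS)^{1/2}\epsilon,\alphaK) > 0$, hence $h(\epsilon,\alphaK) > 0$. There is no real obstacle here; the whole argument is a direct substitution once one recognises that \eqref{eqn::JL} is the right tool to convert a bound in $\normTwo{\sKGradFK}$ into one in $\epsilon$, so the proof should be quite short.
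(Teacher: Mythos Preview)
Your proof is correct and follows exactly the same chain of inequalities as the paper: sufficient decrease \eqref{eqn::sufDecreaseSpec}, then \autoref{AA8}, then the embedding bound \eqref{eqn::JL}, then $k<\nEps$, using monotonicity of $\bar{h}$ in its first argument. You go slightly further than the paper by also checking the structural requirements on $h$ from \autoref{AA4}, which the paper's proof leaves implicit.
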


\begin{proof}
Let $k$ be a true and successful iteration with $k < \nEps$ for some $\epsilon>0$ where $\nEps$ is defined in \eqref{eqn::nEps}. Then, using the fact that the iteration is true, successful, \autoref{AA8} and $k<\nEps$, we have
\begin{align}
    \fK-\fKPlusOne 
    & \geq \theta \squareBracket{\mKHat{0} - \mKHat{\sHat_k(\alphaK)}}
    \geq \theta \hBar( \normTwo{\sKGradFK}, \alphaK) \notag\\
    & \geq \theta \hBar( \oneMiusEpsSToHalf \normTwo{\gradFK}, \alphaK) 
     \geq \theta \hBar( \oneMiusEpsSToHalf \epsilon, \alphaK). \notag
\end{align}
\vspace*{-0.2cm}
\end{proof}

The next Lemma is a standard result and we include its proof for completeness in Appendix \ref{BCGN:aux-app}. It will be needed later on, to show our random-subspace variants satisfy \autoref{AA3}.

\begin{lemma} \label{lem:Taylor}
Assume that the objective function $f$ in problem \eqref{eqn::fStar} is continuously differentiable with $L$-Lipschitz continuous gradient. Let
  \autoref{alg:sketching} be applied to \eqref{eqn::fStar}, where the choice of $B_k$ is such that $\normTwo{B_k} \leq \BMax$ for all $k$, and some constant $\BMax\geq 0$. Then 
\begin{equation}
    | f(x_k + s_k) - \mkHatSkHat| \leq \bracket{\frac{L+\BMax}{2}} \normTwo{S_k^T \hat{\sK}}^2, \label{eqn::Alg2Lipschitz}
\end{equation}
where $\sKHat \in \R^l$, $S_k\in \R^{l \times d}$ and $s_k = S_k^T \sKHat \in \R^d$.
\end{lemma}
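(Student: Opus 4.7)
The plan is straightforward: unpack the reduced model, rewrite it in terms of the full-dimensional step $s_k = S_k^T \hat{s}_k$, and then apply a standard Taylor-type bound from the Lipschitz gradient together with the bound on $B_k$.

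First I would observe that the reduced model in \eqref{eqn::mKHatSpec} can be rewritten purely in terms of $s_k \in \R^d$ rather than $\hat{s}_k \in \R^l$. Using adjoint properties,
$\innerProduct{S_k \nabla f(x_k)}{\hat{s}_k} = \innerProduct{\nabla f(x_k)}{S_k^T \hat{s}_k} = \innerProduct{\nabla f(x_k)}{s_k}$,
and similarly $\innerProduct{\hat{s}_k}{S_k B_k S_k^T \hat{s}_k} = \innerProduct{s_k}{B_k s_k}$. Hence
\begin{equation*}
\mkHatSkHat = f(x_k) + \innerProduct{\nabla f(x_k)}{s_k} + \tfrac{1}{2}\innerProduct{s_k}{B_k s_k}.
\end{equation*}

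Next I would invoke the standard consequence of $L$-Lipschitz continuity of $\nabla f$ (the descent lemma applied in both directions), which gives
\begin{equation*}
\left| f(x_k + s_k) - f(x_k) - \innerProduct{\nabla f(x_k)}{s_k} \right| \leq \tfrac{L}{2}\normTwo{s_k}^2.
\end{equation*}
Combining this with the previous display via the triangle inequality,
\begin{equation*}
\left| f(x_k + s_k) - \mkHatSkHat \right| \leq \tfrac{L}{2}\normTwo{s_k}^2 + \tfrac{1}{2}\left|\innerProduct{s_k}{B_k s_k}\right|.
\end{equation*}
The Cauchy--Schwarz inequality together with the assumption $\normTwo{B_k} \leq \BMax$ yields $|\innerProduct{s_k}{B_k s_k}| \leq \BMax \normTwo{s_k}^2$. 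Substituting $s_k = S_k^T \hat{s}_k$ then gives the claimed bound \eqref{eqn::Alg2Lipschitz}.

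There is no real obstacle here: the result is a purely deterministic, per-iteration Taylor/quadratic-error estimate, and the only slightly non-routine step is the bookkeeping that translates the reduced-space quantities $S_k \nabla f(x_k)$ and $S_k B_k S_k^T$ back into full-space inner products involving $s_k$, which is immediate from the transpose identity.
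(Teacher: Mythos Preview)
Your proposal is correct and follows essentially the same approach as the paper: apply the standard Lipschitz-gradient (descent-lemma) bound to control the first-order Taylor error, then use the triangle inequality together with $\normTwo{B_k}\leq\BMax$ to absorb the quadratic term. The only cosmetic difference is that you first rewrite the reduced model explicitly in terms of $s_k$ via adjoint identities, whereas the paper keeps the expressions in $\hat{s}_k$-form throughout; the underlying argument is identical.
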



\subsection{A random-subspace quadratic regularisation algorithm}
Here we present \autoref{alg:sketching_QR}, a random subspace quadratic regularisation method that uses sketching, which is a particular form of \autoref{alg:sketching} where the step is computed using a quadratic regularisation approach. We state the algorithm in a self-contained way, and then identify its similarities to \autoref{alg:sketching}.

\begin{algorithm}[H]
\begin{description}
 \item[Initialization] \ \\
 Choose a matrix distribution $\cal{S}$ of matrices $S \in \R^{l\times d}$. 
 Choose constants $\gamma_1\in (0,1)$, $\gamma_2 = \gammaOne^{-c}$, for some $c \in \N^+$, $l \in \N^+$, $\theta \in (0,1)$, 
 $\alpha_{\max}>0$ and $\kappaT\geq 0$.
  Initialize the algorithm by setting $x_0 \in \R^d$, $\alpha_0 = \alphaMax \gamma_1^p$ for some $p \in \N^+$ and $k=0$.

 \item[1. Compute a reduced model and a step] \ \\
 Draw a random matrix $S_k \in \R^{l \times d}$ from $\cal{S}$, and let
 \begin{align}
     \mkHatSHat = \fK + \innerProduct{\sKGradFK}{\sHat} + \frac{1}{2} \innerProduct{\sHat}{\sKBKSKT \sHat}
 \end{align}
 where $B_k \in \R^{d\times d}$ is a positive semi-definite user-chosen matrix (the choice $B_k=0$ is allowed).
 
 Compute $\sKHat$ by approximately minimising $\displaystyle\hat{q}_k(\hat{s}) = \mkHatSHat + \frac{1}{2\alphaK}\normTwo{S_k^T\sK}^2$ 
 such that the following two conditions hold
\begin{align}
    \normTwo{\grad \hat{q}_k(\hat{s}_k)} \leq \kappaT \normTwo{\SKTransposed \sKHat}, \label{eqn:QRskHatCond1}\\[1ex]
   \hat{q}_k(\hat{s})  \leq \hat{q}_k(0), \label{eqn:QRsKHatCond2}
\end{align}
 and\footnote{Note that $\hat{q}_k(0)=\hat{m}_k(0)=f(x_k)$.} set $\sK = S_k^T \sKHat$.

\item[2. Check sufficient decrease]\ \\  
Check the sufficient decrease condition 
\begin{equation}
    \fK - \fKPlusOne \geq \theta \squareBracket{\mKHat{0} - \mKHat{\hat{\sK}}}. \notag
\end{equation}

\item[3. Update the parameter $\alphaK$ and possibly take the trial step $\sK$]\ \\
If sufficient decrease is achieved, set $\xKPlusOne = \xK + \sK$ and $\alphaKPlusOne = \min \set{\alphaMax, \gammaTwo\alphaK}$ [successful]. \\
Otherwise set $\xKPlusOne = \xK$ and $\alphaKPlusOne = \gammaOne \alphaK$ [unsuccessful].\\
Increase the iteration count by setting $k=k+1$ in both cases.

\caption{\bf{A random-subspace quadratic regularisation algorithm using sketching}} \label{alg:sketching_QR} 
\end{description}
\end{algorithm}


 \autoref{alg:sketching_QR} is identical to \autoref{alg:sketching} apart from the details of the introduction of the regularized reduced model $\hat{q}_k$ and associated step calculation in the second part of Step 1; this also potentially requires the introduction of a 
user-chosen parameter $\kappaT$ that allows approximate subproblem solution.

We note that
\begin{equation}
    \mKHat{0} - \mKHat{\sKHat} =  \hat{q}_k(0)- \hat{q}_k(\hat{s}_k)+ \oneOverTwoAlphaK \normTwo{\SKTransposed\sKHat}^2 \geq \oneOverTwoAlphaK \normTwo{\SKTransposed\sKHat}^2\geq 0, \label{eqn:QRModelDecreaseLowerByStep}
\end{equation}
where we have used \eqref{eqn:QRsKHatCond2}; this implies that condition $\mKHat{0}\geq \mKHat{\sKHat}$ required in \autoref{alg:sketching} is implicitly achieved. Furthermore, 
\autoref{lem:QRAlpahLow} shows \autoref{alg:sketching_QR} satisfies \autoref{AA3}. 

\begin{lemma} \label{lem:QRAlpahLow}
Assume that the objective function $f$ in problem \eqref{eqn::fStar} is continuously differentiable with $L$-Lipschitz continuous gradient. Let
  \autoref{alg:sketching_QR} be applied to \eqref{eqn::fStar}, where the choice of $B_k$ is such that $\normTwo{B_k} \leq \BMax$ for all $k$, and some constant $\BMax\geq 0$. Then 
   \autoref{alg:sketching_QR} satisfies \autoref{AA3} with 
\begin{equation}
    \alphaLow = \frac{1-\theta}{L + \BMax}. \notag
\end{equation}
\end{lemma}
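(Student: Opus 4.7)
The plan is to establish Assumption \ref{AA3} directly by chaining two inequalities: the Taylor-type function/model mismatch bound from Lemma \ref{lem:Taylor}, and the model decrease lower bound \eqref{eqn:QRModelDecreaseLowerByStep}. The truth of the iteration and the pre-convergence condition $k<\nEps$ actually play no role in the argument for this particular algorithm; only the regularization structure matters. This is a standard feature of quadratic regularization: once the regularization parameter $1/\alpha_k$ dominates the effective Lipschitz constant, the step is automatically accepted.

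First, I would fix an arbitrary iteration $k$ with $\alphaK \leq \alphaLow = (1-\theta)/(L+\BMax)$ and invoke Lemma \ref{lem:Taylor} to write
\[
f(\xK + \sK) - \mkHatSkHat \leq \frac{L+\BMax}{2}\normTwo{\SKTransposedsKHat}^2.
\]
Next, I would invoke \eqref{eqn:QRModelDecreaseLowerByStep}, obtained from the approximate-minimization condition \eqref{eqn:QRsKHatCond2}, to get
\[
\mKHat{0} - \mKHat{\sKHat} \geq \frac{1}{2\alphaK}\normTwo{\SKTransposedsKHat}^2.
\]
Combining these with $\alphaK \leq (1-\theta)/(L+\BMax)$ gives
\[
\frac{L+\BMax}{2}\normTwo{\SKTransposedsKHat}^2 \leq (1-\theta)\cdot\frac{1}{2\alphaK}\normTwo{\SKTransposedsKHat}^2 \leq (1-\theta)\bracket{\mKHat{0}-\mKHat{\sKHat}}.
\]

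Finally, I would rearrange to conclude the sufficient decrease condition. Writing $f(\xK)-f(\xK+\sK) = [\mKHat{0}-\mKHat{\sKHat}] - [f(\xK+\sK)-\mkHatSkHat]$ (using $\mKHat{0} = f(\xK)$) and substituting the above bounds gives
\[
f(\xK)-f(\xK+\sK) \geq \bracket{\mKHat{0}-\mKHat{\sKHat}} - (1-\theta)\bracket{\mKHat{0}-\mKHat{\sKHat}} = \theta \bracket{\mKHat{0}-\mKHat{\sKHat}},
\]
which is exactly the sufficient decrease criterion in Step 2 of Algorithm \ref{alg:sketching_QR}. Hence iteration $k$ is successful, verifying Assumption \ref{AA3} with the claimed $\alphaLow$.

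There is no real obstacle here; the only subtlety is the edge case $\SKTransposedsKHat = 0$, in which both sides of the sufficient decrease inequality are zero (making the iteration trivially successful), so the chain of inequalities handles it without special treatment. The key conceptual point worth emphasizing in the write-up is that the dependence on $\epsilon$ and on truth of the iteration, which is allowed in the general statement of \autoref{AA3}, is not actually needed for the regularization variant --- $\alphaLow$ here depends only on the problem constants $L$, $\BMax$ and the algorithmic constant $\theta$.
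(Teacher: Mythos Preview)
Your proof is correct and uses the same two ingredients as the paper: the Taylor-type mismatch bound (\autoref{lem:Taylor}) and the regularized model-decrease inequality \eqref{eqn:QRModelDecreaseLowerByStep}. The paper packages these via the ratio $\rho_k = [f(\xK)-f(\xK+\sK)]/[\mKHat{0}-\mKHat{\sKHat}]$ and shows $|1-\rho_k|\leq 1-\theta$, whereas you manipulate the inequalities directly; the algebra is identical.

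One small point where your version is actually cleaner: the paper's ratio argument needs the denominator $\mKHat{0}-\mKHat{\sKHat}$ to be strictly positive, and the paper invokes the true-iteration hypothesis together with $k<\nEps$ (via the later bound \eqref{eqn:QRModelDecreaseLowerBySketchedGradient}) to guarantee this. By avoiding the ratio and handling the degenerate case $\SKTransposedsKHat=0$ directly, you correctly observe that neither the truth of the iteration nor $k<\nEps$ is genuinely needed for this lemma --- the conclusion holds for \emph{every} iteration with $\alphaK\leq\alphaLow$. This is a valid sharpening of the statement, though of course \autoref{AA3} only asks for the weaker conditional version.
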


\begin{proof}
Let $\epsilon>0$ and $k < \nEps$, and assume iteration $k$ is true with $\alphaK \leq \alphaLow$. Let
  $\rho_k = [f(x_k) - f(x_k + s_k)]/[\mKHat{0} - \mKHat{\sKHat}]$, which together with $\mKHat{0} = \fK$, implies
   $ \abs{1- \rho_k} 
    = \abs{f(\xK + \sK) - \mKHat{\sK}}/\abs{\mKHat{0}-\mKHat{\sKHat}}$. Thus 
    $$\abs{1- \rho_k} \leq \frac{\bracket{\frac{L+\BMax}{2}} \normTwo{\SKTransposedsKHat}^2}{\frac{1}{2\alpha_k} \normTwo{\SKTransposedsKHat}^2}
     \leq 1-\theta,$$ 
where the first inequality follows from \autoref{lem:Taylor} and \eqref{eqn:QRModelDecreaseLowerByStep}, and the second one from 
$\alphaK \leq \alphaLow$ and the definition of $\alphaLow$. 
The above equation implies that $\rho_k \geq \theta$ and therefore iteration $k$ is successful\footnote{For $\rho_k$ to be well-defined, we need the denominator to be strictly positive, which follows from \eqref{eqn:QRModelDecreaseLowerBySketchedGradient}.}. 
\end{proof}

The next Lemma shows that \autoref{alg:sketching_QR} satisfies \autoref{AA8}, and so also \autoref{AA4} due to \autoref{lem:AA8impliesAA4}.

\begin{lemma} \label{lem:qr:hbar_bound}
\autoref{alg:sketching_QR} satisfies \autoref{AA8} with 
\begin{equation}
    \barH{z_1}{z_2} = \frac{z_1^2}{2\alphaMax} \left[\sMax \bracket{\BMax + z_2^{-1}} + \kappaT\right]^{-2}, \label{eqn:QRHBarEq}
\end{equation}
where $\sMax$ is defined in \eqref{eqn::sMax}  and where the choice of $B_k$ is such that $\normTwo{B_k} \leq \BMax$ for all $k$, and some constant $\BMax\geq 0$. 
\end{lemma}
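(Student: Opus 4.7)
The plan is to start from the inequality \eqref{eqn:QRModelDecreaseLowerByStep}, which already gives
\begin{equation*}
\mKHat{0} - \mKHat{\sKHat} \geq \oneOverTwoAlphaK \normTwo{\SKTransposed \sKHat}^2,
\end{equation*}
and so the work reduces to lower-bounding $\normTwo{\SKTransposed \sKHat}$ by $\normTwo{\sKGradFK}$ up to the claimed factor. Since $\alphaK \leq \alphaMax$ implies $1/(2\alphaK) \geq 1/(2\alphaMax)$, the $\alphaMax$ in the denominator of $\bar{h}$ will come for free.

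To bound $\normTwo{\SKTransposed \sKHat}$ from below, I would first compute the gradient of the regularized reduced model,
\begin{equation*}
\grad \hat{q}_k(\sHat) = \sKGradFK + S_k B_k S_k^T \sHat + \tfrac{1}{\alphaK} S_k S_k^T \sHat,
\end{equation*}
and then invoke the approximate stationarity condition \eqref{eqn:QRskHatCond1}. The triangle inequality then gives
\begin{equation*}
\normTwo{\sKGradFK} \leq \normTwo{S_k B_k S_k^T \sKHat} + \tfrac{1}{\alphaK} \normTwo{S_k S_k^T \sKHat} + \kappaT \normTwo{\SKTransposed \sKHat}.
\end{equation*}
Bounding operator-norm products as $\normTwo{S_k B_k S_k^T \sKHat} \leq \normTwo{S_k}\normTwo{B_k}\normTwo{\SKTransposed \sKHat}$ and similarly for the second term, and using $\normTwo{S_k} \leq \sMax$ (valid on true iterations by \eqref{eqn::sMax}) and $\normTwo{B_k}\leq \BMax$, yields
\begin{equation*}
\normTwo{\sKGradFK} \leq \left[\sMax\bracket{\BMax + \alphaK^{-1}} + \kappaT\right] \normTwo{\SKTransposed \sKHat}.
\end{equation*}

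Substituting the resulting lower bound on $\normTwo{\SKTransposed \sKHat}^2$ into \eqref{eqn:QRModelDecreaseLowerByStep} and replacing $1/(2\alphaK)$ by the smaller $1/(2\alphaMax)$ produces exactly \eqref{eqn:QRHBarEq}. The main step to be careful about is justifying that the resulting $\bar h$ is non-negative and non-decreasing in both arguments, as required by \autoref{AA8}: non-negativity is clear since it is a nonnegative quantity squared times a positive scalar; monotonicity in $z_1$ is immediate from $z_1^2$, and monotonicity in $z_2$ follows because $z_2^{-1}$ decreases as $z_2$ grows, so the bracket decreases and its inverse square increases. No step in this argument looks hard; the only subtlety is keeping track of which quantities require the iteration to be \emph{true} (only the bound $\normTwo{S_k}\leq \sMax$ does) versus which hold unconditionally.
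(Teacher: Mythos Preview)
Your proposal is correct and follows essentially the same approach as the paper's own proof: compute $\grad \hat{q}_k(\sKHat)$, apply the triangle inequality together with the approximate stationarity condition \eqref{eqn:QRskHatCond1} and the bounds $\normTwo{S_k}\leq \sMax$, $\normTwo{B_k}\leq \BMax$ to obtain $\normTwo{\sKGradFK} \leq [\sMax(\BMax+\alphaK^{-1})+\kappaT]\,\normTwo{\SKTransposed\sKHat}$, then substitute into \eqref{eqn:QRModelDecreaseLowerByStep} and replace $1/(2\alphaK)$ by $1/(2\alphaMax)$. Your explicit check that $\bar h$ is non-negative and non-decreasing in each argument is a useful addition that the paper leaves implicit.
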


\begin{proof}
Let iteration $k$ be true. Using the definition of $\hat{q}_k$, we have 
   $ \grad \hat{q}_k(\hat{s}_k) = S_k \gradFK + S_k B_k S_k^T \sKHat + \oneOverAlphaK S_k S_k^T \sKHat$.
It follows that
\begin{align}
    \normTwo{S_k \gradFK} &= \normTwo{- S_k\bracket{ B_k + \oneOverAlphaK} S_k^T \sKHat + \grad \hat{q}_k(\hat{s}_k)} \notag\\
    & \leq \bracket{\sMax \bracket{\BMax + \oneOverAlphaK}}\normTwo{S_k^T \sKHat} + \normTwo{\grad \hat{q}_k(\hat{s}_k)} \notag\\
    & \leq \bracket{\sMax \bracket{\BMax + \oneOverAlphaK} + \kappaT} \normTwo{S_k^T \sKHat}, \label{eqn:QRSketchedGradientUpperBySketchedStep}
\end{align}
where we used $\normTwo{S_k}\leq \sMax$ on true iterations and $\normTwo{B_k} \leq \BMax$ to derive the first inequality and \eqref{eqn:QRskHatCond1} to derive the last inequality.
Therefore, using \eqref{eqn:QRModelDecreaseLowerByStep} and \eqref{eqn:QRSketchedGradientUpperBySketchedStep}, we have 
\begin{align}
     \mKHat{0} - \mKHat{\sKHat} &\geq \oneOverTwoAlphaK \normTwo{\SKTransposed\sKHat}^2 
     \geq \oneOverTwoAlphaK \bracket{\sMax \bracket{\BMax + \oneOverAlphaK} + \kappaT}^{-2} \normTwo{S_k \gradFK}^2 \notag\\
     &\geq \frac{1}{2\alphaMax} \bracket{\sMax \bracket{\BMax + \oneOverAlphaK} + \kappaT}^{-2} \normTwo{S_k \gradFK}^2,
     \label{eqn:QRModelDecreaseLowerBySketchedGradient}
\end{align}
satisfying \autoref{AA8}.
\end{proof}

\subsection{Iteration complexity of random-subspace quadratic regularisation methods}\label{iter_complexity_QR}
By specifying the choice of the random ensemble $\mathcal{S}$ that generates the random subspace in \autoref{alg:sketching_QR}, we can detail the complexity bounds even further, to capture their explicit dependence on the expressions we gave for $S_{max}$ and subspace dimension $l$ in the previous section. In addition to the three choices of matrices below, many other possibilities are allowed--such as sparse $s$-hashing matrices, orthogonal ensembles--but we do not detail them here for brevity. 

Applying \autoref{lem:AA8impliesAA4}, \autoref{lem:QRAlpahLow}, \autoref{lem:qr:hbar_bound} for \autoref{alg:sketching_QR}, we have that \autoref{AA3} and \autoref{AA4} are satisfied with
\begin{align}
    & \alphaLow = \frac{1-\theta}{L + \BMax} \notag \\
    & h(\epsilon, \alphaZero\gammaOne^{c+\newL}) = \theta \barH{(1 - \epS)^{1/2}\epsilon}{\alphaZero\gammaOne^{c+\newL}} 
     = \frac{\theta   (1-\epS)\epsilon^2}{2\alphaMax \bracket{\sMax \bracket{\BMax + \alphaZero^{-1}\gammaOne^{-c-\newL}} + \kappaT}^2} 
    \label{tmp-2022-1-13-2} 
\end{align}
Moreover, \autoref{AA5} is satisfied for \autoref{alg:sketching_QR}  by \autoref{lem:AA8impliesAA5}.
The following three subsections give complexity bounds for \autoref{alg:sketching_QR} with different random ensembles (whose properties are summarized in \autoref{tab:alg:sketching}). 

\subsubsection{Using scaled Gaussian matrices}
\autoref{alg:sketching_QR} with scaled Gaussian matrices of size $l=\mathcal{O}(1)$ as the choice for $S_k$ has, with high-probability, an iteration complexity $\mathO{\frac{d}{l}\epsilon^{-2}}$ to drive $\|\gradFK\|$ below $\epsilon$; the choice of the subspace dimension $l$ can be  a (small) (problem dimension-independent) constant (see \autoref{tab:alg:sketching}). 
\begin{theorem}\label{thm:complexity-QR-Gaussian}
Assume that the objective function $f$ in problem \eqref{eqn::fStar} is continuously differentiable with $L$-Lipschitz continuous gradient. 
Let $\deltaSTwo, \epS, \deltaOne>0$, $l\in \N^+$ be such that
\begin{equation*}
    \deltaS < \frac{c}{(c+1)^2}, \quad \nPreFactorTR >0,
\end{equation*}
where $\deltaS = e^{-l\epS^2/4} + \deltaSTwo$.
Apply \autoref{alg:sketching_QR} to \eqref{eqn::fStar}, where $B_k$ is such that $\normTwo{B_k} \leq \BMax$ for all $k$ and some constant $\BMax\geq 0$, and where $\cal{S}$ is the distribution of scaled Gaussian matrices (\autoref{def:Gaussian}).  Assume \autoref{alg:sketching_QR} runs for $N$ iterations such that
\begin{equation}
    N \geq  \nPreFactorTR \squareBracket{
         \fZeroMinusfStarOverH
         + \frac{\newL}{1+c}}, \notag
\end{equation}
where 
\begin{equation}
    h(\epsilon, \alphaZero\gammaOne^{c+\newL}) = \frac{\theta   (1-\epS)\epsilon^2}{ 2\alphaMax} \bracket{\squareBracket{1 + \sqrt{\frac{d}{l}} + \sqrt{2l^{-1}\logOneOverDeltaSTwo}}\bracket{\BMax + \alphaZero^{-1}\gammaOne^{-c-\newL}} + \kappaT}^{-2}  \notag
\end{equation}
and $\newL$ is given in \eqref{eqn:tauLDef}.
Then
\begin{equation}
    \probability{N \geq \nEps} \geq 1 - \chernoffLowerExponential, \notag
\end{equation}
where $\nEps$ is defined in \eqref{eqn::nEps}.
\end{theorem}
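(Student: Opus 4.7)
The plan is to deduce this theorem as a direct specialisation of \autoref{thm2}, by verifying that under the Gaussian sketching choice, Algorithm \ref{alg:sketching_QR} meets each of Assumptions \autoref{AA2}--\autoref{AA5} with the constants asserted in the statement. Once these are in place, the tail bound and complexity estimate follow immediately.

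First, I would verify \autoref{AA2}. Scaled Gaussian matrices fulfill \autoref{AA6} with $\deltaSOne = e^{-\epS^2 l / 4}$ for any $\epS \in (0,1)$ by \autoref{lem:GaussJLEmbedding}, and \autoref{AA7} with $\sMax = 1 + \sqrt{d/l} + \sqrt{2\logOneOverDeltaSTwo / l}$ for any $\deltaSTwo \in (0,1)$ by \autoref{Lem:GaussSMax}. Under the assumption $\deltaS = e^{-l\epS^2/4} + \deltaSTwo < c/(c+1)^2 < 1$, we have $\deltaSOne + \deltaSTwo < 1$, so \autoref{lem::deduceAA2} implies that \autoref{alg:sketching_QR} satisfies \autoref{AA2} with precisely this $\deltaS$.

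Next I would verify the remaining structural assumptions, which are independent of the sketching distribution. \autoref{AA5} is the content of \autoref{lem:AA8impliesAA5}. \autoref{AA3} is established by \autoref{lem:QRAlpahLow}, which yields $\alphaLow = (1-\theta)/(L+\BMax)$; this in turn defines $\newL$ and $\alphaMin = \alphaZero\gammaOne^\newL$ via \autoref{lem::alphaMin}. \autoref{AA8} is shown to hold by \autoref{lem:qr:hbar_bound} with $\barH{z_1}{z_2}$ as in \eqref{eqn:QRHBarEq}, so \autoref{lem:AA8impliesAA4} gives \autoref{AA4} with
\begin{equation*}
h(\epsilon, \alphaK) = \theta\,\barH{\oneMiusEpsSToHalf\epsilon}{\alphaK}.
\end{equation*}
Plugging in $\alphaK = \alphaZero\gammaOne^{c+\newL}$ and substituting the Gaussian value of $\sMax$ produces exactly the expression for $h(\epsilon, \alphaZero\gammaOne^{c+\newL})$ displayed in the theorem statement.

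With \autoref{AA2}--\autoref{AA5} in hand, and with the hypotheses $\deltaS < c/(c+1)^2$ and $g(\deltaS,\deltaOne) > 0$ matching the conditions \eqref{eqn::deltaSConditionThmTwo} and \eqref{eqn:tmp32} of \autoref{thm2}, every hypothesis of \autoref{thm2} is satisfied. Invoking that theorem with the specified $h$ yields the conclusion $\probability{N \geq \nEps} \geq 1 - \chernoffLowerExponential$ as soon as $N$ meets the stated lower bound \eqref{eqn::n_upper_2}. The only real bookkeeping issue, and the place where care is needed, is checking that the explicit $h$ derived from \eqref{eqn:QRHBarEq} with the Gaussian $\sMax$ substituted in coincides termwise with the formula written in the theorem; this is a routine algebraic substitution with no further probabilistic content.
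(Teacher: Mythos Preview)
Your proposal is correct and follows essentially the same approach as the paper: verify \autoref{AA2} via \autoref{lem:GaussJLEmbedding}, \autoref{Lem:GaussSMax} and \autoref{lem::deduceAA2}, verify \autoref{AA3}--\autoref{AA5} via \autoref{lem:QRAlpahLow}, \autoref{lem:qr:hbar_bound}, \autoref{lem:AA8impliesAA4} and \autoref{lem:AA8impliesAA5}, substitute the Gaussian $\sMax$ into \eqref{tmp-2022-1-13-2}, and invoke \autoref{thm2}. The paper's own proof is simply a terser version of what you wrote, referring back to the preceding discussion rather than naming each lemma explicitly.
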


\begin{proof}
We note that \autoref{alg:sketching_QR} is a particular variant of \autoref{alg:generic}, and so \autoref{thm2} applies. Moreover, \autoref{AA3}, \autoref{AA4} and \autoref{AA5} are satisfied conform our earlier discussions. Applying \autoref{lem::deduceAA2}, \autoref{lem:GaussJLEmbedding} and \autoref{Lem:GaussSMax} for scaled Gaussian matrices, \autoref{AA2} is satisfied with 
\begin{align*}
    & \sMax = 1 + \sqrt{\frac{d}{l}} + \sqrt{2l^{-1}\logOneOverDeltaSTwo} \\
    & \deltaS =  e^{-\epS^2 l/ 4} + \deltaSTwo.
\end{align*}
Applying \autoref{thm2} and substituting the above expression of $\sMax$  in \eqref{tmp-2022-1-13-2} gives the desired result.
\end{proof}

\subsubsection{Using stable $1$-hashing matrices}
\autoref{alg:sketching_QR} with stable $1$-hashing matrices of size $l=\mathcal{O}(1)$ has, with high-probability, an iteration complexity $\mathO{\frac{d}{l}\epsilon^{-2}}$ to drive $\|\gradFK\|$ below $\epsilon$; the choice of the subspace dimension $l$ can be a (small) (problem dimension-independent) constant (see \autoref{tab:alg:sketching}). 

\begin{theorem}\label{thm:complexity-QR-stable-1-hashing}
Assume that the objective function $f$ in problem \eqref{eqn::fStar} is continuously differentiable with $L$-Lipschitz continuous gradient. 
Let $\deltaOne>0$, $\epS \in (0,3/4)$, $l\in \N^+$  be such that
\begin{equation*}
    \deltaS < \frac{c}{(c+1)^2}, \quad \nPreFactorTR >0,
\end{equation*}
where $\deltaS = e^{-\frac{l(\epS-1/4)^2}{C_3}}$ and $C_3$ is defined in \autoref{lem:tmp:2022-1-13-1}.
Apply \autoref{alg:sketching_QR} to \eqref{eqn::fStar}, where $B_k$ is such that $\normTwo{B_k} \leq \BMax$ for all $k$ and some constant $\BMax\geq 0$, and where $\cal{S}$ is the distribution of scaled stable 1-hashing matrices (\autoref{def::stable-1-hashing}).  Assume \autoref{alg:sketching_QR} runs for $N$ iterations such that
\begin{equation}
 N \geq  \nPreFactorTR \squareBracket{
         \fZeroMinusfStarOverH
         + \frac{\newL}{1+c}}, \notag
\end{equation}
where 
\begin{equation}
    h(\epsilon, \alphaZero\gammaOne^{c+\newL}) = \frac{\theta   (1-\epS)\epsilon^2}{ 2\alphaMax} \bracket{\sqrt{\ceil{d/l}}\bracket{\BMax + \alphaZero^{-1}\gammaOne^{-c-\newL}} + \kappaT}^{-2}  \notag
\end{equation}
and $\newL$ is given in \eqref{eqn:tauLDef}.
Then, we have 
\begin{equation}
    \probability{N \geq \nEps} \geq 1 - \chernoffLowerExponential, \notag
\end{equation}
where $\nEps$ is defined in \eqref{eqn::nEps}.
\end{theorem}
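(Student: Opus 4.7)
The plan is to mirror the proof of \autoref{thm:complexity-QR-Gaussian} exactly, substituting the properties of stable $1$-hashing matrices (\autoref{lem:tmp:2022-1-13-1} and \autoref{lem:stable-1-hashing-SMax}) for the Gaussian properties (\autoref{lem:GaussJLEmbedding} and \autoref{Lem:GaussSMax}). The overall structure is: (i) verify that \autoref{alg:sketching_QR} is a particular instance of \autoref{alg:generic}, so that \autoref{thm2} is applicable; (ii) verify that \autoref{AA2}-\autoref{AA5} hold with the relevant constants; (iii) invoke \autoref{thm2} and simplify the resulting $h$.

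First I would verify \autoref{AA3}, \autoref{AA4} and \autoref{AA5} exactly as in the Gaussian case, since none of these depend on the sketching ensemble. Namely, \autoref{lem:AA8impliesAA5} gives \autoref{AA5}, \autoref{lem:QRAlpahLow} gives \autoref{AA3} with $\alphaLow = (1-\theta)/(L+\BMax)$, and \autoref{lem:qr:hbar_bound} together with \autoref{lem:AA8impliesAA4} gives \autoref{AA4} with $h$ of the form \eqref{tmp-2022-1-13-2}, parametrized by $\sMax$ and $\epS$ that are yet to be specialised to the stable $1$-hashing case.

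Next I would verify \autoref{AA2} via \autoref{lem::deduceAA2}, which requires checking \autoref{AA6} and \autoref{AA7}. The hypothesis $\epS \in (0, 3/4)$ (together with the upper bound on $\deltaS$ forcing $e^{-l(\epS-1/4)^2/C_3} < 1/2$) makes \autoref{lem:tmp:2022-1-13-1} applicable, yielding \autoref{AA6} with $\deltaSOne = e^{-l(\epS-1/4)^2/C_3}$. \autoref{lem:stable-1-hashing-SMax} provides \autoref{AA7} with $\deltaSTwo = 0$ and $\sMax = \sqrt{\ceil{d/l}}$. Therefore $\deltaS = \deltaSOne + \deltaSTwo = e^{-l(\epS-1/4)^2/C_3}$ as stated in the theorem.

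Finally I would apply \autoref{thm2}: the conditions \eqref{eqn::deltaSConditionThmTwo} and \eqref{eqn:tmp32} are exactly the hypotheses of the present theorem, and substituting $\sMax = \sqrt{\ceil{d/l}}$ into \eqref{tmp-2022-1-13-2} (with $B_k$ bounded in norm by $\BMax$) produces the stated $h(\epsilon, \alphaZero\gammaOne^{c+\newL})$. The conclusion $\probability{N \geq \nEps} \geq 1 - \chernoffLowerExponential$ then follows directly from \autoref{thm2}. There is no substantive obstacle here: unlike the Gaussian case, where $\deltaSTwo$ must be chosen separately because the Gaussian norm bound is only probabilistic, for stable $1$-hashing the norm bound is deterministic, so the proof is if anything slightly cleaner; the only care required is in tracking that $\epS \in (0, 3/4)$ rather than $(0,1)$ because of the $\bar{\epsilon}^2 - \bar{\epsilon} \geq -1/4$ manipulation inside \autoref{lem:tmp:2022-1-13-1}.
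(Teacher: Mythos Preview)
Your proposal is correct and follows essentially the same approach as the paper: verify \autoref{AA2} via \autoref{lem::deduceAA2} using \autoref{lem:tmp:2022-1-13-1} and \autoref{lem:stable-1-hashing-SMax} to obtain $\sMax=\sqrt{\ceil{d/l}}$ and $\deltaS=e^{-l(\epS-1/4)^2/C_3}$, then apply \autoref{thm2} with the $h$ from \eqref{tmp-2022-1-13-2}. Your write-up is simply a more explicit unpacking of the paper's terse two-sentence proof, including the helpful observation that $\deltaS<c/(c+1)^2\le 1/4$ ensures the hypothesis of \autoref{lem:tmp:2022-1-13-1} is met.
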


\begin{proof}
Applying \autoref{lem::deduceAA2}, \autoref{lem:tmp:2022-1-13-1} and \autoref{lem:stable-1-hashing-SMax} for stable 1-hashing matrices, \autoref{AA2} is satisfied with 
    $\sMax = \sqrt{\ceil{d/l}}$ and
    $\deltaS =  e^{-\frac{l(\epS-1/4)^2}{C_3}}$.
Applying \autoref{thm2} and substituting the expression of $\sMax$ above in \eqref{tmp-2022-1-13-2} gives the desired result. 
\end{proof}

\subsubsection{Using sampling matrices}
\autoref{alg:sketching_QR} with scaled sampling matrices of size $l$ has, with high-probability, an iteration complexity 
$\mathO{\frac{d}{l}\epsilon^{-2}}$ to drive $\|\gradFK\|$ below $\epsilon$. However, unlike in the above two cases, here $l$ depends on the problem dimension $d$ and a problem-dependent constant $\nu$ that reflects how similar in magnitude the entries in $\nabla f(x)$ are (see \autoref{tab:alg:sketching}). If $\nu = \mathO{1/d}$ (and so these entries are similar in size), then $l=\mathcal{O}(1)$ in a problem dimension-independent way; else, indeed, we need to choose $l$ proportional to $d$.

\begin{theorem}\label{thm:complexity-QR-sampling}
Assume that the objective function $f$ in problem \eqref{eqn::fStar} is continuously differentiable with $L$-Lipschitz continuous gradient. 
Let $\deltaOne>0$, $\epS \in (0,1)$, $l\in \N^+$ be such that
\begin{equation*}
    \deltaS < \frac{c}{(c+1)^2}, \quad \nPreFactorTR >0,
\end{equation*}
where $\deltaS =e^{- \frac{\epS^2 l}{2d\nu^2}}$ and $\nu$ is defined in \eqref{eq:nu_def}.
Apply \autoref{alg:sketching_QR} to \eqref{eqn::fStar}, where $B_k$ is such that $\normTwo{B_k} \leq \BMax$ for all $k$ and some constant $\BMax\geq 0$, and where $\cal{S}$ is the distribution of scaled sampling matrices (\autoref{def:sampling}).  Assume \autoref{alg:sketching_QR} runs for $N$ iterations such that
\begin{equation}
    N \geq  \nPreFactorTR \squareBracket{
         \fZeroMinusfStarOverH
         + \frac{\newL}{1+c}}, \notag
\end{equation}
where 
\begin{equation}
    h(\epsilon, \alphaZero\gammaOne^{c+\newL}) = \frac{\theta   (1-\epS)\epsilon^2}{ 2\alphaMax} \bracket{\sqrt{d/l}\bracket{\BMax + \alphaZero^{-1}\gammaOne^{-c-\newL}} + \kappaT}^{-2}  \notag
\end{equation}
and $\newL$ is given in \eqref{eqn:tauLDef}.
Then, we have 
\begin{equation}
    \probability{N \geq \nEps} \geq 1 - \chernoffLowerExponential, \notag
\end{equation}
where $\nEps$ is defined in \eqref{eqn::nEps}.
\end{theorem}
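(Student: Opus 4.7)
The plan is to mirror the proof template used for \autoref{thm:complexity-QR-Gaussian} and \autoref{thm:complexity-QR-stable-1-hashing}, substituting the sampling-specific bounds. The strategy is to observe that \autoref{alg:sketching_QR} is a concrete instance of the generic framework \autoref{alg:generic}, and hence once \autoref{AA2}, \autoref{AA3}, \autoref{AA4}, \autoref{AA5} are verified with the appropriate constants, \autoref{thm2} applies directly and yields the claimed high-probability complexity bound.

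First, I would note that \autoref{AA3} holds with $\alphaLow = (1-\theta)/(L+\BMax)$ by \autoref{lem:QRAlpahLow}, \autoref{AA4} holds by combining \autoref{lem:AA8impliesAA4} and \autoref{lem:qr:hbar_bound} (so that $h(\epsilon,\alphaK) = \theta \barH{(1-\epS)^{1/2}\epsilon}{\alphaK}$ with $\barH{\cdot}{\cdot}$ given in \eqref{eqn:QRHBarEq}), and \autoref{AA5} holds by \autoref{lem:AA8impliesAA5}. These three verifications are independent of the choice of random ensemble and so transfer unchanged from the proofs of the earlier two theorems.

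The sampling-specific step is the verification of \autoref{AA2}. Here I would invoke \autoref{lem:sampling:non-uniformity-BCGN} to obtain that \autoref{AA6} holds with $\deltaSOne = e^{-\epS^2 l/(2d\nu^2)}$, and \autoref{tmp-2022-1-14-12} to obtain that \autoref{AA7} holds with $\deltaSTwo = 0$ and $\sMax = \sqrt{d/l}$. Applying \autoref{lem::deduceAA2}, \autoref{AA2} is then satisfied with
\begin{equation*}
\deltaS = \deltaSOne + \deltaSTwo = e^{-\epS^2 l/(2d\nu^2)},
\end{equation*}
which matches the hypothesis of the theorem. The hypotheses $\deltaS < c/(c+1)^2$ and $\gDeltaSDeltaOne > 0$ are precisely conditions \eqref{eqn::deltaSConditionThmTwo} and \eqref{eqn:tmp32} of \autoref{thm2}.

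Finally, I would apply \autoref{thm2} to conclude that if $N$ satisfies \eqref{eqn::n_upper_2} (which is exactly the lower bound on $N$ in the statement), then $\probability{N \geq \nEps} \geq 1 - \chernoffLowerExponential$. The only remaining bookkeeping is to substitute $\sMax = \sqrt{d/l}$ into the generic expression \eqref{tmp-2022-1-13-2} for $h(\epsilon,\alphaZero\gammaOne^{c+\newL})$, which immediately gives the stated formula. There is no real obstacle: the result is a direct specialisation, and the only slightly non-routine ingredient is keeping track of the fact that for sampling matrices $\deltaSTwo = 0$ so the entire probability budget $\deltaS$ is used by the JL-type embedding failure, which is what forces the appearance of the problem-dependent non-uniformity constant $\nu$ inside the definition of $\deltaS$.
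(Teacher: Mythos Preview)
Your proposal is correct and follows essentially the same approach as the paper's own proof. The paper's proof is terser because the ensemble-independent verifications of \autoref{AA3}, \autoref{AA4}, \autoref{AA5} are already established in the preamble to Section~\ref{iter_complexity_QR} (culminating in \eqref{tmp-2022-1-13-2}), so the paper only spells out the sampling-specific step of invoking \autoref{lem::deduceAA2}, \autoref{lem:sampling:non-uniformity-BCGN}, and \autoref{tmp-2022-1-14-12} to obtain $\sMax=\sqrt{d/l}$ and $\deltaS=e^{-\epS^2 l/(2d\nu^2)}$, then applies \autoref{thm2} and substitutes $\sMax$ into \eqref{tmp-2022-1-13-2}; your write-up simply makes those earlier steps explicit.
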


\begin{proof}
Applying \autoref{lem::deduceAA2}, \autoref{lem:sampling:non-uniformity-BCGN} and \autoref{tmp-2022-1-14-12} for scaled sampling matrices, \autoref{AA2} is satisfied with 
$\sMax = \sqrt{d/l}$ and 
    $\deltaS = e^{- \frac{\epsilon^2 l}{2d\nu^2}}$.
Applying \autoref{thm2} and substituting the expression of $\sMax$ above in \eqref{tmp-2022-1-13-2} gives the desired result.
\end{proof}

\begin{remark}
The dependency on $\epsilon$ in the iteration complexity matches that of the full-dimensional quadratic regularisation method. 
Note that for each ensemble considered, there is dimension-dependence in the iteration bound of the form $\frac{d}{l}$; but not in the size of the sketching projection in the case of Gaussian and certain sparse ensembles.
We may eliminate the dependence on $d$ in the iteration complexity bound by fixing the ratio $\frac{d}{l}$ to be a constant, so that $d$ and $l$
grow proportionally.
\end{remark}

\subsection{A random-subspace trust region method}
Here we present a  random-subspace trust region method with sketching, \autoref{alg:sketching_TR}, which is a particular form of \autoref{alg:sketching} where the step is computed using a trust region approach. The general structure of this section and the main results mirror those in the previous subsection on quadratic regularization; we include them here in order to illustrate our framework using another state of the art strategy and so that the precise details and constants can be given in full.

\begin{algorithm}[H]
\begin{description}
 \item[Initialization] \ \\
 Choose a matrix distribution $\cal{S}$ of matrices $S \in \R^{l\times d}$. 
 Choose constants $\gamma_1\in (0,1)$, $\gamma_2 = \gammaOne^{-c}$, for some $c \in \N^+$, $l \in \N^+$, $\theta \in (0,1)$ and $\alpha_{\max}>0$.
 Initialize the algorithm by setting $x_0 \in \R^d$, $\alpha_0 = \alphaMax \gamma_1^p$ for some $p \in \N^+$ and $k=0$.
 \item[1. Compute a reduced model and a step]\  \\
 Draw a random matrix $S_k \in \R^{l \times d}$ from $\cal{S}$, and let
 \begin{align*}
     \mkHatSHat = \fK + \innerProduct{\sKGradFK}{\sHat} + \frac{1}{2} \innerProduct{\sHat}{\sKBKSKT \sHat}
 \end{align*}
 where $B_k \in \R^{d\times d}$ is a positive semi-definite user-chosen matrix (the choice $B_k=0$ is allowed).\\
 Compute $\sKHat$ by approximately minimising $\mkHatSHat$ such that for some $C_7>0$,
\begin{align}
    &\normTwo{\sKHat} \leq \alphaK\quad {\rm and} \label{tmp-2021-12-31-4} \\
    &\mKHat{0} - \mKHat{\sKHat} \geq C_7 \normTwo{S_k \gradFK} \min \set{\alphaK, \frac{\normTwo{S_k \gradFK}}{\normTwo{B_k}}}, \label{eqn::TRStepComputationCOndition}
\end{align}
and set $\sK = S_k^T \sKHat$.
\item[2. Check sufficient decrease]\ \\  
Check the sufficient decrease condition 
\begin{equation}
    \fK - \fKPlusOne \geq \theta \squareBracket{\mKHat{0} - \mKHat{\hat{\sK}}}. \notag
\end{equation}
\item[3. Update the parameter $\alphaK$ and possibly take the trial step $\sK$]\ \\
If sufficient decrease is achieved, set $\xKPlusOne = \xK + \sK$ and $\alphaKPlusOne = \min \set{\alphaMax, \gammaTwo\alphaK}$ [successful]. \\
Otherwise set $\xKPlusOne = \xK$ and $\alphaKPlusOne = \gammaOne \alphaK$ [unsuccessful].\\
Increase the iteration count by setting $k=k+1$ in both cases.\\
\caption{\bf{A random-subspace trust region algorithm using sketching}} \label{alg:sketching_TR} 
\end{description}
\end{algorithm}

 \autoref{alg:sketching_TR} is   \autoref{alg:sketching} with full details of the calculation of the reduced step $\hat{s}_k$, its ambient-space projection $s_k$ and definition of the parameter $\alpha_k$.

\begin{remark}
Lemma 4.3 in \cite{Nocedal:2006uv} shows that there  exists $\hat{s_k} \in \R^l$ such that \eqref{eqn::TRStepComputationCOndition} holds. 
In particular, letting the model gradient $\gK = \sKGradFK$,
if $\gK=0$, we set $\hat{s_k} = 0$; otherwise we may let $\sKHat$ be the Cauchy point (that is, the point where the model $\hat{m}_k$ is minimised in the negative model gradient direction within the trust region), which can be easily computed.
\end{remark}

\autoref{tmp-2021-12-31-5} shows that \autoref{alg:sketching_TR} satisfies \autoref{AA3}.

\begin{lemma} \label{tmp-2021-12-31-5}
Assume that the objective function $f$ in problem \eqref{eqn::fStar} is continuously differentiable with $L$-Lipschitz continuous gradient. Let
  \autoref{alg:sketching_TR} be applied to \eqref{eqn::fStar}, where the choice of $B_k$ is such that $\normTwo{B_k} \leq \BMax$ for all $k$, and some constant $\BMax\geq 0$.
Then \autoref{alg:sketching_TR} satisfies \autoref{AA3} with
\begin{equation}
    \alphaLow = \oneMiusEpsSToHalf \epsilon \min \bracket{ \frac{C_7 (1-\theta)}{(\lPlusHalfBmax)\sMax^2}, \frac{1}{\BMax} }.\label{eqn::TRalphaLowExpression}
\end{equation}
\end{lemma}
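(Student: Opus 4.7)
The plan is to mirror the proof of \autoref{lem:QRAlpahLow}, but now accounting for the trust-region step constraint \eqref{tmp-2021-12-31-4} and the Cauchy-like model decrease \eqref{eqn::TRStepComputationCOndition}. Fix $\epsilon > 0$ and let $k < \nEps$ be a true iteration with $\alphaK \leq \alphaLow$, where $\alphaLow$ is as in \eqref{eqn::TRalphaLowExpression}. Since the iteration is true and $k < \nEps$, combining \eqref{eqn::JL} with $\normTwo{\gradFK} > \epsilon$ gives the key lower bound $\normTwo{\sKGradFK} \geq \oneMiusEpsSToHalf \epsilon$, which will play the role that $\epsilon$ alone plays in the full-dimensional case.

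Next I would show that the minimum in \eqref{eqn::TRStepComputationCOndition} is attained at $\alphaK$. Because $\alphaLow \leq \oneMiusEpsSToHalf \epsilon / \BMax$, we get $\alphaK \leq \oneMiusEpsSToHalf \epsilon / \BMax \leq \normTwo{\sKGradFK}/\normTwo{B_k}$ (using $\normTwo{B_k} \leq \BMax$), so \eqref{eqn::TRStepComputationCOndition} yields
\begin{equation*}
    \mKHat{0} - \mKHat{\sKHat} \geq C_7 \normTwo{\sKGradFK}\, \alphaK \geq C_7 \oneMiusEpsSToHalf \epsilon\, \alphaK.
\end{equation*}

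On the other hand, \autoref{lem:Taylor} combined with $\normTwo{\SKTransposed \sKHat} \leq \normTwo{S_k}\normTwo{\sKHat} \leq \sMax \alphaK$ (via \eqref{eqn::sMax} on true iterations and \eqref{tmp-2021-12-31-4}) gives
\begin{equation*}
    \abs{f(\xK + \sK) - \mKHat{\sKHat}} \leq (\lPlusHalfBmax)\sMax^2 \alphaK^2.
\end{equation*}
Setting $\rho_k = [f(\xK) - f(\xK + \sK)]/[\mKHat{0} - \mKHat{\sKHat}]$ and using $\mKHat{0} = f(\xK)$, these two inequalities combine to
\begin{equation*}
    \abs{1 - \rho_k} \leq \frac{(\lPlusHalfBmax)\sMax^2 \alphaK}{C_7 \oneMiusEpsSToHalf \epsilon} \leq 1 - \theta,
\end{equation*}
where the last step uses $\alphaK \leq \alphaLow \leq \oneMiusEpsSToHalf \epsilon \cdot C_7 (1-\theta)/[(\lPlusHalfBmax)\sMax^2]$, i.e. the other branch of the minimum in \eqref{eqn::TRalphaLowExpression}. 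Hence $\rho_k \geq \theta$, the sufficient-decrease test in Step 2 passes, and the iteration is successful, as required by \autoref{AA3}.

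The only subtle point is that the step-size bound must simultaneously (i) force the min in the Cauchy decrease to be the $\alphaK$ branch, and (ii) make the Taylor remainder small compared to the model decrease. This is exactly the reason $\alphaLow$ is written as a minimum of two terms, one per condition; everything else is bookkeeping. A minor caveat worth flagging in the write-up is the degenerate case $\BMax = 0$, where the second branch in \eqref{eqn::TRalphaLowExpression} should be interpreted as $+\infty$ (and the min in \eqref{eqn::TRStepComputationCOndition} is automatically $\alphaK$), so the argument goes through unchanged.
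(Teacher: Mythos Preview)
Your proof is correct and follows essentially the same approach as the paper's. The only cosmetic difference is that you first argue the minimum in \eqref{eqn::TRStepComputationCOndition} equals $\alphaK$ and then bound $|1-\rho_k|$, whereas the paper carries the $\min(\alphaK,\normTwo{\sKGradFK}/\normTwo{B_k})$ through the chain of inequalities and only resolves it at the final step; the logic and the use of both branches of \eqref{eqn::TRalphaLowExpression} are identical.
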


\begin{proof}
Let $\epsilon>0$ and $k < \nEps$, and assume iteration $k$ is true with $\alphaK \leq \alphaLow$, define
    $\rho_k = [f(x_k) - f(x_k + s_k)]/[\mKHat{0} - \mKHat{\sKHat}]$.
Then we have
\begin{align}
    \abs{1-\rho_k} 
    & \leq \frac{(\lPlusHalfBmax)\normTwo{\SKTransposedsKHat}^2}{C_7 \normTwo{\sKGradFK}\min \bracket{\alphaK, \frac{\normTwo{\sKGradFK}}{\normTwo{B_k}}}}
    \leq \frac{(\lPlusHalfBmax) \sMax^2 \alphaK^2}{C_7 \normTwo{\sKGradFK}\min \bracket{\alphaK, \frac{\normTwo{\sKGradFK}}{\normTwo{B_k}}}} 
    \notag\\
    & \leq \frac{(\lPlusHalfBmax)\sMax^2\alphaK^2}{C_7 \oneMiusEpsSToHalf\epsilon \min \bracket{\alphaK, \frac{\oneMiusEpsSToHalf\epsilon}{\BMax}}}  \leq 1 - \theta,\notag
\end{align}
where the first inequality follows from \eqref{eqn::TRStepComputationCOndition} and \autoref{lem:Taylor}, the second inequality follows from \eqref{eqn::sMax} and $\normTwo{\sKHat} \leq \alphaK$, the third inequality follows from \eqref{eqn::JL} and the fact that $\gradFK>\epsilon$ for $k<\nEps$, while the last inequality follows from $\alphaK \leq \alphaLow$ and \eqref{eqn::TRalphaLowExpression}. It follows then that $\rho_k \geq \theta$ and so the iteration $k$ is successful\footnote{Note that for $k$ being a true iteration with $k<\nEps$, \eqref{eqn::TRStepComputationCOndition} along with \eqref{eqn::JL}, $\alphaK>0$ gives $\mKHat{0}-\mKHat{\sKHat}>0$ so that $\rho_k$ is well defined.}. 
\end{proof}

The next lemma shows that \autoref{alg:sketching_TR} satisfies \autoref{AA8}, thus satisfying \autoref{AA4}.

\begin{lemma}\label{tmp-2022-1-13-5}
\autoref{alg:sketching_TR} satisfies \autoref{AA8} with 
\begin{equation}
    \barH{z_1}{z_2} = C_7\min \bracket{z_1 z_2, z_1^2/\BMax},\notag
\end{equation}
where the choice of $B_k$ is such that $\normTwo{B_k} \leq \BMax$ for all $k$, and some constant $\BMax\geq 0$. 
\end{lemma}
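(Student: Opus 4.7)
The proof is essentially a direct reading of the Cauchy-like sufficient decrease condition \eqref{eqn::TRStepComputationCOndition} built into the step computation of \autoref{alg:sketching_TR}. My plan is to fix an arbitrary true iteration $k$, start from \eqref{eqn::TRStepComputationCOndition}, and replace $\normTwo{B_k}$ in the denominator of the inner $\min$ by the uniform upper bound $\BMax$. Since $\normTwo{B_k} \leq \BMax$ by the standing hypothesis on $B_k$, we have $\normTwo{\sKGradFK}/\normTwo{B_k} \geq \normTwo{\sKGradFK}/\BMax$, and therefore
\begin{equation*}
\mKHat{0} - \mKHat{\sKHat} \;\geq\; C_7 \normTwo{\sKGradFK} \min\!\left\{\alphaK,\, \frac{\normTwo{\sKGradFK}}{\BMax}\right\} \;=\; C_7 \min\!\left\{\normTwo{\sKGradFK}\alphaK,\, \frac{\normTwo{\sKGradFK}^2}{\BMax}\right\},
\end{equation*}
which is exactly $\barH{\normTwo{\sKGradFK}}{\alphaK}$ with the claimed definition.

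To complete the verification of \autoref{AA8}, I will separately confirm the structural requirements on $\bar h$, namely that it is non-negative and non-decreasing in each argument on $[0,\infty)^2$. Non-negativity is immediate since both $z_1 z_2$ and $z_1^2/\BMax$ are non-negative when $z_1, z_2 \geq 0$. For monotonicity in $z_1$, both expressions inside the $\min$ are non-decreasing in $z_1$ (for $z_1 \geq 0$), and the pointwise $\min$ of non-decreasing functions is non-decreasing; for monotonicity in $z_2$, the first expression is non-decreasing in $z_2$ while the second is constant in $z_2$, so again the $\min$ is non-decreasing.

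There is no real obstacle here: the only subtlety is to note that the bound $\normTwo{B_k} \leq \BMax$ is a standing hypothesis of the lemma (not part of the definition of a true iteration), so it can be invoked on every iteration, and in particular on every true iteration, which is what \autoref{AA8} requires. Combining this lemma with \autoref{lem:AA8impliesAA4} then yields \autoref{AA4} for \autoref{alg:sketching_TR} with $h(\epsilon,\alphaK) = \theta\, C_7 \min\{\oneMiusEpsSToHalf \epsilon\, \alphaK,\, (1-\epS)\epsilon^2/\BMax\}$, which is the form needed for the trust-region iteration complexity bound in the style of \autoref{thm:complexity-QR-Gaussian}.
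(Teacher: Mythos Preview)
Your proof is correct and follows exactly the same approach as the paper: the paper's proof is the one-liner ``Use \eqref{eqn::TRStepComputationCOndition} with $\normTwo{B_k} \leq \BMax$,'' and you have spelled this out. Your additional verification that $\bar h$ is non-negative and non-decreasing in each argument is a welcome completeness check that the paper omits.
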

\begin{proof}
Use \eqref{eqn::TRStepComputationCOndition} with $\normTwo{B_k} \leq \BMax$. 
\end{proof}

\subsection{Iteration complexity of random-subspace trust region methods}

Here we derive complexity results for three concrete implementations of \autoref{alg:sketching_TR} that use different random ensembles. The exposition follows closely that in Section \ref{iter_complexity_QR},  and the complexity results are of the same order in $\epsilon$ and $\frac{d}{l}$ as for the quadratic regularization algorithms, namely, $\mathcal{O}\left(\frac{d}{l}\epsilon^{-2}\right)$, but with different constants. 

Applying \autoref{lem:AA8impliesAA4}, \autoref{tmp-2021-12-31-5}, \autoref{tmp-2022-1-13-5} for \autoref{alg:sketching_TR}, we have that \autoref{AA3} and \autoref{AA4} are satisfied with
\begin{align}
    & \alphaLow = \oneMiusEpsSToHalf \epsilon \min \bracket{ \frac{C_7 (1-\theta)}{(\lPlusHalfBmax)\sMax^2}, \frac{1}{\BMax} } \notag \\
    & h(\epsilon, \alphaZero\gammaOne^{c+\newL}) = \theta \barH{(1 - \epS)^{1/2}\epsilon}{\alphaZero\gammaOne^{c+\newL}} 
     = \theta C_7  \minMe{\bracket{1-\epS}^{1/2} \epsilon \alphaZero \gammaOne^{c + \newL}}{\bracket{1-\epS} \epsilon^2/\BMax }
    \label{tmp-2022-1-13-6} 
\end{align}
Here, unlike in the analysis of \autoref{alg:sketching_QR}, $\alphaLow$ (and consequently $\newL$) depend on $\epsilon$, which we now make explicit. Using the definition of $\newL$ in \eqref{eqn:tauLDef} and substituting in the expression for $\alphaLow$, we have
\begin{align*}
    \alphaZero \gammaOne^{c+\newL} 
    & = \alphaZero\gammaOne^c
        \gammaOne^{
            \ceil{
                \log_\gammaOne \bracket{
                    \minMe{
                        \oneMiusEpsSToHalf \epsilon \min \bracket{ \frac{C_7 (1-\theta)}{(\lPlusHalfBmax)\sMax^2}, \frac{1}{\BMax} } \alphaZero^{-1}
                    }
                    {
                        \gammaTwo^{-1}
                    }
                }
            }
        } \notag \\
    & \geq \alphaZero\gammaOne^c \gammaOne
        \minMe{
                \oneMiusEpsSToHalf \epsilon \min \bracket{ \frac{C_7 (1-\theta)}{(\lPlusHalfBmax)\sMax^2}, \frac{1}{\BMax} } \alphaZero^{-1}
                }
                {
                    \gammaTwo^{-1}
                } \notag \\
    & = \gammaOne^{c+1}
        \minMe{
                \oneMiusEpsSToHalf \epsilon \min \bracket{ \frac{C_7 (1-\theta)}{(\lPlusHalfBmax)\sMax^2}, \frac{1}{\BMax} }
                }
                {
                    \alphaZero \gammaTwo^{-1}
                }, \notag 
\end{align*}
where we used $\ceil{y} \leq y+1$ to derive the inequality.
Therefore, \eqref{tmp-2022-1-13-6} implies 
\begin{align}
    &h(\epsilon, \alphaZero\gammaOne^{c+\newL}) \notag\\
    &\geq 
    \theta C_7 
    \minMe{
        \gammaOne^{c+1}
        \minMe{
                \bracket{1-\epS} \epsilon^2 \min \bracket{ \frac{C_7 (1-\theta)}{(\lPlusHalfBmax)\sMax^2}, \frac{1}{\BMax} }
                }
                {
                    \bracket{1-\epS}^{1/2} \epsilon \alphaZero \gammaTwo^{-1}
                }        
    }
    {
        \frac{\bracket{1-\epS}\epsilon^2}{\BMax}
    } \notag\\ 
    & = 
    \theta C_7 \bracket{1-\epS}\epsilon^2
    \minMe{
        \gammaOne^{c+1}
        \minMe{
                \min \bracket{ \frac{C_7 (1-\theta)}{(\lPlusHalfBmax)\sMax^2}, \frac{1}{\BMax} }
                }
                {
                    \frac{\alphaZero}{\bracket{1-\epS}^{1/2}\epsilon \gammaTwo}
                }        
    }
    {
        \frac{1}{\BMax}
    } \notag\\
    & = 
    \theta C_7 \bracket{1-\epS}\epsilon^2
        \gammaOne^{c+1}
        \minMe{
                \min \bracket{ \frac{C_7 (1-\theta)}{(\lPlusHalfBmax)\sMax^2}, \frac{1}{\BMax} }
                }
                {
                    \frac{\alphaZero}{\bracket{1-\epS}^{1/2}\epsilon \gammaTwo}
                }        
    \label{tmp-2022-1-13-9}.
\end{align}
where the last equality follows from $\gammaOne^{c+1} < 1$.
Moreover, \autoref{AA5} for \autoref{alg:sketching_TR} is satisfied by applying \autoref{lem:AA8impliesAA5}.
The following three subsections give complexity results of \autoref{alg:sketching_TR} using different random ensembles within \autoref{alg:sketching_TR}. Again, we suggest that the reader to refer back to \autoref{tab:alg:sketching} for a summary of the theoretical properties of these random ensembles. 

\subsubsection{Using scaled Gaussian matrices}
\autoref{alg:sketching_TR} with scaled Gaussian matrices have a (high-probability) iteration complexity of $\mathO{\frac{d}{l}\epsilon^{-2}}$ to drive $\gradFK$ below $\epsilon$, where $l$ can be chosen as a (problem dimension-independent) constant (see \autoref{tab:alg:sketching}). 
\begin{theorem}\label{thm:complexity-TR-Gaussian}
Assume that the objective function $f$ in problem \eqref{eqn::fStar} is continuously differentiable with $L$-Lipschitz continuous gradient. 
Let $\deltaSTwo, \epS, \deltaOne>0$, $l\in \N^+$ be such that
\begin{equation*}
    \deltaS < \frac{c}{(c+1)^2}, \quad \nPreFactorTR >0,
\end{equation*}
where $\deltaS = e^{-l\epS^2/4} + \deltaSTwo$.
Apply \autoref{alg:sketching_TR} to \eqref{eqn::fStar}, where $B_k$ is such that $\normTwo{B_k} \leq \BMax$ for all $k$ and some constant $\BMax\geq 0$, and where $\cal{S}$ is the distribution of scaled Gaussian matrices (\autoref{def:Gaussian}).  Assume \autoref{alg:sketching_TR} runs for $N$ iterations such that
\begin{equation}
    N \geq  \nPreFactorTR \squareBracket{
         \fZeroMinusfStarOverH
         + \frac{\newL}{1+c}}, \notag
\end{equation}
where 
\begin{equation}
    h(\epsilon, \alphaZero\gammaOne^{c+\newL}) 
    = \theta C_7 \bracket{1-\epS}\epsilon^2
        \gammaOne^{c+1}
        \min\left\{
                \frac{C_7 (1-\theta)}{[\lPlusHalfBmax]\sMax^2},
                   \frac{1}{\BMax},                
                    \frac{\alphaZero}{\bracket{1-\epS}^{1/2}\epsilon \gammaTwo} \right\}
\end{equation}
where $\sMax = 1 + \sqrt{\frac{d}{l}} + \sqrt{\frac{2}{l}\logOneOverDeltaSTwo}$. Then
\begin{equation}
    \probability{N \geq \nEps} \geq 1 - \chernoffLowerExponential, \notag
\end{equation}
where $\nEps$ is defined in \eqref{eqn::nEps}.
\end{theorem}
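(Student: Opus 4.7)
The plan is to verify that all the hypotheses of \autoref{thm2} hold for \autoref{alg:sketching_TR} with scaled Gaussian matrices, and then to read off the stated bound from \eqref{tmp-2022-1-13-9}. This mirrors the proof of \autoref{thm:complexity-QR-Gaussian} word-for-word in structure, only substituting the trust-region expressions for $\alphaLow$ and $\bar{h}$ derived earlier in this subsection.

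First I observe that \autoref{alg:sketching_TR} is a specialization of the generic \autoref{alg:generic}, so \autoref{thm2} is applicable provided its assumptions hold. Among these, \autoref{AA3} and \autoref{AA4} have already been established in \autoref{tmp-2021-12-31-5} and in \autoref{tmp-2022-1-13-5} (together with \autoref{lem:AA8impliesAA4}), with the explicit expressions for $\alphaLow$ and $h(\epsilon, \alphaZero\gammaOne^{c+\newL})$ leading to the lower bound \eqref{tmp-2022-1-13-9}. Furthermore, \autoref{AA5} holds by \autoref{lem:AA8impliesAA5}.

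It remains to check \autoref{AA2}. For this I invoke \autoref{lem::deduceAA2}, whose hypotheses \autoref{AA6} and \autoref{AA7} are verified for scaled Gaussian matrices by \autoref{lem:GaussJLEmbedding} and \autoref{Lem:GaussSMax} respectively, yielding
\begin{equation*}
\deltaSOne = e^{-\epS^2 l/4}, \qquad \sMax = 1 + \sqrt{d/l} + \sqrt{2l^{-1}\logOneOverDeltaSTwo},
\end{equation*}
so that \autoref{AA2} holds with $\delta_S = e^{-\epS^2 l/4} + \deltaSTwo$, which matches the standing assumption of the theorem. The condition $\deltaS < c/(c+1)^2$ in the hypothesis of \autoref{thm2} (i.e.\ \eqref{eqn::deltaSConditionThmTwo}) is then precisely the first assumed bound, and the given condition on $\deltaOne$ ensures \eqref{eqn:tmp32}. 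Applying \autoref{thm2} yields $\probability{N \geq \nEps} \geq 1 - \chernoffLowerExponential$ whenever $N$ satisfies \eqref{eqn::n_upper_2} with the $h$ expression given in \eqref{tmp-2022-1-13-9}; substituting the above value of $\sMax$ into \eqref{tmp-2022-1-13-9} gives exactly the stated lower bound on $h(\epsilon, \alphaZero\gammaOne^{c+\newL})$, completing the proof.

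There is no real obstacle here: the technical work was done in establishing \autoref{thm2} and in verifying the algorithm-independent assumptions \autoref{AA3}--\autoref{AA5} for the trust-region step via \autoref{AA8}. The only points that require a small amount of care are (i) ensuring that the $\epsilon$-dependence of $\alphaLow$ in the trust-region case is correctly propagated through the ceiling in the definition of $\newL$ --- which is already handled by the derivation culminating in \eqref{tmp-2022-1-13-9} --- and (ii) matching the two contributions to $\deltaS$ from the JL-embedding property and the operator-norm bound for Gaussian matrices, as above.
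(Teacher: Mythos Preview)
Your proposal is correct and follows essentially the same approach as the paper's own proof: verify \autoref{AA2} for scaled Gaussian matrices via \autoref{lem::deduceAA2}, \autoref{lem:GaussJLEmbedding}, and \autoref{Lem:GaussSMax}, note that \autoref{AA3}--\autoref{AA5} have already been established for \autoref{alg:sketching_TR}, apply \autoref{thm2}, and substitute the Gaussian value of $\sMax$ into \eqref{tmp-2022-1-13-9}. Your write-up is simply more explicit about which lemmas deliver which assumptions.
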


\begin{proof}
We note that \autoref{alg:sketching_TR} is a particular version of \autoref{alg:generic} therefore \autoref{thm2} applies. Applying \autoref{lem::deduceAA2}, \autoref{lem:GaussJLEmbedding} and \autoref{Lem:GaussSMax} for scaled Gaussian matrices, \autoref{AA2} is satisfied with $\sMax$ as above and
$\deltaS =  e^{-\epS^2 l/ 4} + \deltaSTwo$.
Applying \autoref{thm2} and substituting the expression of $\sMax$ in \eqref{tmp-2022-1-13-9} gives the desired result.
\end{proof}

\subsubsection{Using stable $1$-hashing matrices}
\autoref{alg:sketching_TR} with stable $1$-hashing matrices of size $l=\mathcal{O}(1)$ has, with high-probability, an iteration complexity $\mathO{\frac{d}{l}\epsilon^{-2}}$ to drive $\|\gradFK\|$ below $\epsilon$; the choice of the subspace dimension $l$ can be a (small) (problem dimension-independent) constant (see \autoref{tab:alg:sketching}). 

\begin{theorem}\label{thm:complexity-TR-stable-1-hashing}
Assume that the objective function $f$ in problem \eqref{eqn::fStar} is continuously differentiable with $L$-Lipschitz continuous gradient. 
Let $\deltaOne>0$, $\epS \in (0,3/4)$, $l\in \N^+$  be such that
\begin{equation*}
    \deltaS < \frac{c}{(c+1)^2}, \quad \nPreFactorTR >0,
\end{equation*}
where $\deltaS = e^{-\frac{l(\epS-1/4)^2}{C_3}}$ and $C_3$ is defined in \autoref{lem:tmp:2022-1-13-1}.
Apply \autoref{alg:sketching_TR} to \eqref{eqn::fStar}, where $B_k$ is such that $\normTwo{B_k} \leq \BMax$ for all $k$ and some constant $\BMax\geq 0$, and where $\cal{S}$ is the distribution of scaled stable 1-hashing matrices (\autoref{def::stable-1-hashing}).  Assume \autoref{alg:sketching_TR} runs for $N$ iterations such that
\begin{equation}
 N \geq  \nPreFactorTR \squareBracket{
         \fZeroMinusfStarOverH
         + \frac{\newL}{1+c}}, \notag
\end{equation}
where 
\begin{equation}
    h(\epsilon, \alphaZero\gammaOne^{c+\newL}) = 
        \theta C_7 \bracket{1-\epS}\epsilon^2
        \gammaOne^{c+1}
        \min\left\{
                 \frac{C_7 (1-\theta)}{(\lPlusHalfBmax)\ceil{d/l}}, \frac{1}{\BMax},
                               \frac{\alphaZero}{\bracket{1-\epS}^{1/2}\epsilon \gammaTwo}\right\}
                \notag
\end{equation}
Then, we have 
\begin{equation}
    \probability{N \geq \nEps} \geq 1 - \chernoffLowerExponential, \notag
\end{equation}
where $\nEps$ is defined in \eqref{eqn::nEps}.
\end{theorem}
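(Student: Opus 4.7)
The plan is to mirror exactly the proof of \autoref{thm:complexity-TR-Gaussian} and reuse the ensemble-agnostic scaffolding already assembled for \autoref{alg:sketching_TR}, replacing only the two quantities that depend on the sketching distribution, namely $\delta_S$ and $S_{max}$. Since \autoref{alg:sketching_TR} is a particular instance of \autoref{alg:generic}, once \autoref{AA2}--\autoref{AA5} are verified with appropriate constants the master complexity result \autoref{thm2} applies directly.

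Assumptions \autoref{AA3}, \autoref{AA4} and \autoref{AA5} have already been established for \autoref{alg:sketching_TR} with no reference to the random ensemble: \autoref{tmp-2021-12-31-5} yields \autoref{AA3} with the stated $\alpha_{low}$, \autoref{tmp-2022-1-13-5} combined with \autoref{lem:AA8impliesAA4} yields \autoref{AA4} with the function $h$ admitting the lower bound \eqref{tmp-2022-1-13-9} (parameterised by $S_{max}$), and \autoref{lem:AA8impliesAA5} gives \autoref{AA5}. Hence the only ensemble-specific work is to verify \autoref{AA2} and to evaluate the constant $S_{max}$ entering \eqref{tmp-2022-1-13-9}.

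For the stable $1$-hashing distribution, \autoref{lem:tmp:2022-1-13-1} supplies \autoref{AA6} with $\delta_S^{(1)} = e^{-l(\epsilon_S - 1/4)^2/C_3}$ (valid for $\epsilon_S \in (0,3/4)$), while \autoref{lem:stable-1-hashing-SMax} supplies \autoref{AA7} with $\delta_S^{(2)} = 0$ and the deterministic norm bound $S_{max} = \sqrt{\lceil d/l \rceil}$. Invoking \autoref{lem::deduceAA2} then delivers \autoref{AA2} with $\delta_S = \delta_S^{(1)} + \delta_S^{(2)} = e^{-l(\epsilon_S - 1/4)^2/C_3}$, matching the hypothesis of the theorem and, jointly with \eqref{eqn::deltaSConditionThmTwo} and \eqref{eqn:tmp32}, fulfilling the probabilistic premises of \autoref{thm2}.

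The final step is purely mechanical: substitute $S_{max} = \sqrt{\lceil d/l \rceil}$ into \eqref{tmp-2022-1-13-9} to recover the exact form of $h(\epsilon, \alpha_0 \gamma_1^{c+\tau_\alpha})$ quoted in the theorem, then apply \autoref{thm2} to conclude $\mathbb{P}(N \geq N_\epsilon) \geq 1 - e^{-\frac{\delta_1^2}{2}(1-\delta_S)N}$. I do not anticipate any genuine obstacle; the only material difference from the Gaussian case is that the random Davidson-type bound $1 + \sqrt{d/l} + \sqrt{2 l^{-1}\log(1/\delta_S^{(2)})}$ is replaced by the deterministic $\sqrt{\lceil d/l \rceil}$, which has the additional effect of forcing $\delta_S^{(2)} = 0$ and thereby removing any tuning parameter associated with the spectral-norm concentration from the final bound.
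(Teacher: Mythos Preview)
Your proposal is correct and follows essentially the same approach as the paper: invoke \autoref{lem::deduceAA2} together with \autoref{lem:tmp:2022-1-13-1} and \autoref{lem:stable-1-hashing-SMax} to obtain \autoref{AA2} with $\delta_S = e^{-l(\epsilon_S-1/4)^2/C_3}$ and $S_{max}=\sqrt{\lceil d/l\rceil}$, then substitute $S_{max}$ into \eqref{tmp-2022-1-13-9} and apply \autoref{thm2}. Your write-up is simply more explicit about the ensemble-agnostic verification of \autoref{AA3}--\autoref{AA5}, which the paper handles in the discussion preceding the theorem.
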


\begin{proof}
Applying \autoref{lem::deduceAA2}, \autoref{lem:tmp:2022-1-13-1} and \autoref{lem:stable-1-hashing-SMax} for stable 1-hashing matrices, \autoref{AA2} is satisfied with 
$ \sMax = \sqrt{\ceil{d/l}}$ and 
    $ \deltaS =  e^{-\frac{l(\epS-1/4)^2}{C_3}}$.
Applying \autoref{thm2} and substituting the expression of $\sMax$ in \eqref{tmp-2022-1-13-9} gives the desired result.
\end{proof}

\subsubsection{Using sampling matrices}
\autoref{alg:sketching_TR} with scaled sampling matrices of size $l$ has, with high-probability, an iteration complexity 
$\mathO{\frac{d}{l}\epsilon^{-2}}$ to drive $\|\gradFK\|$ below $\epsilon$. However, unlike in the above two cases, here $l$ depends on the problem dimension $d$ and a problem-dependent constant $\nu$ that reflects how similar in magnitude the entries in $\nabla f(x)$ are (see \autoref{tab:alg:sketching}). If $\nu = \mathO{1/d}$ (and so these entries are similar in size), then $l=\mathcal{O}(1)$ in a problem dimension-independent way; else, indeed, we need to choose $l$ proportional to $d$.

\begin{theorem}\label{thm:complexity-TR-sampling}
Assume that the objective function $f$ in problem \eqref{eqn::fStar} is continuously differentiable with $L$-Lipschitz continuous gradient. 
Let $\deltaOne>0$, $\epS \in (0,1)$, $l\in \N^+$ be such that
\begin{equation*}
    \deltaS < \frac{c}{(c+1)^2}, \quad \nPreFactorTR >0,
\end{equation*}
where $\deltaS =e^{- \frac{\epS^2 l}{2d\nu^2}}$ and $\nu$ is defined in \eqref{eq:nu_def}.
Apply \autoref{alg:sketching_TR} to \eqref{eqn::fStar}, where $B_k$ is such that $\normTwo{B_k} \leq \BMax$ for all $k$ and some constant $\BMax\geq 0$, and where $\cal{S}$ is the distribution of scaled sampling matrices (\autoref{def:sampling}).  Assume \autoref{alg:sketching_TR} runs for $N$ iterations such that
\begin{equation}
    N \geq  \nPreFactorTR \squareBracket{
         \fZeroMinusfStarOverH
         + \frac{\newL}{1+c}}, \notag
\end{equation}
where 
\begin{equation}
    h(\epsilon, \alphaZero\gammaOne^{c+\newL}) = 
        \theta C_7 \bracket{1-\epS}\epsilon^2
        \gammaOne^{c+1}
        \minMe{
                \min \bracket{ \frac{C_7 (1-\theta)}{(\lPlusHalfBmax)d/l}, \frac{1}{\BMax} }
                }
                {
                    \frac{\alphaZero}{\bracket{1-\epS}^{1/2}\epsilon \gammaTwo}
                }\notag
\end{equation}
Then, we have 
\begin{equation}
    \probability{N \geq \nEps} \geq 1 - \chernoffLowerExponential, \notag
\end{equation}
where $\nEps$ is defined in \eqref{eqn::nEps}.
\end{theorem}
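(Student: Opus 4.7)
The plan follows the template already established by the proofs of \autoref{thm:complexity-TR-Gaussian} and \autoref{thm:complexity-TR-stable-1-hashing}: I will verify that each of the hypotheses \autoref{AA2}--\autoref{AA5} of the master result \autoref{thm2} holds for \autoref{alg:sketching_TR} instantiated with scaled sampling matrices, and then quote \autoref{thm2} together with the already-derived bound \eqref{tmp-2022-1-13-9} on $h(\epsilon,\alphaZero\gammaOne^{c+\newL})$, into which I substitute the ensemble-specific value of $\sMax$.

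First I would note that \autoref{alg:sketching_TR} is a particular instance of \autoref{alg:generic}, so it suffices to verify the four standing assumptions. The key step, and the only one that is ensemble-specific, is \autoref{AA2}. I would obtain this from \autoref{lem::deduceAA2}: \autoref{lem:sampling:non-uniformity-BCGN} gives \autoref{AA6} for scaled sampling matrices with $\deltaSOne = e^{-\epS^2 l/(2d\nu^2)}$, and \autoref{tmp-2022-1-14-12} gives \autoref{AA7} with $\deltaSTwo = 0$ and $\sMax = \sqrt{d/l}$. Combining these via \autoref{lem::deduceAA2} yields \autoref{AA2} with $\deltaS = e^{-\epS^2 l/(2d\nu^2)}$, which matches the definition of $\deltaS$ assumed in the theorem statement, so that the Chernoff-type condition $\deltaS < c/(c+1)^2$ and the positivity condition \eqref{eqn:tmp32} are indeed usable.

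For \autoref{AA3} and \autoref{AA4} I would invoke the ensemble-agnostic work already done in the subsection: \autoref{tmp-2021-12-31-5} shows \autoref{AA3} holds with the stated $\alphaLow$, \autoref{tmp-2022-1-13-5} shows \autoref{AA8} holds with the stated $\bar{h}$, and \autoref{lem:AA8impliesAA4} then yields \autoref{AA4}. Inserting the sampling-matrix value $\sMax = \sqrt{d/l}$ into the general expression \eqref{tmp-2022-1-13-9} produces exactly the $h(\epsilon,\alphaZero\gammaOne^{c+\newL})$ displayed in the theorem. \autoref{AA5} is satisfied by \autoref{lem:AA8impliesAA5}, independently of the sketching ensemble.

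With all four assumptions verified, a direct application of \autoref{thm2} with the prescribed lower bound on $N$ and with $\deltaS = e^{-\epS^2 l/(2d\nu^2)}$ yields $\probability{N \geq \nEps} \geq 1 - \chernoffLowerExponential$, as claimed. There is no real obstacle here; the proof is essentially a bookkeeping exercise, the only nontrivial input being the tail bound for sampling matrices (\autoref{lem:sampling:non-uniformity-BCGN}), whose $\nu$-dependence is what is responsible for the extra factor of $d\nu^2$ appearing inside the exponent of $\deltaS$ compared with the Gaussian and stable $1$-hashing cases.
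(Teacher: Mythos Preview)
Your proposal is correct and follows essentially the same approach as the paper: verify \autoref{AA2} for scaled sampling matrices via \autoref{lem::deduceAA2}, \autoref{lem:sampling:non-uniformity-BCGN} and \autoref{tmp-2022-1-14-12} to obtain $\sMax=\sqrt{d/l}$ and $\deltaS=e^{-\epS^2 l/(2d\nu^2)}$, then invoke \autoref{thm2} with the expression \eqref{tmp-2022-1-13-9}. The paper's proof is terser because \autoref{AA3}, \autoref{AA4}, \autoref{AA5} were already established ensemble-independently in the preamble to the three trust-region theorems, but your explicit listing of those lemmas is entirely in line with the argument.
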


\begin{proof}
Applying \autoref{lem::deduceAA2}, \autoref{lem:sampling:non-uniformity-BCGN} and \autoref{tmp-2022-1-14-12} for scaled sampling matrices, \autoref{AA2} is satisfied with 
$\sMax = \sqrt{d/l}$ and
    $ \deltaS = e^{- \frac{\epsilon^2 l}{2d\nu^2}}$.
Applying \autoref{thm2} and substituting the expression of $\sMax$ in \eqref{tmp-2022-1-13-9} gives the desired result.
\end{proof}

\begin{remark}
Similarly to \autoref{alg:sketching_QR}, \autoref{alg:sketching_TR} matches the iteration complexity $\mathcal{O}(\epsilon^{-2})$ of the corresponding (full-space) trust region method in terms of desired accuracy $\epsilon$.  Furthermore, \autoref{alg:sketching_QR} and \autoref{alg:sketching_TR} with a(ny) of the above three random ensembles only require $l$ directional derivative evaluations of $f$ per iteration, instead of $d$ such evaluations required by the (full-space) methods. The dimension $l$ of the projection subspace can also be chosen independent of $d$, in which case, a lower computational complexity and memory requirement can be gained per iteration. However, the iteration complexity of these subspace variants increases by a factor of $d/l$, which could be eliminated if $l$ is a multiple of $d$; with the case $l=d$ recovering the full-dimensional first-order/trust-region or quadratic regularization complexity bound, which is reassuring for our theory. 

\end{remark}

\section{Random-subspace Gauss-Newton methods for solving nonlinear least-squares problems: theoretical and numerical illustrations}
In this section, we further illustrate our subspace algorithms and results by particularizing our approach to  nonlinear least squares problems of the form,
\begin{equation}\label{NLS}
\min_{x \in \R^d} f(x) = \frac{1}{2}\sum_{i=1}^n \norm{ r_i(x) }_2^2=\frac{1}{2}\norm{r(x)}_2^2 
\end{equation}
where $r = (r_1, \dots, r_n): \R^d \to \R^n$ is a smooth vector of nonlinear (possibly nonconvex) residual functions, and
$J(x)$ is the $n\times d$ matrix of first derivatives of $r(x)$.
The classical Gauss-Newton (GN) algorithm \cite{Nocedal:2006uv} is an approximate second-order method that at 
every iterate $x_k$, approximately minimises the following convex quadratic local model
\begin{align*}
f(x_k) + \dotp{ J(x_k)^Tr(x_k)}{ s } + \frac{1}{2} \dotp{ s }{ J(x_k)^T J(x_k) s }  
\end{align*}
over $s \in \R^d$, which is the same as the linear least squares  $\frac{1}{2}\|r(x_k) + J(x_k)s\|^2$. 
In our approach, which we refer to as Random-Subspace Gauss-Newton (RS-GN), we reduce the dimensionality of this model by minimising it in an $l$-dimensional random subspace, with $l\ll d$, which gives the following reduced model,
\begin{align}
\hat{m}_k(\sHat)=f(x_k) + \dotp{ J_{\mathcal{S}}(x_k)^Tr(x_k) }{ \sHat } + \frac{1}{2} \dotp{ \sHat }{ J_{\mathcal{S}}(x_k)^T J_{\mathcal{S}}(x_k) \sHat }, \quad \sHat \in \R^l, 
\label{m_k_definition_TR}
\end{align}
where  $J_{\mathcal{S}}(x_k) = J(x_k)S_k^T \in \R^{n\times l}$ denotes the reduced Jacobian for $S_k \in \R^{l\times d}$ being a randomly generated sketching matrix. Letting $B_k = J_k^T J_k$, and recalling that 
$\grad f(x) = J(x)^Tr(x)$, we deduce that $\hat{m}_k(\sHat)$ in \eqref{m_k_definition_TR} coincides with the reduced model in \eqref{eqn::mKHatSpec}.
Thus the 
 \autoref{alg:sketching} framework can be applied directly to \eqref{NLS}, with this choice of $\hat{m}_k$ in \eqref{m_k_definition_TR}.
In particular,  quadratic regularization variants (\autoref{alg:sketching_QR}) and trust region ones (\autoref{alg:sketching_TR}) can be straightforwardly devised; with the ensuing convergence and complexity guarantees of Theorems \ref{thm:complexity-QR-Gaussian}--\ref{thm:complexity-QR-sampling} for \autoref{alg:sketching_QR}, and those of Theorems \ref{thm:complexity-TR-Gaussian}--\ref{thm:complexity-TR-sampling} for \autoref{alg:sketching_TR}, holding under usual sketching assumptions and whenever the Jacobian $J(x)$ is uniformly bounded above and $r(x)$ is Lipschitz continuous  (which ensures $B_k$ is uniformly bounded above and $\nabla f$ is Lipschitz continuous).  

Compared to the classical Gauss-Newton model, in addition to the speed-up gained due to the model dimension being reduced from $d$ to $l$, the reduced model \eqref{m_k_definition_TR} also offers the computational advantage that it only needs to evaluate $l$ Jacobian actions, giving $J_{\mathcal{S}}(x_k)$, instead of the full Jacobian matrix $J(x_k)$. 
In its simplest form, when $S_k$ is a scaled sampling matrix, $J_{\mathcal{S}}$ can be thought of as a random subselection of columns of the full Jacobian $J$, which leads to variants of our framework that are Block-Coordinate Gauss-Newton (BC-GN) methods. In this case, for example, if the Jacobian were being calculated by finite-differences of the residual $r$, only a small number of evaluations of $r$ along coordinate directions would be needed;  such a BC-GN variant has already been used for parameter estimation in climate modelling \cite{tett2017calibrating}. Note that theoretically, the convergence of BC-GN method requires an upper bound on $\frac{\normInf{\gradFK}}{\normTwo{\gradFK}}$ for all $k\in \N$ (for more details, see the discussion on sampling matrices on page \pageref{sampling_mat_paragraph}) and \autoref{thm:complexity-TR-sampling}.
More generally, $S_k$ can be generated from any matrix distribution that satisfies \autoref{AA6}, \autoref{AA7}, such as scaled Gaussian matrices or $s$-hashing matrices. We will now proceed to testing BC-GN and RS-GN on standard nonlinear least squares test problems.

\subsection{Numerical experiments}
In this section, we numerically test RS-GN with trust-region using different choices of the sketching matrix $S_k$ (results for quadratic regularisation are comparable); the code can be found at \texttt{https://github.com/jfowkes/BCGN}.  We use suitable subsets of the extensive CUTEst test collection \cite{gould2015cutest}. We measure performance using data profiles, a variant of performance profiles \cite{dolan2002benchmarking}, over Jacobian actions (namely, Jacobian matrix-vector multiply\footnote{We note that if the given vector is a coordinate one, then the ensuing Jacobian action is a column of the Jacobian, which is the derivative of each component of $r(x)$ with respect to one coordinate in $x$.}) and runtime. That is, for each solver $s$, each test problem $p \in \mathcal{P}$ and for an accuracy level $\tau\in(0,1)$, we determine the number of Jacobian action evaluations $N_p(s,\tau)$ required for a problem to be solved: 
\[
N_p(s,\tau) \deq \text{no.\ Jacobian action evaluations required until } f(x_k) \le f^* + \tau(f(x_0) - f^*), 
\]
where $f^*$ is an estimate of the true minimum $f(x^*)$. (Note that sometimes $f^*$ is taken to be the best value achieved by any solver.) We define $N_p(s,\tau)=\infty$ if this was not achieved in the maximum computational budget allowed, which we take to be $50d_p$ Jacobian action evaluations where $d_p$ is the dimension of test problem $p$.
To obtain data profiles, we can then normalise $N_p(s,\tau)$ by the problem dimension $d_p$. That is, we plot
\[
\pi_{s,\tau}^N(\alpha) \deq \frac{\abs{\{p\in\mathcal{P} : N_p(s,\tau) \le \alpha 
d_p\}}}{\abs{\mathcal{P}}}, \qquad \text{for } \alpha \in [0,50].
\]
We perform $100$ runs of each RS-GN variant (since RS-GN is a randomised algorithm) and thus to enable fair comparison, we treat each run as a separate `problem' in the above.

\paragraph{Zero Residual CUTEst Problems}
First we consider a test set of 32 zero-residual ($f(x^*)=0$) nonlinear least squares problems from CUTEst, given in \autoref{tab:cutest32}. These problems mostly have dimension around $d=100$, although one or two have $d=50$ to $64$. 
We view the remit of subspace methods as enabling progress of the algorithm from little/less problem information than full-dimensional variants. Thus of particular interest is to test the methods' behaviour in the low accuracy regime (namely, $\tau=10^{-1}$ in the data profiles). 

\begin{table}[!h]
\centering
\begin{tabular}{lrrlrrlrr}  
\toprule
Name & $d$ & $n$ & Name & $d$ & $n$ & Name & $d$ & $n$ \\
\midrule
ARGTRIG & 100 & 100 & DRCAVTY1 & 100 & 100 & OSCIGRNE & 100 & 100 \\
ARTIF & 100 & 100 & DRCAVTY3 & 100 & 100 & POWELLSE & 100 & 100 \\
BDVALUES & 100 & 100 & EIGENA & 110 & 110 & SEMICN2U & 100 & 100 \\
BRATU2D & 64 & 64 & EIGENB & 110 & 110 & SEMICON2 & 100 & 100 \\
BROWNALE & 100 & 100 & FLOSP2TL & 59 & 59 & SPMSQRT & 100 & 164 \\
BROYDN3D & 100 & 100 & FLOSP2TM & 59 & 59 & VARDIMNE & 100 & 102 \\
BROYDNBD & 100 & 100 & HYDCAR20 & 99 & 99 & LUKSAN11 & 100 & 198 \\
CBRATU2D & 50 & 50 & INTEGREQ & 100 & 100 & LUKSAN21 & 100 & 100 \\
CHANDHEQ & 100 & 100 & MOREBVNE & 100 & 100 & YATP1NE & 120 & 120 \\
CHEMRCTA & 100 & 100 & MSQRTA & 100 & 100 & YATP2SQ & 120 & 120 \\
CHNRSBNE & 50 & 98 & MSQRTB & 100 & 100 & & & \\
\bottomrule
\end{tabular}
\caption{The 32 CUTEst test problems in the zero-residual test set.}\label{tab:cutest32}
\end{table}

Figures \ref{fig:cutest32_coordinate}, \ref{fig:cutest32_gaussian} and \ref{fig:cutest32_hashing} show the performance of RS-GN with different choices of sketching matrices (sampling, Gaussian and $3$-hashing, respectively) and for different sizes of the subspace $l$ 
(as a fraction of $d$), in data profiles that measure cumulative Jacobian actions evaluations (meaning total number of Jacobian matrix-vector products that are used). We find that methods perform as expected: the more problem information, the more accuracy can be achieved and so the full-dimensional Gauss-Newton performs best, with the subspace variants decreasing in performance as the amount of per iteration problem information decreases. It is also useful to compare across the different sketching matrices and to this end we plot the Jacobian actions data profile for a subspace size of $0.75d$ in \autoref{fig:cutest32_across}; we see that interestingly, sampling matrices exhibit the best performance from a Jacobian action budget perspective, in this low-accuracy regime.


\begin{figure}[!p]
\centering
\begin{minipage}{.45\textwidth}
\centering
\includegraphics[width=\textwidth]{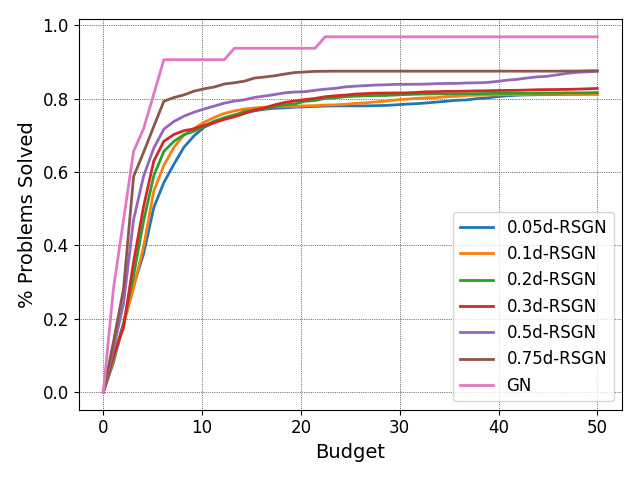}
\caption{RS-GN data profiles on the zero-residual test set with coordinate sampling for an accuracy of $\tau=10^{-1}$.}
\label{fig:cutest32_coordinate}
\end{minipage}
\begin{minipage}{.45\textwidth}
\centering
\includegraphics[width=\textwidth]{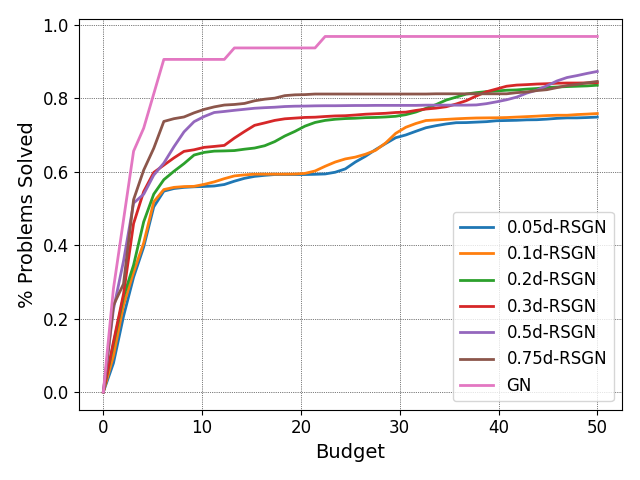}
\caption{RS-GN data profiles on the zero-residual test set with Gaussian sketching for an accuracy of $\tau=10^{-1}$.}
\label{fig:cutest32_gaussian}
\end{minipage}
\end{figure}

\begin{figure}[!p]
\centering
\begin{minipage}{.45\textwidth}
\centering
\includegraphics[width=\textwidth]{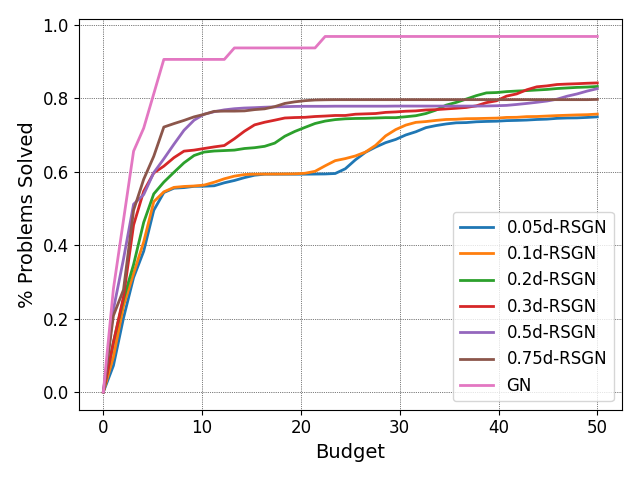}
\caption{RS-GN data profiles on the zero-residual test set with $3$-hashing sketching for an accuracy of $\tau=10^{-1}$.}
\label{fig:cutest32_hashing}
\end{minipage}
\begin{minipage}{.45\textwidth}
\centering
\includegraphics[width=\textwidth]{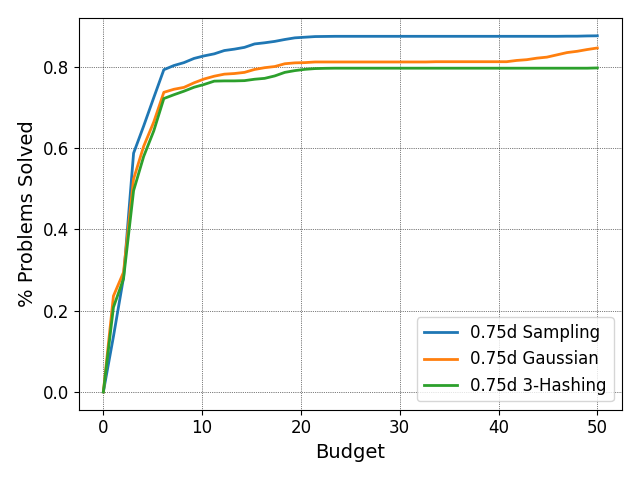}
\caption{RS-GN data profiles on the zero-residual test set with the various sketching matrices for an accuracy of $\tau=10^{-1}$.}
\label{fig:cutest32_across}
\end{minipage}
\end{figure}

\paragraph{Nonzero residual CUTEst Problems}
Next, we consider a test set of 19 nonzero-residual ($f(x^*)\neq 0$) nonlinear least squares problems from CUTEst, given in \autoref{tab:cutest19}. As before, these problems mostly have dimension around $d=100$ with a few having $d=59$ to $64$, for reasons previously discussed. We will also set the accuracy level $\tau=10^{-1}$ as before. 

\begin{table}[!h]
\centering
\begin{tabular}{lrrlrrlrr}  
\toprule
Name & $d$ & $n$ & Name & $d$ & $n$ & Name & $d$ & $n$ \\
\midrule
ARGLALE & 100 & 400 & FLOSP2HL & 59 & 59 & LUKSAN14 & 98 & 224 \\
ARGLBLE & 100 & 400 & FLOSP2HM & 59 & 59 & LUKSAN15 & 100 & 196 \\
ARWHDNE & 100 & 198 & FREURONE & 100 & 198 & LUKSAN16 & 100 & 196 \\
BRATU2DT & 64 & 64 & PENLT1NE & 100 & 101 & LUKSAN17 & 100 & 196 \\
CHEMRCTB & 100 & 100 & PENLT12NE & 100 & 200 & LUKSAN22 & 100 & 198 \\
DRCAVTY2 & 100 & 100 & LUKSAN12 & 98 & 192 & & & \\
FLOSP2HH & 59 & 59 & LUKSAN13 & 98 & 224 & & & \\
\bottomrule
\end{tabular}
\caption{The 19 CUTEst test problems in the nonzero-residual test set.}\label{tab:cutest19}
\end{table}

Figures \ref{fig:cutest19_coordinate}, \ref{fig:cutest19_gaussian} and \ref{fig:cutest19_hashing} show the performance of RS-GN with different choices of sketching matrices (sampling, Gaussian and $3$-hashing, respectively) and for different sizes of the subspace $l$ 
(as a fraction of $d$), in data profiles that measure cumulative Jacobian actions evaluations (meaning total number of Jacobian matrix-vector products that are used). Again, it is useful to compare across the different sketching matrices and to this end we plot the Jacobian actions data profile for a subspace size of $0.75d$ in \autoref{fig:cutest19_across}. The conclusions are similar to the zero-residual cases above. Thus our conclusions for these low-dimensional examples are that a good remit for the use of subspace methods is when the full problem information is not available or is computationally too expensive, and so the only feasible possibility is to query a subset of Jacobian actions at each iteration; then subspace methods are applicable and provide reasonable progress.

\begin{figure}[!p]
\centering
\begin{minipage}{.45\textwidth}
\centering
\includegraphics[width=\textwidth]{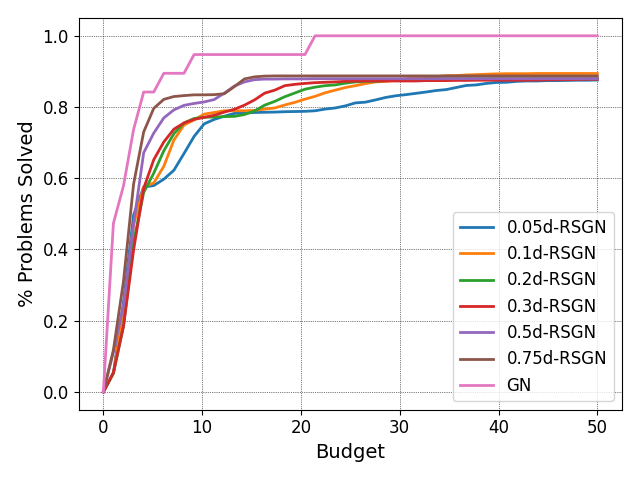}
\caption{RS-GN data profiles on the nonzero-residual test set with coordinate sampling for an accuracy of $\tau=10^{-1}$.}
\label{fig:cutest19_coordinate}
\end{minipage}
\begin{minipage}{.45\textwidth}
\centering
\includegraphics[width=\textwidth]{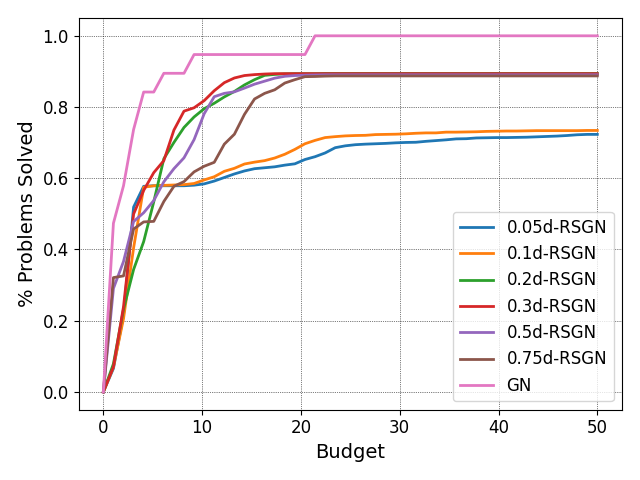}
\caption{RS-GN data profiles on the nonzero-residual test with Gaussian sketching for an accuracy of $\tau=10^{-1}$.}
\label{fig:cutest19_gaussian}
\end{minipage}
\end{figure}

\begin{figure}[!p]
\centering
\begin{minipage}{.45\textwidth}
\centering
\includegraphics[width=\textwidth]{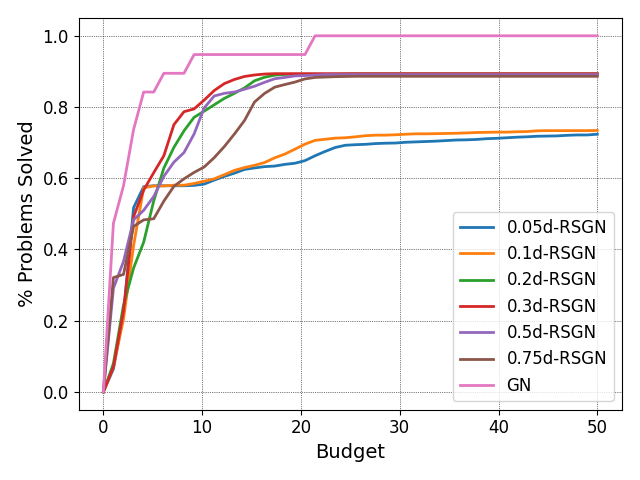}
\caption{RS-GN data profiles on the nonzero-residual test set with $3$-hashing sketching for an accuracy of $\tau=10^{-1}$.}
\label{fig:cutest19_hashing}
\end{minipage}
\begin{minipage}{.45\textwidth}
\centering
\includegraphics[width=\textwidth]{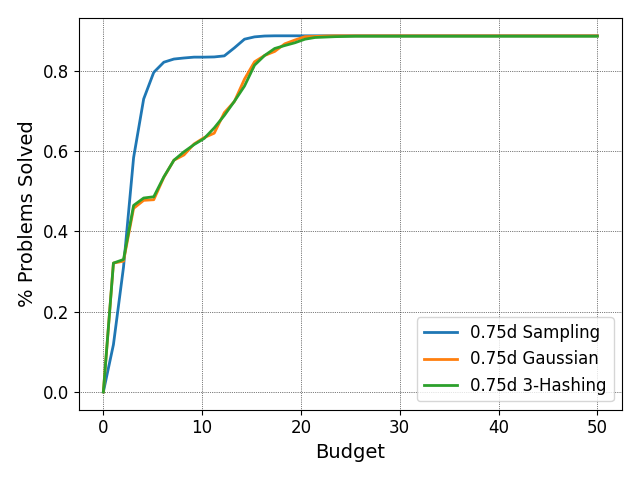}
\caption{RS-GN data profiles on the nonzero-residual test set with the various sketching matrices for an accuracy of $\tau=10^{-1}$.}
\label{fig:cutest19_across}
\end{minipage}
\end{figure}

\paragraph{Large-scale CUTEst Problems}
In this section we investigate the behaviour of RS-GN on three large(r)-scale ($d\approx5,000$ to $10,000$) problems from the CUTEst collection; see \autoref{tab:cutestind}. We run RS-GN five times on each problem until we achieve a $10^{-1}$ relative decrease in the objective, or failing that, for a maximum of 20 iterations. We plot the objective decrease against cumulative Jacobian action evaluations for each  run with block-sizes of 1\%, 5\%, 10\%, 50\%, 100\% of the original; see Figures \ref{fig:large_coordinate}, \ref{fig:large_gaussian} and 
\ref{fig:large_hashing}.

\begin{table}[!h]
\centering
\begin{tabular}{lrrlrrlrr}  
\toprule
Name & $d$ & $n$ & Name & $d$ & $n$ & Name & $d$ & $n$ \\
\midrule
ARTIF & 5,000 & 5,000 & BRATU2D & 4,900 & 4,900 & OSCIGRNE & 10,000 & 10,000 \\
\bottomrule
\end{tabular}
\caption{Three large-scale CUTEst test problems.}\label{tab:cutestind}
\end{table}

These figures show us that on ARTIF, block-coordinate variants of GN perform well but cannot surpass the efficient information use of the full-dimensional GN. However, RS-GN with Gaussian and hashing sketching can outperform GN when the given budget is low, which is often in practice. Similar behaviour occurs on OSCIRGNE with subspace methods outperforming GN initially, including for block coordinates variants.
On BRATU2D, all algorithms, including GN are slow initially, with GN then achieving a fast rate\footnote{We have carefully checked and subspace variants are not stagnating but progressing slowly on this problem.}. This fast rate can also be achieved by block variants with adaptive block size, as we illustrate next.

\begin{figure}[!h]
\begin{subfigure}{0.32\textwidth}
\includegraphics[width=\textwidth]{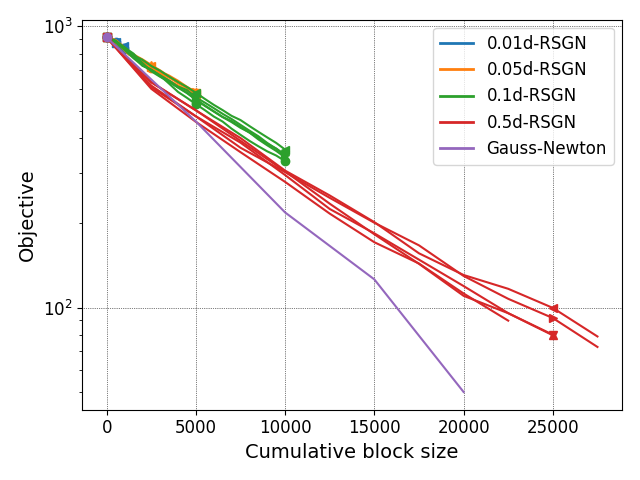}
\end{subfigure}
\begin{subfigure}{0.32\textwidth}
\includegraphics[width=\textwidth]{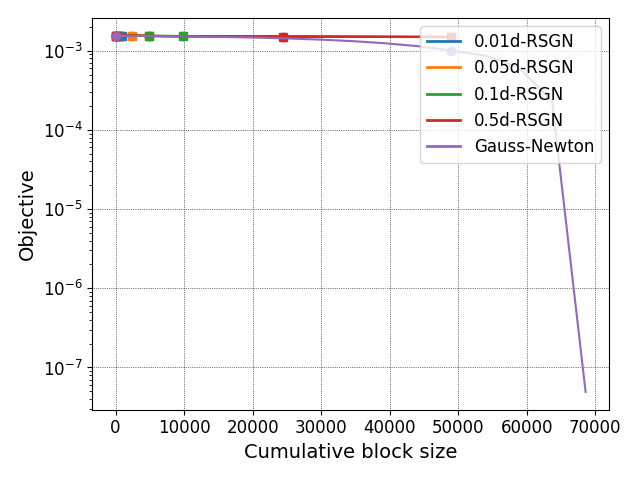}
\end{subfigure}
\begin{subfigure}{0.32\textwidth}
\includegraphics[width=\textwidth]{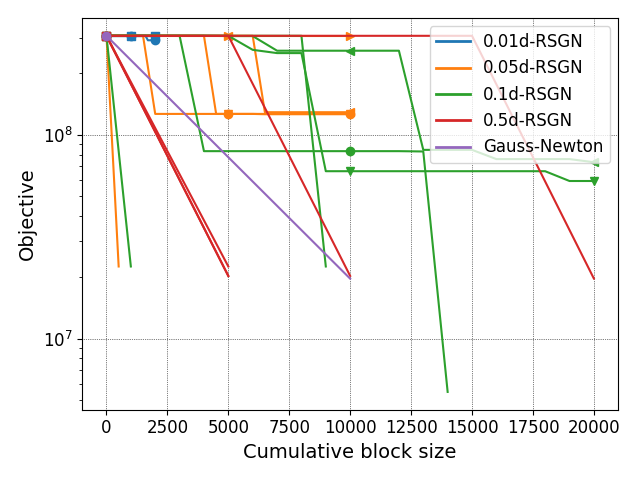}
\end{subfigure}
\caption{ARTIF (left), BRATU2D (middle) and OSCIGRNE (right) objective value against cumulative Jacobian action size for RS-GN with coordinate sampling.}
\label{fig:large_coordinate}
\end{figure}


\begin{figure}[!h]
\begin{subfigure}{0.32\textwidth}
\includegraphics[width=\textwidth]{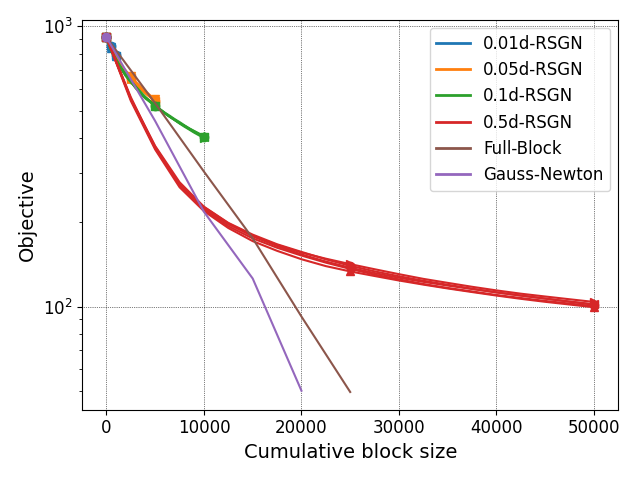}
\end{subfigure}
\begin{subfigure}{0.32\textwidth}
\includegraphics[width=\textwidth]{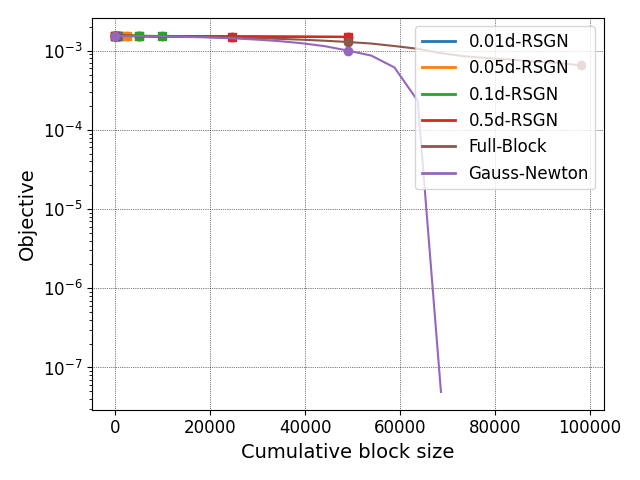}
\end{subfigure}
\begin{subfigure}{0.32\textwidth}
\includegraphics[width=\textwidth]{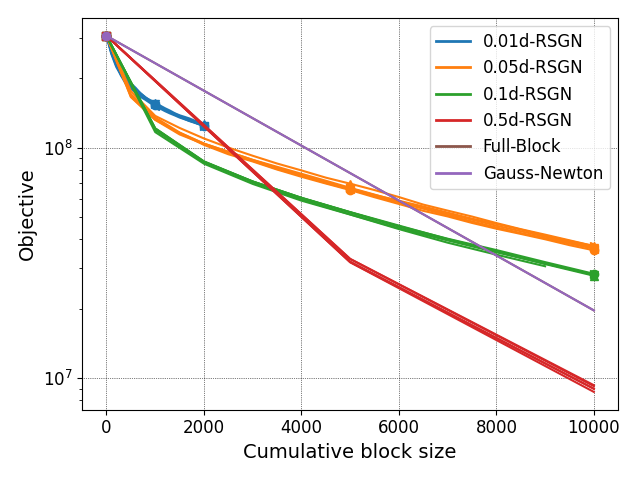}
\end{subfigure}
\caption{ARTIF (left), BRATU2D (middle) and OSCIGRNE (right) objective value against cumulative Jacobian action size for R-SGN with Gaussian sketching.}
\label{fig:large_gaussian}
\end{figure}


\begin{figure}[!h]
\begin{subfigure}{0.32\textwidth}
\includegraphics[width=\textwidth]{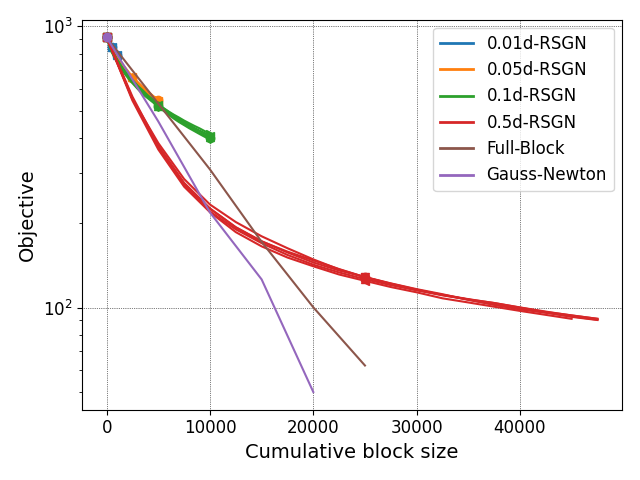}
\end{subfigure}
\begin{subfigure}{0.32\textwidth}
\includegraphics[width=\textwidth]{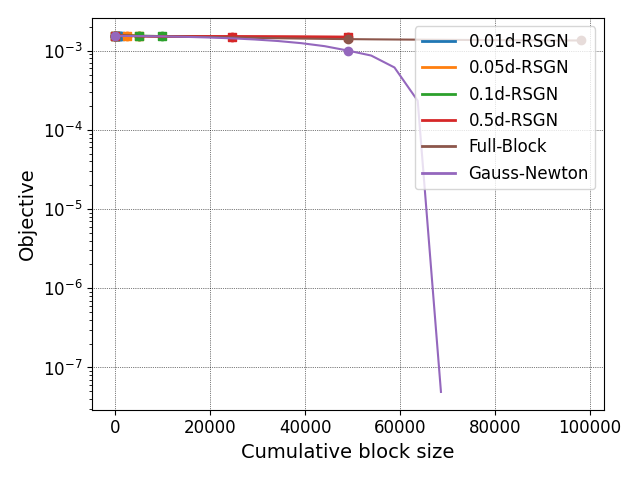}
\end{subfigure}
\begin{subfigure}{0.32\textwidth}
\includegraphics[width=\textwidth]{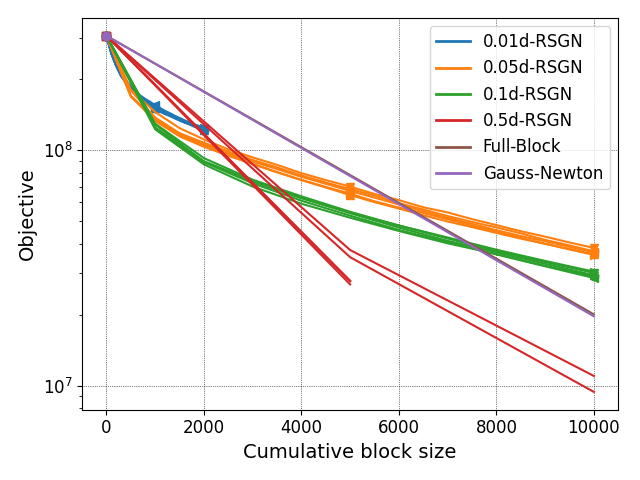}
\end{subfigure}
\caption{ARTIF (left), BRATU2D (middle) and OSCIGRNE (right) objective value against cumulative Jacobian action size for R-SGN with $3$-hashing sketching.}
\label{fig:large_hashing}
\end{figure}


\begin{figure}[!h]
\centering
\includegraphics[width=0.5\textwidth]{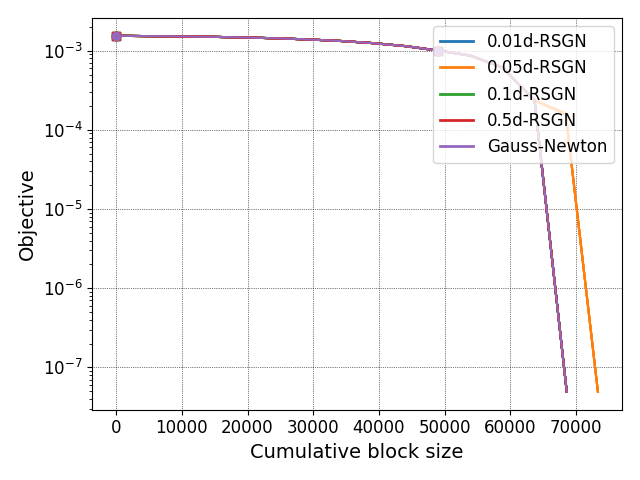}
\caption{BRATU2D objective value against cumulative Jacobian action size for Adaptive RS-GN with coordinate sampling starting at the subspace sizes indicated.}
\label{fig:bratu2d_adaptive}
\end{figure}

\paragraph{Adaptive RS-GN}
Finally, we highlight a variant of RS-GN where we adaptively increase the subspace dimension as the algorithm progresses. Starting from a fixed dimensional subspace, we adaptively increase the subspace by a fixed amount until we achieve a strong form of decrease in the reduced model, 
\[
\hat{m}_k(\sHat_k) \le \kappa \hat{m}_k(0)
\]
where $\sHat_k$ is the reduced trust region step and $\kappa\in(0,1)$ is our adaptivity parameter. In \autoref{fig:bratu2d_adaptive} we show adaptive R-SGN with coordinate sampling for the BRATU2D test problem, starting at the subspace sizes indicated in the figure and the increasing by increments of $500$.

\paragraph{Conclusions} Our preliminary numerical experiments validate our theoretical findings, and complement the simpler (convex)  
experiments on large scale logistic regression problems given in \cite{zhen:icml_BCGN}. 
As a way to improve the scalability of nonconvex optimization algorithms, our proposals here are scalable, in that the size of the subspace can be chosen fixed to a small value which reduces the linear algebra costs and the derivative actions calculations. 
We also note that block-coordinate Gauss-Newton methods have been applied successfully in applications, such as climate modelling  \cite{tett2017calibrating}. In fact, the motivation for our work here was very much inspired by the needs of this application, where full derivatives are incredibly expensive to compute (and even typical model-based derivative-free methods are too expensive to apply).

It is of course possible that other random matrix ensembles and associated scalings may further improve our theoretical and numerical results; though the fact that when $l=d$, we recover the full-dimensional first-order/trust-region or quadratic regularization complexity bounds  seems to imply that this may not be possible in general; but we expect it to be possible for special structured problems \cite{adilet1, adilet2, adilet3}. From a computational point of view, we are hopeful and encouraged by our current numerical results that further general improvements may still be achievable with careful and innovative random-subspace algorithm design, in an inspired combination with deterministic approaches.


\appendix
\section{Proof of the Main Result (\autoref{thm2})}
\label{BCGN:sec3}

The proof of \autoref{thm2} involves a technical analysis of the different types of iterations that can occur. An iteration can be true/false using \autoref{def::true_iters}, successful/unsuccessful (Step 3 of \autoref{alg:generic}) and with an $\alphaK$ above/below a certain value. The parameter $\alphaK$ is important due to \autoref{AA3} and \autoref{AA4} (that is, it influences the success of an iteration; and also the objective decrease in true and successful iterations).

Given that \autoref{alg:generic} runs for $N$ iterations, we use $N$ with different subscripts to denote the total number of the different types of iterations, detailed in \autoref{tab:it::count}.
We note that all iteration sets below are random variables because $\alphaK$, and whether an iteration is true/false, successful/unsuccessful, depend on the random model in Step 1 of \autoref{alg:generic} and the previous (random) steps.
\begin{table}[ht]
\begin{tabular}{ll}
Symbol            & Definition            \\                                    
$\nt$             & Number of true iterations                                         \\
$\nf$             & Number of false iterations                                                                \\
$\nts$            & Number of true and successful iterations                                                  \\
$\ns$             & Number of successful iterations                                                           \\
$\nuMe$             & Number of unsuccessful iterations                                                         \\
$\ntu$            & Number of true and unsuccessful iterations                                                \\
$\ntAlphaUpper$   & Number of true iterations such that $\alpha_k \leq \alphaLowOne$                             \\
$\nsAlphaUpper$   & Number of successful iterations such that $\alpha_k \leq \alphaLowOne$                       \\
$\ntAlphaLower$   & Number of true iterations such that $\alpha_k > \alphaLowOne$                             \\
$\ntsAlphaLower$  & Number of true and successful iterations such that $\alpha_k > \alphaLowOne$              \\
$\ntuAlphaLower$  & Number of true and unsuccessful iterations such that $\alpha_k > \alphaLowOne$            \\
$\nuAlphaLower$   & Number of unsuccessful iterations such that $\alpha_k > \alphaLowOne$                     \\
$\nsGammaAlpha$   & Number of successful iterations such that $\alpha_k > \gamma_1^{c}\alphaLowOne$           \\
$\ntsGammaCAlpha$ & Number of true and successful iterations such that $\alpha_k > \gamma_1^{c}\alphaLowOne$  \\
$\nfsGammaAlpha$  & Number of false and successful iterations such that $\alpha_k > \gamma_1^{c}\alphaLowOne$
\end{tabular}
\caption{List of random variables representing iteration counts given that \autoref{alg:generic} has run for $N$ iterations}
\label{tab:it::count}
\end{table}

The proof of \autoref{thm2} relies on the following three results relating the total number of different types of iterations. 

\paragraph{Relating the total number of true iterations to the total number of iterations}

\autoref{lem::Chernoff} shows that with high probability, a constant fraction of iterations of \autoref{alg:generic} are true. This result is a conditional variant of the Chernoff bound \cite{MR57518}.  

\begin{lemma}
\label{lem::Chernoff}
Let \autoref{AA2} hold with $\delta_S \in (0,1)$. Let \autoref{alg:generic} run for $N$ iterations. Then for any given $\delta_1 \in (0,1)$,
\begin{equation}
\P \left( \nt \leq (1-\delta_S)(1-\delta_1)N \right) \leq \chernoffLowerExponential, \label{Markov}
\end{equation}
where $N_T$ is defined in \autoref{tab:it::count}.
\end{lemma}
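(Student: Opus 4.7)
The plan is to establish \eqref{Markov} via a conditional version of the standard Chernoff bound, adapted to handle the fact that the $T_k$ are not i.i.d.\ but only satisfy the martingale-like structure from \autoref{AA2}: conditional on $x_k$, $T_k$ is Bernoulli with success probability $\geq 1-\deltaS$, and independent of the past.

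First I would apply the exponential Markov inequality: for any $\lambda>0$,
\begin{equation*}
\P\bracket{\nt \leq (1-\deltaS)(1-\deltaOne)N} = \P\bracket{\eToMinusLambda{\nt} \geq \eToMinusLambda{(1-\deltaS)(1-\deltaOne)N}} \leq \frac{\expectation{\eToMinusLambdaSumTk}}{\eToMinusLambda{(1-\deltaS)(1-\deltaOne)N}}.
\end{equation*}
The heart of the argument is then to control $\expectation{\eToMinusLambdaSumTk}$. I would peel off one factor at a time by the tower property, conditioning on $\SZeroDotsToXNMinusOne$. Using \autoref{AA2}, namely that $T_{N-1}$ is conditionally independent of $T_0,\dots,T_{N-2}$ given $x_{N-1}$ and has success probability $p_{N-1} \geq 1-\deltaS$, one gets
\begin{equation*}
\conditionalE{\eToMinusLambda{T_{N-1}}}{\SZeroDotsToXNMinusOne} = 1 + p_{N-1}(\eToMinusLambdaMinusOne) \leq 1 + (\oneMinusDeltaS)(\eToMinusLambdaMinusOne) \leq \exponentialMomentUpper,
\end{equation*}
where the first inequality uses that $\eToMinusLambdaMinusOne<0$ together with $p_{N-1}\geq 1-\deltaS$, and the second uses $1+x\leq e^x$. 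Iterating this argument $N$ times yields $\expectation{\eToMinusLambdaSumTk} \leq \exponentialMomentUpperTotal$.

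Substituting back gives the upper bound $\exp\!\bracket{N(1-\deltaS)(\eToMinusLambdaMinusOne) + \lambda(1-\deltaS)(1-\deltaOne)N}$ for the probability of interest. The last step is the standard Chernoff optimization: setting $\lambda = -\log(1-\deltaOne)>0$ (optimal in $\lambda$) collapses the exponent to $(1-\deltaS)N\squareBracket{\deltaOneComplicated}$. Combining with the elementary inequality $\deltaOneComplicated \leq -\deltaOne^2/2$ valid for $\deltaOne \in (0,1)$ yields the bound $\chernoffLowerExponential$ and completes the proof.

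The only subtle point is the conditioning step: one must be careful that peeling off $\eToMinusLambda{T_k}$ correctly uses the conditional Bernoulli distribution given $x_k$, not an unconditional one, and that the remaining factors $\eToMinusLambda{T_0},\dots,\eToMinusLambda{T_{k-1}}$ are measurable with respect to the conditioning $\sigma$-algebra so they pass through the inner expectation. This is exactly what the conditional independence clause of \autoref{AA2} provides, so the calculation goes through cleanly.
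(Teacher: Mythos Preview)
Your proposal is correct and follows essentially the same route as the paper: exponential Markov inequality, then an inductive/tower-property bound on $\expectation{\eToMinusLambdaSumTk}$ using \autoref{AA2} to get $\exponentialMomentUpperTotal$, then the choice $\lambda=-\log(1-\deltaOne)$ together with $\deltaOneComplicated\leq -\deltaOne^2/2$. The paper isolates the moment bound as a separate lemma proved by induction and handles the Bernoulli expectation via the chord inequality $e^{-\lambda x}\leq 1+(\eToMinusLambdaMinusOne)x$ on $[0,1]$, whereas you compute it directly; these are cosmetically different but the argument is the same.
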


The proof of \autoref{lem::Chernoff} relies on the below technical result.

\begin{lemma}
    \label{lem::ChernoffHelper}
    Let \autoref{AA2} hold with $\delta_S \in (0,1)$. Let $T_k$ be defined in \eqref{eqn::t_k}.
    Then for any $\lambda>0$ and $N\in \N$, we have
        \begin{equation}
            \expectation{\eToMinusLambdaSumTk} \leq \exponentialMomentUpperTotal. \notag
        \end{equation}
\end{lemma}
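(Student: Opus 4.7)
The plan is to proceed by induction on $N$, peeling off the final term $T_{N-1}$ and using the conditional structure of Assumption~\ref{AA2} at each step. The base case $N=0$ is immediate since both sides equal $1$ (empty sum on the left, empty product on the right).

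For the inductive step, assuming the bound for $N-1$ terms, the tower property gives
\begin{align*}
\E\!\left[ e^{-\lambda \sum_{k=0}^{N-1} T_k} \right]
= \E\!\left[ e^{-\lambda \sum_{k=0}^{N-2} T_k} \cdot \E\!\left[ e^{-\lambda T_{N-1}} \,\big|\, T_0,\ldots,T_{N-2}, x_{N-1} \right] \right].
\end{align*}
Assumption~\ref{AA2} asserts that $T_{N-1}$ is conditionally independent of $T_0,\ldots,T_{N-2}$ given $x_{N-1}$, so the inner conditional expectation collapses to $\E[e^{-\lambda T_{N-1}} \mid x_{N-1}]$. Because $T_{N-1}\in\{0,1\}$,
\begin{align*}
\E\!\left[ e^{-\lambda T_{N-1}} \,\big|\, x_{N-1} \right]
= 1 + (e^{-\lambda}-1)\,\P(T_{N-1}=1 \mid x_{N-1}).
\end{align*}
Since $\lambda>0$ makes $e^{-\lambda}-1\le 0$ and Assumption~\ref{AA2} gives $\P(T_{N-1}=1 \mid x_{N-1})\ge 1-\delta_S$, the right-hand side is at most $1+(e^{-\lambda}-1)(1-\delta_S)$, and the elementary inequality $1+x\le e^x$ then bounds this by $e^{(e^{-\lambda}-1)(1-\delta_S)}$ deterministically (i.e., independent of the realisation of $x_{N-1}$).

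Plugging this constant back into the outer expectation factors it out, and the inductive hypothesis applied to $\sum_{k=0}^{N-2} T_k$ finishes the induction. There is no real obstacle here; the only point worth flagging is that conditional independence (and not just the marginal probability bound) in Assumption~\ref{AA2} is essential, since it is what lets us replace conditioning on the entire past $T_0,\ldots,T_{N-2}, x_{N-1}$ by conditioning on $x_{N-1}$ alone, and thereby invoke the uniform lower bound $1-\delta_S$. Everything else is the standard Chernoff-style moment-generating-function calculation for Bernoulli random variables.
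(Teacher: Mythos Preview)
Your proof is correct and follows essentially the same approach as the paper: induction on $N$, peeling off $T_{N-1}$ via the tower property, reducing the conditioning to $x_{N-1}$ by conditional independence, and bounding the Bernoulli moment-generating function using $1+x\le e^x$. The only cosmetic differences are that the paper starts the induction at $N=1$ rather than $N=0$, and bounds $\E[e^{-\lambda T}]$ via a convexity inequality instead of computing it exactly for a $\{0,1\}$-valued variable.
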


\begin{proof}
Let $\lambda >0$. We use induction on $N$. For $N=1$, we want to show
        $\expectation{e^{-\lambda T_0}} \leq \exponentialMomentUpper$.
      Let $g(x) = e^{-\lambda x}$. Note that 
    \begin{equation}
        g(x) \leq g(0) + \left[ g(1) - g(0) \right]x, \text{ for any $x\in [0,1]$}, \label{eqn::convex}
    \end{equation}
    because $g(x)$ is convex. Substituting $x=T_0$, we have
        $\eToMinusLambda{T_0} \leq 1 + (\eToMinusLambdaMinusOne) T_0$.
       Passing to  expectation in the latter, we have that
    \begin{equation}
        \expectation{\eToMinusLambda{T_0}}  
        \leq 1 + (\eToMinusLambdaMinusOne)\expectation{T_0} . \label{eqn::inductionZeroUpper}
    \end{equation}
    Moreover, we have
       $ \expectation{T_0} \geq \probability{T_0=1} \geq 1-\delta_S$,
    where the first inequality is due to $T_0\geq 0$ and the second inequality, to \autoref{AA2}.
    Therefore, noting that $\eToMinusLambdaMinusOne <0$, \eqref{eqn::inductionZeroUpper} gives
    \begin{equation}
        \label{eqn::inductionFirstConclusion}
        \expectation{\eToMinusLambda{T_0}} 
        \leq 1 + (\eToMinusLambdaMinusOne) (1-\delta_S) \leq \exponentialMomentUpper, 
    \end{equation}
    where the last inequality comes from $1+y \leq e^y$ for $y \in \R$.
    
    Having completed the initial step for the induction, let us assume
    \begin{equation}
        \label{eqn::inductionAssumption}
        \expectation{  \eToMinusLambdaSumTkNMinusTwo } \leq \left[ \exponentialMomentUpper\right]^{N-1}.
    \end{equation}
    We have 
    \begin{align}
        & \expectation{\eToMinusLambdaSumTk} \notag 
        = \expectation{\conditionalE{\eToMinusLambdaSumTk}{\SZeroDotsToXNMinusOne}}\notag \\
        &= \expectation{\eToMinusLambda{\sumTkToNMinusTwo} \conditionalE{\eToMinusLambda{T_{N-1}}}{\SZeroDotsToXNMinusOne}} 
        = \expectation{ \eToMinusLambda{ \sumTkToNMinusTwo } \conditionalE{ \eToMinusLambda{ T_{N-1} }}{ x_{N-1}}}, \label{eqn::inductionTotal}
    \end{align}
    where the first equality is due to the Tower property and the last equality follows from $T_{N-1}$ being conditionally independent of the past iterations $T_0, T_1, \dots, T_{N-2}$ given $x_{N-1}$ (see \autoref{AA2}).
    Substituting $x=T_{N-1}$ in \eqref{eqn::convex}, and taking conditional expectations, we have that
    \begin{equation}
        \conditionalE{\eToMinusLambda{T_{N-1}}}{x_{N-1}}
        \leq 1 + (\eToMinusLambdaMinusOne) \conditionalE{T_{N-1}}{x_{N-1}}.\notag
    \end{equation}
    On the other hand, we have that $\conditionalE{T_{N-1}}{x_{N-1}} \geq \conditionalP{T_{N-1}=1}{x_{N-1}} \geq 1-\delta_S$, where we used $T_{N-1} \geq 0$ to derive the first inequality and $\conditionalP{T_{N-1}=1}{x_{N-1} = \bar{x}_{N-1}} \geq 1 - \delta$ for any $\bar{x}_{N-1}$ (see \autoref{AA2}) to derive the second inequality. 
    Hence, we obtain the corresponding relation to \eqref{eqn::inductionFirstConclusion}, namely,
    \begin{equation}
        \conditionalE{\eToMinusLambda{T_{N-1}}}{x_{N-1}}\leq \exponentialMomentUpper.
    \end{equation}
        It then follows from \eqref{eqn::inductionTotal} that
    \begin{equation}
        \expectation{\eToMinusLambdaSumTk} 
        \leq \exponentialMomentUpper \expectation{ \eToMinusLambda{ \sumTkToNMinusTwo }} 
        \leq \exponentialMomentUpperTotal, \notag
    \end{equation}
    where we used \eqref{eqn::inductionAssumption} to obtain the last inequality.
\end{proof}

\begin{proof}[Proof of \autoref{lem::Chernoff}]
Note that since $N$ is the total number of iterations, we have $N_T = \sumTk$, where $T_k$ is defined in \eqref{eqn::t_k}. Applying Markov inequality, we have that for any $\lambda >0$,
\begin{align}
    &\probability{N_T \leq (1-\delta_S)(1-\delta_1) N}
    = \probability{\eToMinusLambda{N_T} \geq \eToMinusLambda{ (1-\delta_S) (1-\delta_1) N }}  \notag \\
    & \leq \expectation{\eToMinusLambda{N_T}} e^{\lambda(1-\delta_S) (1-\delta_1) N}
    = \expectation{ \eToMinusLambdaSumTk} e^{\lambda(1-\delta_S) (1-\delta_1) N}
    \leq e^{ N(\eToMinusLambdaMinusOne)(1-\delta_S)+\lambda(1-\delta_S) (1-\delta_1) N }, \label{eqn::Chernoff_proof1}
\end{align}
where we used Lemma \ref{lem::ChernoffHelper} to derive the last inequality. 
Choosing $\lambda = -\log(1-\delta_1) >0$, we have from \eqref{eqn::Chernoff_proof1}
$$
    \probability{N_T \leq (1-\delta_S)(1-\delta_1) N } 
    \leq e^{N (1-\delta_S) \left[ \deltaOneComplicated \right]} 
    \leq \chernoffLowerExponential, \notag
$$
where we used $\deltaOneComplicated \leq -\delta_1^2/2$ for $\delta_1 \in (0,1)$. 
\end{proof}



\paragraph{Relating the total number of true iterations with $\alphaK \leq \alphaMin$ to the total number of iterations}

The next Lemma shows that we can have at most a constant fraction of iterations of \autoref{alg:generic} that are true with $\alphaK \leq \alphaMin$.
\begin{lemma} \label{lm::Gratton}
    Let \autoref{AA3} hold with $\alphaLow>0$ and $c\in \N^+$ and let $\alphaMin$ associated with $\alphaLow$ be defined in \eqref{eqn::alphaMin} with $\newL \in \N^+$. Let $\epsilon>0$, $N \in \N$ be the total number of iterations; and $\ntAlphaUpper$ be defined in \autoref{tab:it::count}. Suppose $N\leq \nEps$. Then 
	\begin{equation}
	    \ntAlphaUpper \leq \frac{N}{c+1}. \label{eqn::tmp8}
	\end{equation}
\end{lemma}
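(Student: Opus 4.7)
The plan is to exploit the multiplicative structure of the stepsize update by telescoping from iteration $0$ up to a carefully chosen terminal index just after the \emph{last} true iteration with $\alphaK \leq \alphaMin$. Write $A \deq \ntAlphaUpper$ and assume $A \geq 1$ (otherwise the bound is trivial). Let $k^* \in \{0, 1, \ldots, N-1\}$ be the largest index such that iteration $k^*$ is true with $\alpha_{k^*} \leq \alphaMin$. Since $k^* < N \leq \nEps$ and $\alpha_{k^*} \leq \alphaMin \leq \alphaLow$ by \autoref{lem::alphaMin}, \autoref{AA3} implies iteration $k^*$ is successful. Because \autoref{lem::alphaMin} also gives $\alphaMin \leq \alphaZero/\gammaTwo$, we have $\gammaTwo \alpha_{k^*} \leq \alphaZero \leq \alphaMax$, so the ``$\min$'' in the update does not activate at step $k^*$: $\alpha_{k^*+1} = \gammaTwo \alpha_{k^*} \leq \alphaZero$.

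Next, I would telescope $\alpha_{k^*+1}/\alphaZero = \prod_{k=0}^{k^*} (\alphaKPlusOne/\alphaK) \leq 1$ and take the logarithm base $1/\gammaOne$ of both sides, yielding
\begin{equation*}
\sum_{k=0}^{k^*} \log_{1/\gammaOne}(\alphaKPlusOne/\alphaK) \leq 0.
\end{equation*}
I would then classify the terms by iteration type. Each iteration in $\ntAlphaUpper$ is, by the same argument used for $k^*$, a successful uncapped step, so $\alphaKPlusOne/\alphaK = \gammaTwo = \gammaOne^{-c}$, contributing $+c$ to the sum. Each unsuccessful iteration has $\alphaKPlusOne/\alphaK = \gammaOne$, contributing $-1$. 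Each remaining successful iteration has $\alphaKPlusOne \geq \alphaK$ (whether capped or not), so it contributes a non-negative value. Writing $C$ for the number of unsuccessful iterations in $\{0, 1, \ldots, k^*\}$, this gives $cA - C \leq 0$, i.e., $cA \leq C$.

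Finally, the $k^*+1 \leq N$ iterations in $\{0, \ldots, k^*\}$ partition into the $A$ true iterations with $\alphaK \leq \alphaMin$, some other (successful) iterations, and the $C$ unsuccessful ones, so $A + C \leq k^* + 1 \leq N$, giving $C \leq N - A$. Combining with $cA \leq C$ yields $(c+1)A \leq N$, which is the claim. The only genuine subtlety is the choice to truncate the telescope at $k^*+1$ rather than $N$: this is precisely what buys the tight bound $\alpha_{k^*+1} \leq \alphaZero$ (much stronger than the blanket $\alphaN \leq \alphaMax$) and absorbs what would otherwise appear as an additive $p/(c+1)$ slack, producing the clean constant $1/(c+1)$ as stated.
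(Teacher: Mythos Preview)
Your proof is correct and uses essentially the same telescoping argument as the paper: truncate at the last iteration of the relevant type, bound $\alpha$ at the terminal index by $\alphaZero$ (or $\gammaTwo\alphaMin$), and balance the $+c$ contributions of the small-$\alpha$ successful steps against the $-1$ contributions of the unsuccessful ones. The only cosmetic difference is bookkeeping: the paper first bounds $\nsAlphaUpper$ (taking $\bar{k}$ to be the last \emph{successful} iteration with $\alphaK \leq \alphaMin$ and using $\alpha_{\bar{k}+1}\leq \gammaTwo\alphaMin$ together with $\newL\geq c$) and then invokes $\ntAlphaUpper \leq \nsAlphaUpper$, whereas you bound $\ntAlphaUpper$ directly via a three-way split and the tighter $\alpha_{k^*+1}\leq \alphaZero$.
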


\begin{proof}
Let $k\leq N-1$ since the total number of iterations taken by \autoref{alg:generic}  is assumed to be $N$.
It follows from $N\leq \nEps$ that $k < \nEps$ and by definition of
$\alphaMin$ (\autoref{lem::alphaMin}), iteration $k$ being true with $\alpha_k \leq \alphaMin$ implies that iteration $k$ is successful (with $\alpha_k \leq \alphaMin$). 
Therefore we have 
\begin{equation}
    \ntAlphaUpper \leq \nsAlphaUpper. \label{eqn::tmp6}
\end{equation}
If $\nsAlphaUpper =0$, then $\ntAlphaUpper=0$ and \eqref{eqn::tmp8} holds. Otherwise let 
\begin{equation}
    \kBar = \max \set{k\leq N-1: \text{iteration $k$ is successful and $\alpha_k \leq \alphaLowOne$}}.\label{eqn::tmp5}
\end{equation}
Then for each $k \in \set{0, 1, \dots, \kBar}$, we have that either iteration $k$ is successful and $\alpha_k \leq \alphaLowOne$, in which case $\alpha_{k+1} = \gamma_2 \alpha_k$ (note that \eqref{eqn::alphaMinUpperByGammaTwoOverAlphaZero} and $\alphaK\leq \alphaMin$ ensure $\max \set{\gammaTwo \alphaK, \alphaMax} = \gammaTwo\alphaK$); or otherwise $\alpha_{k+1} \geq \gamma_1 \alpha_k$ (which is true for any iteration of \autoref{alg:generic}). Hence after $\kBar+1$ iterations, we have
\begin{align}
    \alpha_{\kBar+1} \geq \alpha_0 \gamma_2^{\nsAlphaUpper} \gamma_1^{\kBar+1 -\nsAlphaUpper} 
    = \alpha_0 \left( \frac{\gamma_2}{\gamma_1}\right)^{\nsAlphaUpper}\gamma_1^{\kBar+1}
    \geq \alpha_0 \left( \frac{\gamma_2}{\gamma_1}\right)^{\nsAlphaUpper}\gamma_1^{N}, \label{eqn:tmp29}
\end{align}
where we used $\kBar+1\leq N$ in the last inequality. 
On the other hand, we have 
  $\alpha_{\kBar+1} = \gamma_2 \alpha_{\kBar} \leq \gamma_2 \alphaLowOne$, 
due to the fact that iteration $\kBar$ is successful, $\alpha_{\kBar}\leq \alphaLowOne$ and \eqref{eqn::tmp5}.
Therefore, combining these with \eqref{eqn:tmp29}, we have $\gamma_2\alphaLowOne \geq \alpha_{\kBar+1} \geq \alpha_0 \left( \frac{\gamma_2}{\gamma_1}\right)^{\nsAlphaUpper}\gamma_1^{N}$. Taking logarithm on both sides, we have
\begin{equation}
    \log(\gamma_2 \alphaLowOne) \geq \log(\alpha_0) + \nsAlphaUpper\log(\frac{\gamma_2}{\gamma_1}) + N \log(\gamma_1),\notag
\end{equation}
which rearranged, gives
\begin{equation}
    \nsAlphaUpper \leq p_0 N + p_1, \notag
\end{equation}
with $p_0 = \frac{\logOneOverGammaOne}{\logGammaTwoOverGammaOne} = \frac{1}{c+1}$ and $p_1 = \pOneDef = \frac{c-\newL}{c+1} \leq 0$ as $\newL \geq c>0$. Therefore we have $\nsAlphaUpper \leq \frac{N}{c+1}$ and \eqref{eqn::tmp6} then gives the desired result. 
\end{proof}


\paragraph{Relating the number of unsuccessful and successful iterations}

The next Lemma extends to the case of random models a common result for deterministic and adaptive nonlinear optimization algorithms. It formalises the intuition that one cannot have too many unsuccessful iterations with $\alphaK > \alphaMin$ compared to successful iterations with $\alphaK > \gammaOneC \alphaMin$, since unsuccessful iterations reduce $\alphaK$ and only successful iterations with $\alphaK > \gammaOneC \alphaMin$ may compensate for these decreases. The conditions that $\alphaMin = \alphaZero\gammaOne^\newL$, $\gammaTwo = \frac{1}{\gammaOneC}$ and $\alphaMax = \alphaZero \gammaOne^p$ for some $\newL, c, p \in \N^+$ are crucial in the (technical) proof. 

\begin{lemma}\label{lm::Katya}
	Let \autoref{AA3} hold with $\alphaLow>0$. Let $\alphaMin$ associated with $\alphaLow$ be defined in \eqref{eqn::alphaMin} with $\newL\in \N^+$. Let $N \in \N$ be the total number of iterations of \autoref{alg:generic} and $\nuAlphaLower$, $\nsGammaAlpha$ be defined in \autoref{tab:it::count}. Then
	\begin{equation}
	    \nuAlphaLower \leq \newL + c\nsGammaAlpha. \notag
	\end{equation}
\end{lemma}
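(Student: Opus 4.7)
The idea is to track the integer-valued ``exponent'' $n_k := \log_{\gammaOne}(\alphaK/\alphaZero)$, so that $\alphaK = \alphaZero \gammaOne^{n_k}$ with $n_0 = 0$, and the $\alphaMax$-cap translates into the bound $n_k \geq -p$ (because $\alphaMax = \alphaZero/\gammaOne^p$). The update rule of \autoref{alg:generic} then reads
\begin{equation*}
n_{k+1} = n_k + 1 \quad \text{if iteration $k$ is unsuccessful}, \qquad n_{k+1} = \max(-p,\, n_k - c) \quad \text{if successful},
\end{equation*}
and in particular $n_{k+1} - n_k \geq -c$ on every successful iteration. The indicator conditions translate to $\alphaK > \alphaMin \iff n_k < \newL$ and $\alphaK > \gammaOneC\alphaMin \iff n_k < \newL + c$.

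The key step is to apply a telescoping argument to the \emph{truncated} quantity $n_k^{\star} := \min(n_k, \newL)$. Since $n_0^{\star} = 0$ and $n_N^{\star} \leq \newL$ by construction,
\begin{equation*}
\newL \; \geq \; n_N^{\star} \; = \; \sum_{k=0}^{N-1} \bigl(n_{k+1}^{\star} - n_k^{\star}\bigr).
\end{equation*}
A short case analysis of the one-step change $n_{k+1}^{\star} - n_k^{\star}$ in four categories then gives: unsuccessful with $n_k < \newL$ contributes exactly $+1$ (since $n_k + 1 \leq \newL$ still); unsuccessful with $n_k \geq \newL$ contributes $0$ (both $n_k^{\star}$ and $n_{k+1}^{\star}$ equal $\newL$); successful with $n_k < \newL + c$ contributes $\geq -c$ (using $n_{k+1} \geq n_k - c$ and monotonicity of $\min(\cdot, \newL)$); and successful with $n_k \geq \newL + c$ contributes $0$, because then $n_{k+1} \geq n_k - c \geq \newL$ pins both truncations at $\newL$. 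Plugging these bounds into the telescoping identity gives $\newL \geq \nuAlphaLower \cdot 1 - \nsGammaAlpha \cdot c$, which rearranges to the claimed $\nuAlphaLower \leq \newL + c\,\nsGammaAlpha$.

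The main technical obstacle is handling the $\alphaMax$-cap cleanly: a successful iteration with $n_k - c < -p$ sets $n_{k+1} = -p$ rather than $n_k - c$, so the change is $-p - n_k$ instead of $-c$. Fortunately this only increases $n_{k+1}$, so the inequality $n_{k+1} - n_k \geq -c$ still holds; moreover such a cap can only trigger when $n_k < c - p \leq \newL + c$ (using $\newL \geq c$ and $p \geq 1$ from \autoref{lem::alphaMin}), so it is safely absorbed into the $S_1$ (bookkeeping) case and the bound $\geq -c$ on its contribution to the telescoping sum remains valid. Everything else is routine algebra.
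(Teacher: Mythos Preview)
Your proof is correct and is genuinely cleaner than the paper's own argument. The paper also works with the integer exponent $\beta_k = \log_{\gammaOne}(\alphaK/\alphaZero)$, but instead of truncating, it partitions $\{0,\dots,N-1\}$ into successive ``excursion'' intervals $(\kStartI,\kEndI)$ delimited by the times $\beta_k$ hits $\newL$, and in each interval separately bounds the number of unsuccessful steps with $\beta_k<\newL$ against the number of successful steps with $\beta_k<\newL+c$, obtaining $n_1^{(1)}\le c\,n_2^{(1)}+\newL$ for the first interval and $n_1^{(i)}\le c\,n_2^{(i)}+c$ for later ones (with careful bookkeeping that the gap between $\kEndI$ and $\kStartIPlusOne$ contributes $0$ to $\nuAlphaLower$ and exactly $1$ to $\nsGammaAlpha$). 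Summing these per-interval bounds then gives the claim. Your potential-function approach with $n_k^\star=\min(n_k,\newL)$ collapses all of that bookkeeping into a single telescoping sum and a four-case analysis of the one-step increment; the $\alphaMax$-cap is handled uniformly via $n_{k+1}\ge n_k-c$ rather than by a separate case, and the integrality of $n_k$ (crucial for the ``$+1$'' contribution when $n_k<\newL$) does the rest. The two routes use the same underlying dynamics, but yours avoids the recursive interval construction and the associated termination argument.
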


\begin{proof}
    Define 
    \begin{equation}
    \betaK = \logBaseGammaOne{\frac{\alphaK}{\alphaZero}}. \label{eqn::betaKDef}
    \end{equation}
    Note that since $\alpha_{k+1} = \gamma_1 \alpha_k$ if iteration $k$ is successful and $\alpha_{k+1} = \minMe{\alphaMax}{\gamma_2 \alphaK}$ otherwise, $\gamma_2 = \gammaTwoExpression$ and $\alpha_{max} = \alpha_0 \gamma_1^p$ with $c,p \in \N^+$, we have that $\betaK \in \Z$. Moreover, we have that $\alphaK=\alphaZero$ corresponds to $\betaK = 0$, $\alphaK=\alphaMin$ corresponds to $\betaK = \newL$ and $\alphaK = \gamma^c \alphaMin$ corresponds to $\betaK= \newL+c$. Note also that on successful iterations, we have $\alphaKPlusOne \leq \gammaTwo \alphaK = \gammaOne^{-c} \alphaK$ (as $\alphaKPlusOne = \minMe{\alphaMax}{\gammaTwo\alphaK}$) so that $\betaKPlusOne \geq \betaK -c$; and on unsuccessful iterations, we have $\betaKPlusOne = \betaK +1$.
	
	Let $\kStartOne=-1$; and define the following sets.
	\begin{align}
	    &\aOne = \set{k \in  \openClosedInterval{\kStartOne}{N-1}\intersect \N: \betaK=\newL}. \label{eqn::tmp11} \\
	    & \kEndOne = \twoCases{\inf \aOne}{\text{if $\aOne \neq \emptyset$}}{N}{\text{otherwise}}  \notag \\
	    & \mOneOne = \set{k \in \openInterval{\kStartOne}{\kEndOne}:  \text{iteration $k$ is unsuccessful with $\betaK < \newL$}} \notag \\
	    & \mTwoOne = \set{k \in \openInterval{\kStartOne}{\kEndOne}:  \text{iteration $k$ is successful with $\betaK < \newL+c $ }} \label{eqn::tmp12}.
	\end{align}
	Let $\nOneOne = |\mOneOne|$ and $\nTwoOne=| \mTwoOne|$, where $|.|$ denotes the cardinality of a set. 
	
	If $\kEndOne <N$, we have that $\kEndOne$ is the first time $\betaK$ reaches $\newL$. Because $\betaK$ starts at $0<\newL$ when $k=0$; $\betaK$ increases by one on unsuccessful iterations and decreases by an integer on successful iterations (so that $\betaK$ remains an integer). So for $\kInStartEndOne$, all iterates have $\betaK < \newL < \newL+c$. It follows then the number of successful/unsuccessful iterations for $\kInStartEndOne$ are precisely $\nOneOne$ and $\nOneTwo$ respectively. Because $\betaK$ decreases by at most $c$ on successful iterations, increases by one on unsuccessful iterations, starts at zero and $\beta_{\kEndOne}\leq \newL$, we have $0 + \nOneOne - c\nTwoOne \leq \newL$ (using $\beta_{\kEndOne} \geq \beta_{\kStartI+1} + \nOneOne - c\nTwoOne$). Rearranging gives
	\begin{equation}
	    \nOneOne \leq c \nTwoOne + \newL. \label{eqn::tmp3}
	\end{equation}
	
	If $\kEndOne = N$, then we have that $\betaK <\newL$ for all $k \leq N-1$ and so $\beta_{\kEndOne} \leq \newL$. In this case we can derive \eqref{eqn::tmp3} using the same argument. Moreover, since $\kEndOne=N$, we have that 
	\begin{align}
	    \nOneOne &= \nuAlphaLower, \label{eqn:tmp30} \\
	    \nOneTwo &= \nsGammaAlpha. \label{eqn:tmp31}
	\end{align}
	The desired result then follows. 
	
	Hence we only need to continue in the case where $\kEndOne < N$, in which case
	let
	\begin{align}
	    \bOne & = \set{k \in \closedInterval{\kEndOne}{N-1}: \text{iteration $k$ is successful with $\betaK < \newL+c $ } } \notag \\
	    \kStartTwo &= \twoCases{\inf \bOne}{\text{if $\bOne \neq \emptyset$}}{N}{\text{otherwise}}. \notag
	\end{align}

	Note that there is no contribution to $\nsGammaAlpha$ or $\nuAlphaLower$ for $k \in \closedOpenInterval{\kEndOne}{\kStartTwo}$. 
	There is no contribution to $\nsGammaAlpha$ because $\kStartTwo$ is the first iteration (if any) that would make this contribution. 
	Moreover, since $\beta_{\kEndOne}=\newL$ by definition of $\kEndOne$, the first iteration with $\betaK<\newL$ for $k\geq \kEndOne$ must be proceeded by a successful iteration with $\betaK< \newL+c$ (note that in particular, since $\kStartTwo$ is the first such iteration, we have $\beta_{\kStartTwo}\geq \newL$). 
	Therefore there is no contribution to $\nuAlphaLower$ either for $k \in \closedOpenInterval{\kEndOne}{\kStartTwo}$. Hence if $\kStartTwo = N$, we have \eqref{eqn:tmp30}, \eqref{eqn:tmp31} and \eqref{eqn::tmp3} gives the desired result. 
	
	Otherwise similarly to \eqref{eqn::tmp11}--\eqref{eqn::tmp12},
	let 
	\begin{align}
	    &\aTwo = \set{k \in \openClosedInterval{\kStartTwo}{N-1} \intersect \N: \betaK=\newL}. \notag \\
	    & \kEndTwo = \twoCases{\inf \aTwo }{\text{if $\aTwo \neq \emptyset$}}{N}{\text{otherwise}} \notag \\
	    & \mOneTwo = \set{k \in \openInterval{\kStartTwo}{\kEndTwo}: \text{iteration $k$ is unsuccessful with $\betaK < \newL$}} \notag \\
	    & \mTwoTwo = \set{k \in \openInterval{\kStartTwo}{\kEndTwo}: \text{iteration $k$ is successful with $\betaK < \newL+c $ }}. \notag
	\end{align}	
	And let $\nOneTwo = |\mOneTwo|$ and $\nTwoTwo=| \mTwoTwo|$. Note that for $k\in \openInterval{\kStartTwo}{\kEndTwo}$, we have $\newL-c \leq \beta_{\kStartTwo+1}$ and $\beta_{\kEndTwo} \leq \newL$ (the former is true as $\beta_{\kStartTwo\geq l}$ and iteration $\kStartTwo$ is successful). Therefore we have
	\begin{equation}
	    \newL-c + \nOneTwo-c\nTwoTwo 
	    \leq \beta_{\kStartTwo+1} + \nOneTwo - c\nTwoTwo \leq \beta_{\kEndTwo} \leq \newL.  \notag
	\end{equation}
	Rearranging gives
	\begin{equation}
	    \nOneTwo \leq c \nTwoTwo+ \newL - [\newL-c] = c\nTwoTwo +c, \label{eqn::tmp4}
	\end{equation}
	
	Let $\hatNOneOne$ be the total number of iterations contributing to $\nuAlphaLower$ with $k \in \closedInterval{\kEndOne}{\kStartTwo}$; and $\hatNOneTwo$ be the total number of iterations contributing to $\nsGammaAlpha$ with $k \in \closedInterval{\kEndOne}{\kStartTwo}$. Since there is no contribution to either for $k\in \closedOpenInterval{\kEndOne}{\kStartTwo}$ as argued before, and iteration $\kStartTwo$ by definition contributes to $\nsGammaAlpha$ by one, we have
	\begin{align}
	    \hatNOneOne = 0, \label{eqn::tmp13}\\
	    \hatNOneTwo = 1.\label{eqn::tmp14}
	\end{align}
	
	Using \eqref{eqn::tmp3}, \eqref{eqn::tmp4}, \eqref{eqn::tmp13} and \eqref{eqn::tmp14},we have
	\begin{equation}
	    \nOneOne+ \hatNOneOne + \nOneTwo \leq c \left( \nTwoOne + \hatNOneTwo + \nTwoTwo \right) +\newL. \label{eqn::tmp15}
	\end{equation}
	If $\kEndTwo = N$ the desired result follows. 
	Otherwise define $\bTwo$ in terms of $\kEndTwo$, 
	and $\kStartThree$ in terms of $\bTwo$ similarly as before. 
	If $\kStartThree=N$, 
	then we have the desired result as before. 
	Otherwise repeat what we have done (define $A^{(3)}$,
	$k_{end}^{(3)}$, $M_1^{(3)}$, $M_2^{(3)}$ etc). 
	Note that we will reach either $\kEndI = N$ 
	for some $i\in \N$ 
	or $\kStartI = N$ for some $i \in \N$, 
	because if $\kEndI<N$ and $\kStartI <N$ for all $i$, 
	we have that $\kStartI < \kEndI \leq \kStartIPlusOne$ 
	by definitions. So $\kStartI$ is strictly increasing,
	contradicting $\kStartI <N$ for all $i$. 
	In the case wither $\kEndI=N$ or $\kStartI=N$, 
	the desired result will follow using our previous argument.
\end{proof}

\paragraph{An intermediate result bounding the total number of iterations}
Using \autoref{lem::Chernoff}, \autoref{lm::Gratton} and \autoref{lm::Katya}, we prove an upper bound on the total number of iterations of \autoref{alg:generic} in terms of the number of true and successful iterations when $\alphaK$ is sufficiently large.

\begin{lemma}\label{lem:AssOneTwo}
    Let \autoref{AA2} and \autoref{AA3} hold with 
    $ \deltaS \in (0,1)$, $c, \newL \in \N^+$. 
    Let $N$ be the total number of iterations. 
    Then for any $\deltaOne \in (0,1)$ such that
    $\gDeltaSDeltaOne >0 $,   where $\gDeltaSDeltaOne$ is defined in \eqref{eqn:gDeltaSDeltaOneDef}, we have that 
    $$
        \probability{N < \gDeltaSDeltaOne \squareBracket{
         \ntsAlphaZeroGammaOneCL
         + \frac{\newL}{1+c}}} \geq 
         1- \chernoffLowerExponential.
    $$
\end{lemma}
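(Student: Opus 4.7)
My plan is to combine \autoref{lem::Chernoff}, \autoref{lm::Gratton} and \autoref{lm::Katya} through a careful bookkeeping of the iteration counts listed in \autoref{tab:it::count}. The argument is carried out under $N \leq \nEps$ so that \autoref{lm::Gratton} is applicable; this is sufficient because on $\{N > \nEps\}$ the conclusion sought by \autoref{thm2} already holds, and this is the only place the present lemma is subsequently invoked.

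First, I would decompose $N = N_T + \nf$ and $N_T = \ntAlphaUpper + \ntsAlphaLower + \ntuAlphaLower$. \autoref{lm::Gratton} gives $\ntAlphaUpper \leq N/(c+1)$, hence $\ntsAlphaLower + \ntuAlphaLower \geq N_T - N/(c+1)$. \autoref{lm::Katya} yields $\ntuAlphaLower \leq \nuAlphaLower \leq \newL + c\,\nsGammaAlpha$. Using the obvious inclusions $\nsGammaAlpha \leq \ntsGammaCAlpha + \nf$ and $\ntsAlphaLower \leq \ntsGammaCAlpha$ (the latter since $\gammaOneC \alphaMin < \alphaMin$), these inequalities combine into
\[
(1+c)\,\ntsGammaCAlpha + \newL + c\,\nf \;\geq\; N_T - N/(c+1).
\]

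Second, I would substitute $\nf = N - N_T$, collect the $N_T$ terms, and simplify using the identity $\tfrac{c}{c+1} + \tfrac{1}{(c+1)^2} = 1 - \tfrac{c}{(c+1)^2}$ to arrive at the deterministic bound
\[
\ntsGammaCAlpha + \tfrac{\newL}{c+1} \;\geq\; N_T - N\bigl[\,1 - \tfrac{c}{(c+1)^2}\,\bigr].
\]

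Finally, I would invoke \autoref{lem::Chernoff}: on an event of probability at least $1 - \chernoffLowerExponential$, one has $N_T > (1-\delta_S)(1-\delta_1) N$, so the right-hand side of the previous display strictly exceeds $N\bigl[(1-\delta_S)(1-\delta_1) - 1 + \tfrac{c}{(c+1)^2}\bigr] = N/\gDeltaSDeltaOne$, a quantity that is positive by the hypothesis $\gDeltaSDeltaOne > 0$. Rearranging, and noting that $\alphaMin\gammaOneC = \alphaZero\gammaOne^{c+\newL}$ so that $\ntsGammaCAlpha = \ntsAlphaZeroGammaOneCL$, delivers $N < \gDeltaSDeltaOne\,[\,\ntsAlphaZeroGammaOneCL + \newL/(c+1)\,]$ on this event, which is the claim. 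The main obstacle is setting up the iteration-class bookkeeping correctly so that the three auxiliary lemmas chain together; once this decomposition is in place, the only nontrivial simplification is the algebraic identity above.
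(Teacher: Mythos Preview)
Your proposal is correct and follows essentially the same route as the paper: both arguments decompose $N_T$ according to \autoref{tab:it::count}, apply \autoref{lm::Gratton} and \autoref{lm::Katya} together with the inclusions $\ntuAlphaLower\leq\nuAlphaLower$, $\nfsGammaAlpha\leq\nf=N-N_T$ and $\ntsAlphaLower\leq\ntsGammaCAlpha$, then finish with \autoref{lem::Chernoff}; the only difference is that the paper first isolates an upper bound on $N_T$ (their display \eqref{tmp:2021-12-31-2}) whereas you carry the same inequality in the equivalent rearranged form $\ntsGammaCAlpha+\newL/(c+1)\geq N_T-N[\,1-c/(c+1)^2\,]$. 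Your explicit remark that \autoref{lm::Gratton} is only invoked under $N\leq\nEps$ is in fact a point the paper leaves implicit (the hypothesis is not stated in \autoref{lem:AssOneTwo} but is used in its proof and supplied later in the proof of \autoref{thm2}).
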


\begin{proof}
We decompose the number of true iterations as 
\begin{equation}
    \nt = \ntAlphaUpper + \ntAlphaLower = \ntAlphaUpper + \ntsAlphaLower + \ntuAlphaLower \leq \ntAlphaUpper + \ntsAlphaLower + \nuAlphaLower, \label{eqn::tmp7}
\end{equation}
where $\nt, \ntAlphaUpper, \ntAlphaLower, \ntsAlphaLower, \ntuAlphaLower, \nuAlphaLower$ are defined in \autoref{tab:it::count}.
From \autoref{lm::Katya}, we have
\begin{align}
    \nuAlphaLower & \leq \tCOne + \tCTwo \nsGammaAlpha 
     = \tCOne + \tCTwo\ntsGammaCAlpha + \tCTwo\nfsGammaAlpha \notag \\
    & \leq \tCOne +\tCTwo \ntsGammaCAlpha + \tCTwo\nf 
     \leq \tCOne + \tCTwo \ntsGammaCAlpha + \tCTwo(N-\nt). \notag
\end{align}
It then follows from \eqref{eqn::tmp7} that
    $\nt \leq \ntAlphaUpper + \ntsAlphaLower + \tCOne + \tCTwo \ntsGammaCAlpha + \tCTwo(N-\nt)$.
Rearranging, we have
\begin{equation}
    \nt \leq \frac{\ntAlphaUpper}{\onePlusTCTwo}+ \frac{1}{\onePlusTCTwo}\squareBracket{\ntsAlphaLower + \tCTwo\ntsGammaCAlpha} + \frac{\tCOne+\tCTwo N}{\onePlusTCTwo}. \notag
\end{equation}
Using \autoref{lm::Gratton} to bound $\ntAlphaUpper$, $\ntsAlphaLower \leq \ntsGammaCAlpha$, and $ \alphaMin = \alphaZero \gammaOne^\newL$ gives 
\begin{equation}
    N_T \leq 
    \squareBracket{1 - \frac{c}{(c+1)^2}}N
    + N_{TS, \underline{\alphaZero \gammaOne^{c+\newL}}}
    + \frac{\newL}{1+c}, \label{tmp:2021-12-31-2}
\end{equation}
which combined with \autoref{lem::Chernoff} and 
rearranged, gives the result. 
\end{proof}



\paragraph{The bound on true and successful iterations}

The next lemma bounds the total number of true and successful iterations with $\alphaK > \alphaZero \gammaOne^{c+\newL}$.

\begin{lemma} \label{lm::bound_T_S_with_artificial_alpha_low}
Let \autoref{AA4} and \autoref{AA5} hold.
Let $\epsilon>0$ and $N \leq \nEps$ be the total number of iterations.
 Then 
\begin{align}
 N_{TS, \underline{\alphaZero \gammaOne^{c+\newL}}} \leq \frac{f(x_0) - f^*}{
h(\epsilon, \alphaZero \gammaOne^{c+\newL})} \notag
\end{align}
where $f^*$ is defined in \eqref{eqn::fStar}, and $x_0$ is chosen at the start of \autoref{alg:generic}.
\end{lemma}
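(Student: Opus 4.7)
The plan is a standard telescoping argument that sums the per-iteration objective decrease on the iterations being counted. Let $\mathcal{I}$ denote the (random) set of indices $k \in \{0,1,\dots,N-1\}$ corresponding to iterations that are true and successful with $\alpha_k > \alpha_0\gamma_1^{c+\tau_\alpha}$, so that $|\mathcal{I}| = N_{TS,\underline{\alpha_0\gamma_1^{c+\tau_\alpha}}}$.

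First I would observe that, because $N \leq N_\epsilon$, every $k < N$ satisfies $\|\nabla f(x_k)\|_2 > \epsilon$, and in particular $k < N_\epsilon$, so for any $k \in \mathcal{I}$ the hypotheses of \autoref{AA4} apply. Hence for each $k \in \mathcal{I}$ we have
\begin{equation*}
f(x_k) - f(x_k + s_k) \geq h(\epsilon, \alpha_k) \geq h(\epsilon, \alpha_0\gamma_1^{c+\tau_\alpha}),
\end{equation*}
where the second inequality uses that $h(\epsilon,\cdot)$ is non-decreasing in its second argument together with $\alpha_k > \alpha_0\gamma_1^{c+\tau_\alpha}$. On successful iterations $x_{k+1} = x_k + s_k$, so the left-hand side equals $f(x_k) - f(x_{k+1})$.

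Next I would sum these bounds over $k \in \mathcal{I}$ and invoke \autoref{AA5}, which gives $f(x_k) \geq f(x_{k+1})$ on every iteration (successful or not), to telescope:
\begin{equation*}
N_{TS,\underline{\alpha_0\gamma_1^{c+\tau_\alpha}}} \, h(\epsilon, \alpha_0\gamma_1^{c+\tau_\alpha}) \leq \sum_{k \in \mathcal{I}} \bigl[f(x_k) - f(x_{k+1})\bigr] \leq \sum_{k=0}^{N-1}\bigl[f(x_k) - f(x_{k+1})\bigr] = f(x_0) - f(x_N) \leq f(x_0) - f^*,
\end{equation*}
where the last inequality uses the definition of $f^*$ in \eqref{eqn::fStar}. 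Rearranging yields the claim (noting that $h(\epsilon,\alpha_0\gamma_1^{c+\tau_\alpha}) > 0$ because $\epsilon > 0$ and $\alpha_0\gamma_1^{c+\tau_\alpha} > 0$ by the positivity clause in \autoref{AA4}).

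There is no real obstacle here; the only point to be careful about is confirming that AA5 applies to \emph{every} iteration (so that the omitted non-$\mathcal{I}$ summands are nonnegative and can be dropped), and that AA4's monotonicity in $\alpha$ is exactly what lets us replace the iteration-dependent lower bound $h(\epsilon,\alpha_k)$ by the uniform bound $h(\epsilon,\alpha_0\gamma_1^{c+\tau_\alpha})$.
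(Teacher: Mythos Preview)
Your proposal is correct and follows essentially the same approach as the paper: a telescoping sum over all iterations, using \autoref{AA5} to drop the non-$\mathcal{I}$ terms and \autoref{AA4} (together with the monotonicity of $h$ in its second argument) to lower-bound each retained summand by $h(\epsilon,\alpha_0\gamma_1^{c+\tau_\alpha})$, then dividing through by this positive quantity.
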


\begin{proof}
Using \autoref{AA5} and \autoref{AA4} respectively for the two inequalities below, we have
\begin{align}
    f(x_0) - f(x_{N})
    &= \sum_{k=0}^{N-1} f(\xK) - f(\xKPlusOne) \notag \\
    &\geq \sum_{\IterKTrueandSuccssfulWithAlphaKGeqAlphaMin} f(x_k) - f(\xKPlusOne)  \notag \\
    &\geq \sum_{\IterKTrueandSuccssfulWithAlphaKGeqAlphaMin} h(\epsilon, \alphaZero \gammaOne^{c+\newL}) \notag \\
    &= N_{TS, \underline{\alphaZero \gammaOne^{c+\newL}}} h(\epsilon, \alphaZero \gammaOne^{c+\newL}).\label{eqn::tmp1}
\end{align}
Noting that $f(x_{N}) \geq f^*$ and $h(\epsilon, \alphaZero \gammaOne^{c+\newL})>0$ 
by \autoref{AA4}, and rearranging \eqref{eqn::tmp1}, gives the required result.
\end{proof}

\paragraph{Proving the main result}

We are ready to prove \autoref{thm2} using \autoref{lem:AssOneTwo} and \autoref{lm::bound_T_S_with_artificial_alpha_low}.

\begin{proof}[Proof of \autoref{thm2}]
As $ \nEps \geq N $, \autoref{lm::bound_T_S_with_artificial_alpha_low} implies that
$$ \fZeroMinusfStarOverH
    \geq \ntsAlphaZeroGammaOneCL.$$
    This and  \eqref{eqn::n_upper_2} imply
   $$ N \geq \gDeltaSDeltaOne \squareBracket{
         \ntsAlphaZeroGammaOneCL
         + \frac{\newL}{1+c}}.$$
Therefore by \autoref{lem:AssOneTwo}, we have
$\probability{\nEps \geq N} \leq 
\probability{N \geq \gDeltaSDeltaOne \squareBracket{
         \ntsAlphaZeroGammaOneCL
         + \frac{\newL}{1+c}}}
         \leq \chernoffLowerExponential$.
\end{proof}

\section{Proof of some useful lemmas}
\label{BCGN:aux-app}

\paragraph{Proof of Lemma \ref{lem:GaussJLEmbedding}}
\begin{proof}
Since \eqref{eqn:tmp36} is invariant to the scaling of $y$ and is trivial for $y=0$, 
we may assume without loss of generality that $\normTwo{y}=1$. 
Let $R = \sqrt{l}S$, so that each entry of $R$ is distributed independently as $N(0,1)$. 
Then because the sum of independent Gaussian random variables is distributed as a Gaussian random variable; $\normTwo{y}=1$; and the fact that rows of $S$ are independent; we have that the entries of $Ry$, denoted by $z_i$ for $i\in [l]$, are independent $N(0,1)$ random variables. Therefore, for any $-\infty < q < \frac{1}{2}$, we have that
\begin{equation}
    \expectation{e^{q \normTwo{Ry}^2}} = \expectation{e^{q \sum_{i=1}^l z_i^2}} = \prod_{i=1}^l \expectation{e^{qz_i^2}} = (1-2q)^{-l/2}, \label{eqn:tmp21}
\end{equation}
where we used $\expectation{e^{qz_i^2}} = \frac{1}{1-2q}$ for $z_i \in N(0,1)$ and $-\infty < q < \frac{1}{2}$.
Hence, by Markov inequality, we have that, for $q <0$, 
\begin{equation}
    \probability{\normTwo{Ry}^2 \leq l (1-\epS)} = \probability{e^{q \normTwo{Ry}^2}\geq e^{ql(1-\epS)}} 
    \leq \frac{\expectation{\eToQRySqaured}}{\eToQLOneMinusEps} = (1-2q)^{-l/2} \eToMinusQLOneMinusEps, \label{eqn:tmp23}
\end{equation}
where the last inequality comes from \eqref{eqn:tmp21}.
Noting that 
\begin{equation}
    (1-2q)^{-l/2} \eToMinusQLOneMinusEps = \exp \squareBracket{-l \bracket{\frac{1}{2} \log(1-2q)+ q(1-\epS)}}, \label{eqn:tmp22}
\end{equation}
which is minimised at $q_0 = -\frac{\epS}{2(1-\epS)}<0$, we choose $q=q_0$ and 
the right hand side of \eqref{eqn:tmp23} becomes 
\begin{equation}
    e^{\frac{1}{2}l\squareBracket{\epS + \log(1-\epS)}} \leq e^{-\frac{1}{4}l\epS^2}, \label{eqn:tmp24}
\end{equation}
where we used $\log(1-x) \leq -x -x^2/2$, valid for all $x \in [0,1)$.
Hence we deduce
\begin{align}
      &\probability{\normTwo{Sy}^2 
    \leq (1-\epS) \normTwo{y}^2}
   = \probability{\normTwo{Sy}^2 \leq (1-\epS)} \texteq{by $\normTwo{y}=1$} \notag \\
    &= \probability{\normTwo{Ry}^2 \leq l(1-\epS)} \texteq{by $S = \frac{1}{\sqrt{l}}R$} \notag \\
    & \leq e^{-\frac{l\epS^2}{4}} \texteq{by \eqref{eqn:tmp24} and \eqref{eqn:tmp23}}. \notag
\end{align}
\end{proof}

\paragraph{Proof of Lemma \ref{lem:Taylor}}
\begin{proof}
The $L$-Lipschitz continuity properties of the gradient imply that
\begin{equation}
    \abs{f(x_k + S_k \sKHat) - \innerProduct{S_k\gradFK}{\sKHat}}
    \leq \frac{L}{2}\normTwo{S_k^T \sKHat}^2.
\end{equation}
The above equation and triangle inequality provide
\begin{align}
     \abs{f(x_k + s_k) - \mkHatSkHat} 
     = & \abs{f(x_k + s_k) - f(x_k) - \innerProduct{\sKGradFK}{\sKHat}
    - \frac{1}{2}\innerProduct{\SKTransposedsKHat}{B_k \SKTransposedsKHat}}
    \notag\\
    & \leq \bracket{\frac{L}{2} + \frac{1}{2}\normTwo{B_k}}
        \normTwo{S_k^T \sKHat}^2 
    \leq \frac{L + \BMax}{2} \normTwo{S_k^2 \sKHat},
\end{align}
where we used $\normTwo{B_k} \leq \BMax$ to derive the last inequality.
\end{proof}

{\small{\bibliography{2021_04_07_proper_ref.bib}}}

\begin{thebibliography}{10}

\bibitem{Birgin2}
V.~S. Amaral, R.~Andreani, E.~G. Birgin, D.~Marcondes, and J.~M. Mart{\'i}nez.
\newblock On complexity and convergence of high-order coordinate descent
  algorithms for smooth nonconvex box-constrained minimization.
\newblock {\em Journal of Global Optimization}, 84:527 -- 561, 2022.

\bibitem{subramanian}
K.~Balasubramanian and S.~Ghadimi.
\newblock Zeroth-order nonconvex stochastic optimization: Handling constraints,
  high dimensionality, and saddle points.
\newblock {\em Foundations of Computational Mathematics}, 22:35--–76, 2022.

\bibitem{MR3245880}
A.~S. Bandeira, K.~Scheinberg, and L.~N. Vicente.
\newblock Convergence of trust-region methods based on probabilistic models.
\newblock {\em SIAM J. Optim.}, 24(3):1238--1264, 2014.

\bibitem{Berahas:2019wr}
A.~S. {Berahas}, R.~{Bollapragada}, and J.~{Nocedal}.
\newblock {An Investigation of Newton-Sketch and Subsampled Newton Methods}.
\newblock {\em arXiv e-prints}, page arXiv:1705.06211, May 2017.

\bibitem{Birgin1}
E.~Birgin and J.~Martinez.
\newblock Block coordinate descent for smooth nonconvex constrained
  minimization.
\newblock {\em arXiv preprint arXiv:2111.13103}, 2021.

\bibitem{doi:10.1287/ijoo.2019.0016}
J.~Blanchet, C.~Cartis, M.~Menickelly, and K.~Scheinberg.
\newblock Convergence rate analysis of a stochastic trust-region method via
  supermartingales.
\newblock {\em INFORMS Journal on Optimization}, 1(2):92--119, 2019.

\bibitem{Curtis}
L.~Bottou, F.~E. Curtis, and J.~Nocedal.
\newblock Optimization methods for large-scale machine learning.
\newblock {\em SIAM Review}, 60(2):223--311, 2018.

\bibitem{2021arXiv210511815C}
C.~{Cartis}, J.~{Fiala}, and Z.~{Shao}.
\newblock {Hashing embeddings of optimal dimension, with applications to linear
  least squares}.
\newblock {\em arXiv e-prints}, page arXiv:2105.11815, May 2021.

\bibitem{zhen:icml_BCGN}
C.~Cartis, J.~Fowkes, and Z.~Shao.
\newblock A randomised subspace gauss-newton method for nonlinear
  least-squares.
\newblock In {\em Thirty-seventh International Conference on Machine Learning},
  2020.
\newblock In Workshop on Beyond First Order Methods in ML Systems; available on
  Arxiv 2211.05727.
  https://sites.google.com/view/optml-icml2020/accepted-papers.

\bibitem{adilet2}
C.~Cartis, E.~Massart, and A.~Otemissov.
\newblock Bound-constrained global optimization of functions with low effective
  dimensionality using multiple random embeddings.
\newblock {\em Mathematical Programming Series A}, 2022 (online).

\bibitem{adilet3}
C.~Cartis, E.~Massart, and A.~Otemissov.
\newblock Global optimization of functions using random embeddings.
\newblock {\em Mathematical Programming Series B (ISMP special issue)}, 2022
  (online).

\bibitem{adilet1}
C.~Cartis and A.~Otemissov.
\newblock A dimensionality reduction technique for unconstrained global
  optimization of functions with low effective dimensionality.
\newblock {\em Information and Inference: A Journal of the IMA},
  11(1):167--201, 2022.

\bibitem{Lindon22}
C.~Cartis and L.~Roberts.
\newblock Scalable subspace methods for derivative-free nonlinear least-squares
  optimization.
\newblock {\em Mathematical Programming (online)}, 2022.

\bibitem{Cartis:2017fa}
C.~Cartis and K.~Scheinberg.
\newblock Global convergence rate analysis of unconstrained optimization
  methods based on probabilistic models.
\newblock {\em Mathematical Programming}, 169(2):337--375, 2018.

\bibitem{CHEN2020105639}
L.~Chen, S.~Zhou, and J.~Ma.
\newblock Stable sparse subspace embedding for dimensionality reduction.
\newblock {\em Knowledge-Based Systems}, 195:105639, 2020.

\bibitem{MR3800867}
R.~Chen, M.~Menickelly, and K.~Scheinberg.
\newblock Stochastic optimization using a trust-region method and random
  models.
\newblock {\em Math. Program.}, 169(2, Ser. A):447--487, 2018.

\bibitem{MR57518}
H.~Chernoff.
\newblock A measure of asymptotic efficiency for tests of a hypothesis based on
  the sum of observations.
\newblock {\em Ann. Math. Statistics}, 23:493--507, 1952.

\bibitem{MR3773205}
M.~B. Cohen, T.~S. Jayram, and J.~Nelson.
\newblock Simple analyses of the sparse {J}ohnson-{L}indenstrauss transform.
\newblock In {\em 1st {S}ymposium on {S}implicity in {A}lgorithms}, volume~61
  of {\em OASIcs OpenAccess Ser. Inform.}, pages Art. No. 15, 9. Schloss
  Dagstuhl. Leibniz-Zent. Inform., Wadern, 2018.

\bibitem{conn2000trust}
A.~R. Conn, N.~I. Gould, and P.~L. Toint.
\newblock {\em Trust region methods}.
\newblock SIAM, 2000.

\bibitem{Facchinei}
A.~Daneshmand, F.~Facchinei, K.~V., and S.~G.
\newblock Hybrid random/deterministic parallel algorithms for convex and
  nonconvex big data optimization.
\newblock {\em IEEE Transactions on Signal Processing}, 63:3914--3929, 2015.

\bibitem{MR1943859}
S.~Dasgupta and A.~Gupta.
\newblock An elementary proof of a theorem of {J}ohnson and {L}indenstrauss.
\newblock {\em Random Structures Algorithms}, 22(1):60--65, 2003.

\bibitem{MR1863696}
K.~R. Davidson and S.~J. Szarek.
\newblock Local operator theory, random matrices and {B}anach spaces.
\newblock In {\em Handbook of the geometry of {B}anach spaces, {V}ol. {I}},
  pages 317--366. North-Holland, Amsterdam, 2001.

\bibitem{doikov2018randomized}
N.~Doikov and P.~Richt{\'a}rik.
\newblock Randomized block cubic {N}ewton method.
\newblock In {\em International Conference on Machine Learning}, pages
  1290--1298, 2018.

\bibitem{dolan2002benchmarking}
E.~D. Dolan and J.~J. Mor{\'e}.
\newblock Benchmarking optimization software with performance profiles.
\newblock {\em Mathematical programming}, 91(2):201--213, 2002.

\bibitem{ergen2019}
T.~Ergen, E.~Cand{\`e}s, and M.~Pilanci.
\newblock Random projections for learning non-convex models.
\newblock In {\em 33rd Conference on Neural Information Processing Systems},
  2019.

\bibitem{facchinei2015parallel}
F.~Facchinei, G.~Scutari, and S.~Sagratella.
\newblock Parallel selective algorithms for nonconvex big data optimization.
\newblock {\em IEEE Transactions on Signal Processing}, 63(7):1874--1889, 2015.

\bibitem{gould2015cutest}
N.~I. Gould, D.~Orban, and P.~L. Toint.
\newblock {CUTEst}: a constrained and unconstrained testing environment with
  safe threads for mathematical optimization.
\newblock {\em Computational Optimization and Applications}, 60(3):545--557,
  2015.

\bibitem{GLTR}
N.~I.~M. Gould, S.~Lucidi, M.~Roma, and P.~L. Toint.
\newblock Solving the trust-region subproblem using the lanczos method.
\newblock {\em SIAM Journal on Optimization}, 9(2):504--525, 1999.

\bibitem{gower2019rsn}
R.~Gower, D.~Koralev, F.~Lieder, and P.~Richt{\'a}rik.
\newblock {RSN}: Randomized subspace {N}ewton.
\newblock In {\em Advances in Neural Information Processing Systems}, pages
  614--623, 2019.

\bibitem{Gower:2016up}
R.~M. {Gower}, D.~{Goldfarb}, and P.~{Richt{\'a}rik}.
\newblock {Stochastic Block BFGS: Squeezing More Curvature out of Data}.
\newblock {\em arXiv e-prints}, page arXiv:1603.09649, Mar. 2016.

\bibitem{Gower2020}
R.~M. Gower, P.~Richt{\'{a}}rik, and F.~Bach.
\newblock Stochastic quasi-gradient methods: variance reduction via {J}acobian
  sketching.
\newblock {\em Mathematical Programming}, 2020.

\bibitem{Gratton:2017kz}
S.~Gratton, C.~W. Royer, L.~N. Vicente, and Z.~Zhang.
\newblock Complexity and global rates of trust-region methods based on
  probabilistic models.
\newblock {\em IMA Journal of Numerical Analysis}, 38(3):1579--1597, 2018.

\bibitem{Grishchenko2021}
D.~Grishchenko, F.~Iutzeler, and J.~Malick.
\newblock Proximal gradient methods with adaptive subspace sampling.
\newblock {\em Mathematics of Operations Research}, 2021.

\bibitem{Gupta1999}
A.~Gupta and D.~Nagar.
\newblock {\em Matrix Variate Distributions}.
\newblock New York: Chapman and Hall/CRC, 2000.

\bibitem{Johnson:1984aa}
W.~B. Johnson and J.~Lindenstrauss.
\newblock Extensions of {L}ipschitz mappings into a {H}ilbert space.
\newblock In {\em Conference in modern analysis and probability ({N}ew {H}aven,
  {C}onn., 1982)}, volume~26 of {\em Contemp. Math.}, pages 189--206. Amer.
  Math. Soc., Providence, RI, 1984.

\bibitem{kahale2020leastsquares}
N.~{Kahale}.
\newblock {Least-squares regressions via randomized Hessians}.
\newblock {\em arXiv e-prints}, page arXiv:2006.01017, June 2020.

\bibitem{MR3167920}
D.~M. Kane and J.~Nelson.
\newblock Sparser {J}ohnson-{L}indenstrauss transforms.
\newblock {\em J. ACM}, 61(1):Art. 4, 23, 2014.

\bibitem{10.5555/3305381.3305577}
J.~M. Kohler and A.~Lucchi.
\newblock Sub-sampled cubic regularization for non-convex optimization.
\newblock In {\em Proceedings of the 34th International Conference on Machine
  Learning - Volume 70}, ICML'17, page 1895–1904. JMLR.org, 2017.

\bibitem{kozak2019stochastic}
D.~Kozak, S.~Becker, A.~Doostan, and L.~Tenorio.
\newblock Stochastic subspace descent.
\newblock {\em arXiv preprint arXiv:1904.01145}, 2019.

\bibitem{Kozak_published}
D.~Kozak, S.~Becker, A.~Doostan, and L.~Tenorio.
\newblock A stochastic subspace approach to gradient-free optimization in high
  dimensions.
\newblock {\em Computational Optimization and Applications}, 79(2):339--368,
  2021.

\bibitem{lacotte2019faster}
J.~{Lacotte} and M.~{Pilanci}.
\newblock {Faster Least Squares Optimization}.
\newblock {\em arXiv e-prints}, page arXiv:1911.02675, Nov. 2019.

\bibitem{lacotte2020optimal}
J.~{Lacotte} and M.~{Pilanci}.
\newblock {Optimal Randomized First-Order Methods for Least-Squares Problems}.
\newblock {\em arXiv e-prints}, page arXiv:2002.09488, Feb. 2020.

\bibitem{lu2017randomized}
Z.~Lu and L.~Xiao.
\newblock A randomized nonmonotone block proximal gradient method for a class
  of structured nonlinear programming.
\newblock {\em SIAM Journal on Numerical Analysis}, 55(6):2930--2955, 2017.

\bibitem{luo2016efficient}
H.~Luo, A.~Agarwal, N.~Cesa-Bianchi, and J.~Langford.
\newblock Efficient second order online learning by sketching.
\newblock In {\em Advances in Neural Information Processing Systems}, pages
  902--910, 2016.

\bibitem{10.1561/2200000035}
M.~W. Mahoney.
\newblock Randomized algorithms for matrices and data.
\newblock {\em Found. Trends Mach. Learn.}, 3(2):123--224, Feb. 2011.

\bibitem{Nesterov12}
Y.~Nesterov.
\newblock Efficiency of coordinate descent methods on huge-scale optimization
  problems.
\newblock {\em {SIAM} Journal on Optimization}, 22(2):341--362, 2012.

\bibitem{Nocedal:2006uv}
J.~Nocedal and S.~J. Wright.
\newblock {\em Numerical optimization}.
\newblock Springer Series in Operations Research and Financial Engineering.
  Springer, New York, second edition, 2006.

\bibitem{Patrascu2015}
A.~Patrascu and I.~Necoara.
\newblock Efficient random coordinate descent algorithms for large-scale
  structured nonconvex optimization.
\newblock {\em Journal of Global Optimization}, 61:19--46, 2015.

\bibitem{pilanci2017newton}
M.~Pilanci and M.~J. Wainwright.
\newblock Newton sketch: A near linear-time optimization algorithm with
  linear-quadratic convergence.
\newblock {\em SIAM Journal on Optimization}, 27(1):205--245, 2017.

\bibitem{Powell1973}
M.~J.~D. Powell.
\newblock On search directions for minimization algorithms.
\newblock {\em Mathematical Programming}, 4(1):193--201, 1973.

\bibitem{qu2016sdna}
Z.~Qu, P.~Richt{\'a}rik, M.~Tak{\'a}c, and O.~Fercoq.
\newblock {SDNA}: stochastic dual {N}ewton ascent for empirical risk
  minimization.
\newblock In {\em International Conference on Machine Learning}, pages
  1823--1832, 2016.

\bibitem{Raza2013}
M.~Razaviyayn, M.~Hong, and Z.-Q. Luo.
\newblock A unified convergence analysis of block successive minimization
  methods for nonsmooth optimization.
\newblock {\em SIAM Journal on Optimization}, 23(2):1126--1153, 2013.

\bibitem{richtarik2014iteration}
P.~Richt{\'a}rik and M.~Tak{\'a}{\v{c}}.
\newblock Iteration complexity of randomized block-coordinate descent methods
  for minimizing a composite function.
\newblock {\em Mathematical Programming}, 144(1-2):1--38, 2014.

\bibitem{Roosta-Khorasani2019}
F.~Roosta-Khorasani and M.~W. Mahoney.
\newblock Sub-sampled {N}ewton methods.
\newblock {\em Mathematical Programming}, 174(1-2):293--326, 2019.

\bibitem{Zhen-PhD}
Z.~Shao.
\newblock {\em On Random Embeddings and Their Application to Optimisation}.
\newblock PhD thesis, Mathematical Institute, University of Oxford, 2022.

\bibitem{tett2017calibrating}
S.~F. Tett, K.~Yamazaki, M.~J. Mineter, C.~Cartis, and N.~Eizenberg.
\newblock Calibrating climate models using inverse methods: case studies with
  {HadAM3, HadAM3P and HadCM3}.
\newblock {\em Geoscientific Model Development}, 10:3567–3589, 2017.

\bibitem{MR2946459}
J.~A. Tropp.
\newblock User-friendly tail bounds for sums of random matrices.
\newblock {\em Found. Comput. Math.}, 12(4):389--434, 2012.

\bibitem{MR3837109}
R.~Vershynin.
\newblock {\em High-dimensional probability}, volume~47 of {\em Cambridge
  Series in Statistical and Probabilistic Mathematics}.
\newblock Cambridge University Press, Cambridge, 2018.
\newblock An introduction with applications in data science, With a foreword by
  Sara van de Geer.

\bibitem{2019arXiv190403342W}
X.~{Wang} and Y.-x. {Yuan}.
\newblock {Stochastic Trust Region Methods with Trust Region Radius Depending
  on Probabilistic Models}.
\newblock {\em arXiv e-prints}, page arXiv:1904.03342, Apr. 2019.

\bibitem{10.1561/0400000060}
D.~P. Woodruff.
\newblock Sketching as a tool for numerical linear algebra.
\newblock {\em Found. Trends Theor. Comput. Sci.}, 10(1-2):1--157, 2014.

\bibitem{Wright2015}
S.~J. Wright.
\newblock Coordinate descent algorithms.
\newblock {\em Mathematical Programming}, 151:3--–34, 2015.

\bibitem{MR4163540}
P.~Xu, F.~Roosta, and M.~W. Mahoney.
\newblock Newton-type methods for non-convex optimization under inexact
  {H}essian information.
\newblock {\em Math. Program.}, 184(1-2, Ser. A):35--70, 2020.

\bibitem{Yang2020}
Y.~Yang, M.~Pesavento, Z.-Q. Luo, and B.~Ottersten.
\newblock Inexact block coordinate descent algorithms for nonsmooth nonconvex
  optimization.
\newblock {\em IEEE Transactions on Signal Processing}, 68:947--961, 2020.

\bibitem{doi:10.1287/ijoo.2019.0043}
Z.~Yao, P.~Xu, F.~Roosta, and M.~W. Mahoney.
\newblock Inexact nonconvex newton-type methods.
\newblock {\em INFORMS Journal on Optimization}, 3(2):154--182, 2021.

\bibitem{Gower2022}
R.~Yuan, A.~Lazaric, and R.~M. Gower.
\newblock Sketched newton--raphson.
\newblock {\em SIAM Journal on Optimization}, 32(3):1555--1583, 2022.

\bibitem{Yuan}
Y.~Yuan.
\newblock Subspace methods for large scale nonlinear equations and nonlinear
  least squares.
\newblock {\em Optimization and Engineering}, 10:207–218, 2009.

\end{thebibliography}
\bibliographystyle{abbrv}

\end{document}